\theoremstyle{plain}
\newtheorem{thm}{Theorem}[section]
\newtheorem{Cor}{Corollaire}[section]
\newtheorem{Def}{Definition}[section]
\newtheorem{rem}{Remark}[section]
\newtheorem{prop}{Proposition}[section]
\newtheorem{Lem}{Lemma}[section]
\begin{document}
	
	
	\title[]{\bf On the existence and regularity of an optimal shape for the non-linear first eigenvalue problem with Dirichlet condition}
	\author[R. M. Gouton]{Rocard Michel GOUTON$^1$}
	\author[A. Marcos]{Aboubacar MARCOS$^1$}
	\author[D. Seck]{Diaraf SECK$^2$}
	\address[1]{Institut de Math\'ematiques et de Sciences Physiques, Porto Novo, Benin }
	\address[2]{Universit\'e CHeikh Anta Diop de Dakar}
	\maketitle	\begin{abstract}
		We study a shape optimization problem associated with the first eigenvalue of a nonlinear spectral problem involving a mixed operator ($p-$Laplacian and Laplacian) with a constraint on the volume. First, we prove the existence of a quasi-open $\Omega^*\subset D$ minimizer of the first eigenvalue under a volume constraint. Next, the local continuity of the eigenfunction associated with the eigenvalue on $\Omega^*$ is proved. This allows us to conclude that $\Omega^*$ is open when $D$ is connected. This is an important first step for regularizing the optimal shape themselves. Finally, there is a proof that the reduced boundary of the optimal shape is regular.\\
		\textbf{Keywords}: $p-$Laplacian, Laplacian,  eigenvalues, the existence, the reduced boundary, the optimal shape, regular. \\
	\end{abstract}

	\section{Introduction}
	Let $\Omega$ be an open subset of a fixed ball $D$ in $\mathbb{R}^{d} $ $d\geq2$. We consider the following nonlinear eigenvalue problem
	\begin{equation}\label{valeurchar2}
		\left
		\{
		\begin{array}{c r}
			
			-\Delta_p u -\Delta 
			u   = \lambda |u|^{q-2}u & \mbox{dans} \ \Omega \\
			u  =  0  & \mbox{sur}  \ \partial\Omega
		\end{array}
		\right.
	\end{equation}
	where $\Delta_p u  :=div(\arrowvert \nabla u\arrowvert ^{p-2}\nabla u)$ is the $p-$Laplace operator and $p> q\geq2$.
	We say that $\lambda\in \mathbb{R}$ is an eigenvalue of the problem (\ref{valeurchar2}) if there exists $u_\lambda\in W_0^{1,p}(\Omega)\smallsetminus \{ 0\} $ such that
	\begin{equation}\label{vari12char4}
		\int_{\Omega}(\arrowvert \nabla u\arrowvert ^{p-2}+1)\nabla u_\lambda \nabla\varphi dx-\lambda\int_{\Omega} |u_\lambda|^{p-2}u_\lambda\varphi dx=0 \ \ \ \forall \varphi \in W_0^{1,p}(\Omega)\smallsetminus \{ 0\}.
	\end{equation} 
	Such a function $u_{\lambda}$ will be called an eigenfunction corresponding to the eigenvalue $\lambda$.
	Contrary to the Laplace operator, it is proved in \cite{Rocard1} that the spectrum is continuous. We denote by $\lambda_{1}(\Omega)$ the first eigenvalue of the problem (\ref{valeurchar2}).
	We are interested in optimization problems of the shape of the minimization of the eigenvalue $\lambda_1(\Omega)$ under different types of constraints, i.e. the following minimization problems	
	\begin{equation}\label{minpro1}
		\min\{\lambda_{1}(\Omega),\Omega\subset D ,\ \Omega \  \mbox{quasi-open }, |\Omega|=c \}
	\end{equation}
	\begin{equation}\label{}
		\min\{\lambda_{1}(\Omega), \ \Omega\subset D, |\Omega|=c \}
	\end{equation}
	and
	\begin{equation}\label{minpro2}
		\min\{ \lambda_{1}(\Omega),\Omega\in \mathcal{A}, |\Omega|\leq c\}
	\end{equation}
	where $\mathcal{A}$ is a family of admissible domain defined by
	\begin{equation*}
		\mathcal{A}=\{ \Omega\subseteq D, \Omega\ \mbox{is quasi-open} \}	.
	\end{equation*}
	Note that in a previous work we have shown the isoperimetric inequality of the Rayleigh-Faber Khran type, i.e. the ball minimizes the eigenvalue $\lambda_1(\Omega)$. In other words, we have 	\begin{equation*}
		\lambda_{1}(B)=\min\{\lambda_{1}(\Omega),\Omega \ \mbox{open set of} \ \mathbb{R}^d, |\Omega|=c \}	
	\end{equation*}
	where $B$ is a ball of $\mathbb{R}^d$ of fixed volume.\\
	Various questions have been studied for these types of problems. The existence of the solution has been studied for the first values of the Laplacian and for the $p-$Laplacian (cf. \cite{IDRISSA,Henrot2017}). As for the question of the regularity of the optimal shape, it has been studied only in the case of the Laplacian. The study of the regularity of the optimal shape of these types of shape optimization problems started in 1981 with the paper of L.A. Caffarelli et al. \cite{Caffarelli1}, where they show that the optimal shape is an open one and the singular part of its boundary is of null Haussdorf measure. Later, in 1996, B.Gustasson et al. \cite{Gustafsson11} used the same techniques to show the same kind of results for the optimal solution of another shape optimization problem. Subsequently, several other related works were published \cite{Briancon,Briancon189,Briancon3}. More recently, in 2017, the book by Antoine Henrot (ed.) \cite{Henrot2017} reviews various issues on shape optimization problems of type (\ref{minpro1}) and (\ref{minpro2}).\\
	The goal of this article is first to study the existence of a solution to the problem (\ref{minpro2}). In a second step, we study the regularity of the optimal solution. The study of regularity is divided into two parts, in the first we study the Lipschitz continuity results for the solution of the state and in the second we study the regularity of the boundary of the optimal solution. This work is an attempt to extend the results obtained by T. Briançon, M. Hayouni, M. Pierre and J. Lamboley in \cite{Briancon,Briancon3,Briancon189,Henrot2017}.\\
	The rest of the paper is organized as follows. In section 2, we give the definition of the eigenvalue $\lambda_1(\Omega)$ and recall some properties of its associated eigenfunctions. In the section 3, we study the properties of geometric variations for the first eigenvalue. In section 4, we show the existence of a solution for the problem (\ref{minpro2}). In Section 5, we study the Lipschitz continuity results for the solution of the state. Finally in Section 6, we show the regularity of the boundary of the optimal shape.
	\section{Continuity result}
Let $\Omega \subset	\mathbb{R}^d, d \geq2$ an open bounded set, the first eigenvalue of the operator $\Delta_p + \Delta$ with Dirichlet condition is defined by the nonlinear Rayleigh quotient
	\begin{equation}
		\lambda_{1}(\Omega):=\inf_{u\in W_0^{1,p}(\Omega)\smallsetminus \{ 0\}}\dfrac{\frac{1}{p}\int_{\Omega}\arrowvert \nabla u\arrowvert ^{p}dx+\frac{1}{2}\int_{\Omega}\arrowvert \nabla u\arrowvert ^{2}dx}{\frac{1}{q}\int_{\Omega}|u|^{q}}.
	\end{equation}
	The first eigenvalue is strictly positive. Let $u_{\Omega}$ be a positive eigenfunction associated with $\lambda_1(\Omega)$ , we have $u_{\Omega}\in W_0^{1,p}(\Omega)\cap L^{\infty}(\Omega)$ see \cite{Rocard1}.
	In this section we are interested in the continuity of the map
	\begin{equation*}
		\Omega\longmapsto \lambda_1(\Omega).
	\end{equation*}
	Then, we have to fix topology on the space of the open subsets of $D$ where $D \subset \mathbb{R}^d$ is a bounded set . On the family of the open
	subsets of $D$, we define the Hausdorff complementary topology, denoted $H^c$ given by the metric
	\begin{equation*}
		d_{H^c}(\Omega_1,\Omega_2)=\sup_{x\in\mathbb{R}^d}|d(x,\Omega_1^c)-d(x,\Omega_2^c)|.	
	\end{equation*}
	The $H^c-$topology has some good properties for example the space of the open subsets of $ D$ is
	compact. Moreover if $\Omega_n  \ \xrightarrow{H^c} \ \Omega$, then for any compact  $K\subset\subset \Omega$  we have $K\subset\subset\Omega_n$ for $n$ large
	enough.\\
	However, perturbations in this topology may be very irregular and in general situations the
	continuity of the mapping $ \Omega\longmapsto \lambda_1(\Omega)$ fails.\\
	In order to obtain a compactness result we impose some additional constraints on the space
	of the open subsets of $D$ which are expressed in terms of the Sobolev capacity. There are many
	ways to define the Sobolev capacity, we use the local capacity defined in the following way.
	
	\begin{Def}
		For a compact set $K$ contained in a ball $D$,
		\begin{equation*}
			cap(K,D):=\inf \left\lbrace \int_{D}\arrowvert \nabla \varphi\arrowvert ^{p} dx \ \ \  \varphi \in C^\infty_c(B)  \ and \  \varphi\geq 1 on  \ K\ \right\rbrace.
		\end{equation*}
	\end{Def}
	
	\begin{Def}
		\begin{enumerate}
			\item It is said that a property holds $p-$quasi everywhere (abbreviated as $p- q.e$) if it holds
			outside a set of $p-$capacity zero.
			\item A set $\Omega\subset \mathbb{R}^{N}$ is said to be quasi open if for every $\epsilon> 0$ there exists an open set $\Omega_\epsilon$  such that $\Omega\subseteq \Omega_\epsilon$ and $cap(\Omega_\epsilon\backslash \Omega) <\epsilon.$ 
			\item A function $u:\mathbb{R}^d\longrightarrow\mathbb{R}$ is said $p-$quasi continuous if for every $\epsilon>0$ here exists an
			open set $\Omega_\epsilon$ such that $cap(\Omega_\epsilon) <\epsilon$ and $u_{|\mathbb{R}^d \backslash \Omega_\epsilon}$ is continuous in $\mathbb{R}^d\backslash\Omega_\epsilon$
		\end{enumerate}
	\end{Def}
	It is well known that every Sobolev function $u\in W^{1,p}(\mathbb{R}^d)$ has a p-quasi continuous representative which we still denote $u$. Therefore, level sets of Sobolev functions are $p-$quasi open
	sets; in particular $\Omega_u=\{x\in D; |u(x)|>0\}$ is quasi open subsets of $D$.
	
	\begin{Def}
		We say that an open set $\Omega$ has the $p-(r,c)$ capacity density condition if
		\begin{equation*}
			\forall x\in \partial\Omega, \ \ \forall 0<\delta<r, \ \dfrac{cap(\Omega^c\cap\bar{B}(x,\delta),B(x,2\delta))}{cap(\bar{B}(x,\delta),B(x,2\delta))} \geq c
		\end{equation*}
		where $B(x,\delta)$ denotes the ball of raduis $\delta$, centred at $x$.
	\end{Def}
	\begin{Def}
		We say that the sequence of the spaces $W_0^{1,p}(\Omega_n)$ converges in the sense of
		Mosco to the space  $W_0^{1,p}(\Omega_n)$ if the following conditions hold
		\begin{enumerate}
			\item[(1)] The first Mosco condition: For all $\phi \in W_0^{1,p} (\Omega)$ there exists a sequence $\phi_n \in W_0^{1,p}(\Omega_n)$ such that $\phi_n$ converges strongly in $W_0^{1,p}(D)$ to $\phi$.
			\item[(2)]	The second Mosco condition: For every sequence $\phi_{nk} \in W_0^{1,p}(\Omega_{nk})$ weakly convergent
			in $W_0^{1,p}(D)$ to a function $\phi$, we have $\phi \in W_0^{1,p}(\Omega).$
		\end{enumerate}
	\end{Def}
	\begin{Def}
		We say a sequence $(\Omega_n)$ of open subsets of a fixed ball $D$ $\gamma_p$-converges to $\Omega$ if
		for any $f \in W^{-1,q}(\Omega)$ the solutions of the Dirichlet problem
		\begin{equation*}
			-\Delta_p u_n-\Delta u_n=f \ dans \ \Omega_n, u_n\in W_0^{1,p}(\Omega_n)
		\end{equation*}
		converge strongly in $W_0^{1,p}(D)$, as $n\longrightarrow +\infty$ to the solution of the corresponding problem in $\Omega$, see \cite{GDAL1},\cite{GDAL2}.
	\end{Def}
	
	By $\mathcal{O}_{p-(r,c)}(D)$,we denote the family of all open subsets of $D$ which satisfy the  
	$p-(r,c)$ capacity density condition.This family is compact in the $H^c$ topology see \cite{GDAL4}. In \cite{GDAL5}, D. Bucur and
	P. Trebeschi, using capacity constraints analogous to those introduce in \cite{GDAL3} and \cite{GDAL4} for the
	linear case, prove the $\gamma_p-$compactness result for the $p-$Laplacian. In the same way, they extend
	the continuity result of Šveràk \cite{GDAL6} to the p-Laplacian for $p\in (d-1,d], d\geq3 .$  The reason
	of the choice of $p$ is that in $\mathbb{R}^d$ the curves have $p$ positive capacity if $p>d-1 .$ The case $p > d$ is trivial since all functions in
	$W^{1,p}(\mathbb{R}^d)$ are continuous.
	Let us denote by 
	$$\mathcal{O}_l(D)=\{\Omega\subseteq D,\sharp\Omega^c\leq l \}$$
	where $\sharp$ denotes the number of the connected components and $l$ a given integer. We have the following theorem.
	\begin{thm}[Bucur-Trebeschi]\label{Bucur}
		Let $d\geq p>d-1.$  Let $\Omega_n$ a sequence
		of open subsets of $D$ such that $(\Omega_n)\subseteq \mathcal{O}_l(D)$ and assume that $\Omega_n$ converges in the Hausdorff complementary topology to $\Omega$. Then $\Omega \in \mathcal{O}_l(D)$ and $\Omega_n$ $\gamma_p-$converges to $\Omega.$
	\end{thm}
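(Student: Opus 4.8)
\textbf{Step 1 ($\Omega\in\mathcal{O}_l(D)$).} I would split the statement into its two assertions. For the first, passing to a subsequence I may assume that $\Omega_n^c$ has a fixed number $l'\leq l$ of connected components $C_n^1,\dots,C_n^{l'}$, each compact in $\overline D$. Since the hyperspace of compact subsets of the compact set $\overline D$ is compact for the Hausdorff metric, after a further extraction each sequence $(C_n^i)_n$ converges in Hausdorff distance to a compact set $C^i$, which is connected because a Hausdorff limit of connected sets is connected. As finite unions pass to the Hausdorff limit, $\Omega_n^c=\bigcup_i C_n^i\to\bigcup_i C^i$, and by uniqueness of the $H^c$-limit $\Omega^c=\bigcup_{i=1}^{l'}C^i$; hence $\sharp\,\Omega^c\leq l'\leq l$.

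\textbf{Step 2 (first Mosco condition).} This is the soft direction. Given $\phi\in W_0^{1,p}(\Omega)$ I would approximate it in $W^{1,p}$-norm by some $\psi\in C_c^\infty(\Omega)$; since $\mathrm{supp}\,\psi\subset\subset\Omega$, the $H^c$-convergence gives $\mathrm{supp}\,\psi\subset\subset\Omega_n$ for $n$ large, so $\psi\in W_0^{1,p}(\Omega_n)$ for such $n$, and a diagonal extraction then produces a recovery sequence $\phi_n\in W_0^{1,p}(\Omega_n)$ with $\phi_n\to\phi$ strongly in $W_0^{1,p}(D)$.

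\textbf{Step 3 (second Mosco condition --- the main obstacle).} Relabelling the subsequence, let $\phi_n\in W_0^{1,p}(\Omega_n)$ with $\phi_n\rightharpoonup\phi$ weakly in $W_0^{1,p}(D)$; I would need to show $\phi=0$ $p$-q.e. on $\Omega^c=\bigcup_{i=1}^{l'}C^i$. Fix a component $C^i$ and its approximants $C_n^i\to C^i$. If $C^i$ is a single point its $p$-capacity vanishes (since $p\leq d$) and there is nothing to prove; if $C^i$ has at least two points, then being connected it has positive diameter, so for each $x\in C^i$ and each small $r$ the connected sets $C_n^i\cap\overline B(x,r)$ have diameter bounded below by a fixed multiple of $r$ for $n$ large, hence $p$-capacity in $B(x,2r)$ bounded below by $c_0\,r^{d-p}$ --- a bound available \emph{precisely because $p>d-1$}, which is the Šverák-type mechanism that breaks down for $p\leq d-1$. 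Since $\phi_n$ vanishes on $C_n^i$, a capacitary Poincaré inequality bounds the mean of $|\phi_n|$ over $B(x,2r)$ by a constant times $r$ times the $L^p(B(x,2r))$-norm of $\nabla\phi_n$; using the uniform bound on $\|\phi_n\|_{W^{1,p}(D)}$, letting $n\to\infty$ by lower semicontinuity of the Dirichlet integral and then $r\to0$ forces $\phi=0$ $p$-q.e. on $C^i$. Summing over the finitely many components gives $\phi\in W_0^{1,p}(\Omega)$. I expect this to be the hard step; it is here that both $d-1<p\leq d$ and $\sharp\,\Omega_n^c\leq l$ are indispensable, the bound on the number of components being what prevents the complements from dissolving into capacity-null dust.

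\textbf{Step 4 ($\gamma_p$-convergence).} Finally I would deduce $\gamma_p$-convergence from the Mosco convergence of the spaces in the standard way: for $f$ in the relevant dual space the solutions $u_n$ of $-\Delta_p u_n-\Delta u_n=f$ in $\Omega_n$ are uniformly bounded in $W_0^{1,p}(D)$ by the energy estimate (the term $-\Delta u$ being subordinate on the bounded set $D$, as $p\geq2$), so $u_n\rightharpoonup u$ along a subsequence; Step~3 gives $u\in W_0^{1,p}(\Omega)$; then, testing the equation for $u_n$ against $u_n$ minus a recovery sequence for $u$ from Step~2 and invoking the monotonicity --- indeed the $(S_+)$ property --- of the operator $-\Delta_p-\Delta$, I would upgrade weak to strong convergence $u_n\to u$ in $W_0^{1,p}(D)$ and identify $u$ as the solution of the limit Dirichlet problem in $\Omega$. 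Uniqueness of the solution removes the subsequence, completing the proof.
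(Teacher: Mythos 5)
First, note that the paper does not actually prove Theorem \ref{Bucur}: its ``proof'' is the single line ``voir \cite{GDAL5}''. So your sketch is not an alternative to an argument in the paper but an attempt to reconstruct the cited Bucur--Trebeschi proof. Structurally you reconstruct it correctly: the decomposition of $\Omega_n^c$ into at most $l$ connected components converging in Hausdorff distance, the dichotomy between components degenerating to points (which are $p$-null since $p\leq d$) and components of persistent diameter, the lower bound $\mathrm{cap}(C_n^i\cap \bar B(x,r),B(x,2r))\geq c_0 r^{d-p}$ for connected sets of diameter $\gtrsim r$ valid precisely for $p>d-1$, and the passage from Mosco convergence to $\gamma_p$-convergence via monotonicity/$(S_+)$ (the last step being the only place where you must, and do, adapt the original $p$-Laplacian statement to the mixed operator $-\Delta_p-\Delta$; this works since $p\geq 2$ and $D$ is bounded).

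The one genuine gap is the final deduction in Step 3. The capacitary Poincar\'e inequality you invoke gives, for $\phi_n$ vanishing q.e.\ on $C_n^i\cap\bar B(x,r)$, an estimate of the form $\fint_{B(x,2r)}|\phi_n|^p\leq C\,r^p\,\mathrm{cap}(C_n^i\cap\bar B(x,r),B(x,2r))^{-1}r^{d-p}\fint_{B(x,2r)}|\nabla\phi_n|^p\leq C r^p\fint_{B(x,2r)}|\nabla\phi_n|^p$. Passing to the limit in $n$ and using only the global bound $\|\nabla\phi_n\|_{L^p(D)}\leq M$ yields $\fint_{B(x,2r)}|\phi|^p\leq C M r^{p-d}$, which blows up as $r\to 0$ when $p<d$ and therefore does not force the precise representative of $\phi$ to vanish at $x$. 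To close this step you need the genuinely harder ingredient of Bucur--Zol\'esio/Bucur--Trebeschi: a uniform Wiener-type (capacity-density) estimate for the $C_n^i$ near $x$, iterated over dyadic annuli, which yields a decay $\fint_{B(x,r)}|\phi_n|\leq \omega(r)$ with $\omega$ independent of $n$ and $\omega(r)\to 0$; only then does $n\to\infty$ followed by $r\to 0$ give $\phi=0$ $p$-q.e.\ on $C^i$. A secondary, cosmetic point: one component of $\Omega_n^c$ contains $\mathbb{R}^d\setminus D$ and is not compact, so in Step 1 you should work with $\Omega_n^c\cap \bar B$ for a large ball $B\supset\supset D$ (or in the one-point compactification) before applying Blaschke-type compactness.
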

	\begin{proof}[Preuve du Théorème \ref{Bucur}]
		voir \cite{GDAL5}
	\end{proof}
	
	For $d = 2$ and $ p = 2$,theorem \ref{Bucur} becomes the continuity result of  Šveràk \cite{GDAL6}.
	Back to the continuity result, we use the above results to prove the following theorem.	
	\begin{thm}\label{conver}
		Let $p>d-1.$ Consider the sequence $(\Omega_n)\subseteq \mathcal{O}_l(D).$ Assume that $\Omega_n$ converges in Hausdorff
		complementary topology to $\Omega$. Then $\lambda_{1}(\Omega_n)$ converges to $\lambda_{1}(\Omega).$
		
	\end{thm}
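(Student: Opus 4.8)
The plan is to deduce the result from Theorem~\ref{Bucur} together with a $\Gamma$-convergence argument for the Rayleigh quotients. Since $(\Omega_n)\subseteq\mathcal{O}_l(D)$ and $\Omega_n\xrightarrow{H^c}\Omega$, Theorem~\ref{Bucur} gives $\Omega\in\mathcal{O}_l(D)$ and that $\Omega_n$ $\gamma_p$-converges to $\Omega$. I will use the well known fact that $\gamma_p$-convergence is equivalent to the Mosco convergence of the spaces $W_0^{1,p}(\Omega_n)$ towards $W_0^{1,p}(\Omega)$, and exploit both Mosco conditions. Setting
\[
 J_\Omega(v):=\frac1p\int_\Omega|\nabla v|^p\,dx+\frac12\int_\Omega|\nabla v|^2\,dx,\qquad
 R_\Omega(v):=\frac{J_\Omega(v)}{\frac1q\int_\Omega|v|^q\,dx},
\]
it is enough to establish the two one-sided estimates $\limsup_n\lambda_1(\Omega_n)\le\lambda_1(\Omega)$ and $\liminf_n\lambda_1(\Omega_n)\ge\lambda_1(\Omega)$.

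For the first estimate, I would take a positive eigenfunction $u\in W_0^{1,p}(\Omega)\setminus\{0\}$ for $\lambda_1(\Omega)$, so that $R_\Omega(u)=\lambda_1(\Omega)$. By the first Mosco condition there is a sequence $\phi_n\in W_0^{1,p}(\Omega_n)$ with $\phi_n\to u$ strongly in $W_0^{1,p}(D)$; then $\int_D|\nabla\phi_n|^p\to\int_D|\nabla u|^p$, $\int_D|\nabla\phi_n|^2\to\int_D|\nabla u|^2$, and, using the compact embedding $W_0^{1,p}(D)\hookrightarrow L^q(D)$ (valid because $q<p\le p^{*}$), $\int_D|\phi_n|^q\to\int_D|u|^q\neq0$. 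Hence $\phi_n\neq0$ for $n$ large and $\lambda_1(\Omega_n)\le R_{\Omega_n}(\phi_n)\to R_\Omega(u)=\lambda_1(\Omega)$, which yields $\limsup_n\lambda_1(\Omega_n)\le\lambda_1(\Omega)$.

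For the second estimate, I would fix for each $n$ a positive eigenfunction $u_n\in W_0^{1,p}(\Omega_n)$ normalised by $\frac1q\int_{\Omega_n}|u_n|^q\,dx=1$, so that $J_{\Omega_n}(u_n)=R_{\Omega_n}(u_n)=\lambda_1(\Omega_n)$. By the estimate just obtained $(\lambda_1(\Omega_n))$ is bounded, hence $(u_n)$ is bounded in $W_0^{1,p}(D)$ by Poincaré's inequality, and, up to a subsequence, $u_n\rightharpoonup u$ weakly in $W_0^{1,p}(D)$. The second Mosco condition gives $u\in W_0^{1,p}(\Omega)$, while the compact embedding into $L^q(D)$ yields $u_n\to u$ in $L^q(D)$, so $\frac1q\int_\Omega|u|^q\,dx=1$ and in particular $u\neq0$. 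Using the weak lower semicontinuity on $W_0^{1,p}(D)$ of the convex functionals $v\mapsto\int|\nabla v|^p$ and $v\mapsto\int|\nabla v|^2$ (the latter being strongly continuous on $W_0^{1,p}(D)$ since $D$ is bounded and $p\ge2$), I get $\lambda_1(\Omega)\le R_\Omega(u)=J_\Omega(u)\le\liminf_n J_{\Omega_n}(u_n)=\liminf_n\lambda_1(\Omega_n)$. Since this reasoning applies along any subsequence, the whole sequence $\lambda_1(\Omega_n)$ converges to $\lambda_1(\Omega)$.

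The step I expect to be the main obstacle is the passage from the resolvent form of $\gamma_p$-convergence used in Theorem~\ref{Bucur} to the Mosco convergence of the Sobolev spaces for the mixed operator $\Delta_p+\Delta$: one must check that both Mosco conditions follow, which rests on the monotonicity and the $(S_+)$-property of $-\Delta_p-\Delta$, the linear term being a compact, order-zero perturbation of the $p$-Laplacian that does not affect the argument. A secondary technical point is the non-degeneracy of the limit eigenfunction, namely that the weak $W_0^{1,p}$-limit $u$ is nonzero; this is precisely where the strict inequality $q<p$, hence the compactness of $W_0^{1,p}(D)\hookrightarrow L^q(D)$, is used, and it is also what guarantees that the normalisation survives in the limit.
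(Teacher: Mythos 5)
Your proposal is correct and follows essentially the same route as the paper: Bucur--Trebeschi gives $\gamma_p$-convergence hence Mosco convergence, the first Mosco condition yields $\limsup_n\lambda_1(\Omega_n)\le\lambda_1(\Omega)$ via a recovery sequence, and the second Mosco condition plus weak lower semicontinuity of the gradient terms yields the reverse $\liminf$ inequality. In fact your write-up is tighter than the paper's in two places it leaves implicit, namely the boundedness of the eigenfunctions $(u_n)$ in $W_0^{1,p}(D)$ (which you derive from the $\limsup$ bound after normalising $\frac1q\int|u_n|^q=1$) and the non-vanishing of the limit $u$ (which you get from the compact embedding into $L^q(D)$).
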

	
	\begin{proof}[Proof of Theorem\ref{conver}]
		The case $p > d$ is trivial since all functions in
		$W^{1,p}(\mathbb{R}^d)$ are continuous. Let $d\geq p > d-1.$
		Let us take
		\begin{equation*}
			\lambda_{1}(\Omega_n):=\inf_{\phi_n\in W_0^{1,p}(\Omega_n)\smallsetminus \{0\}}\dfrac{\frac{1}{p}\int_{\Omega_n}\arrowvert \nabla \phi_n\arrowvert ^{p}dx+\frac{1}{2}\int_{\Omega_n}\arrowvert \nabla \phi_n\arrowvert ^{2}dx}{\frac{1}{q}\int_{\Omega_n}|\phi_n|^{q}}=\dfrac{\frac{1}{p}\int_{\Omega_n}\arrowvert \nabla u_n\arrowvert ^{p}dx+\frac{1}{2}\int_{\Omega_n}\arrowvert \nabla u_n\arrowvert ^{2}dx}{\frac{1}{q}\int_{\Omega_n}|u_n|^{q}}
		\end{equation*}
		where the minimum is attained by $u_n$, and
		\begin{equation*}
			\lambda_{1}(\Omega):=\inf_{\phi\in W_0^{1,p}(\Omega)\smallsetminus \{0\}}\dfrac{\frac{1}{p}\int_{\Omega}\arrowvert \nabla \phi\arrowvert ^{p}dx+\frac{1}{2}\int_{\Omega}\arrowvert \nabla \phi_n\arrowvert ^{2}dx}{\frac{1}{q}\int_{\Omega}|\phi|^{q}}=\dfrac{\frac{1}{p}\int_{\Omega}\arrowvert \nabla u_1\arrowvert ^{p}dx+\frac{1}{2}\int_{\Omega}\arrowvert \nabla u_1\arrowvert ^{2}dx}{\frac{1}{q}\int_{\Omega}|u_1|^{q}}
		\end{equation*}
		where the minimum is attained by $u_1.$\\
		By the Bucur and Trebeschi theorem, $\Omega_n$ $\gamma_p-$converges to $\Omega$.This implies $W^{1,p}(\Omega_n)$ converges in the sense of Mosco to $W^{1,p}(\Omega).$\\
		If the sequence $(u_n)$ is bounded in $W_0^{1,p}(D)$ then there exists a subsequence still denoted
		$u_n$ such that $u_n$ converges weakly in $W^{1,p}_0(D)$ to a function $u$. The second condition of Mosco
		implies that $u\in W^{1,p}(\Omega).$
		Using the weak lower semicontinuity of the $L^{p}-$norm, we have the inequality
		\begin{equation*}
			\lim\limits_{n\rightarrow +\infty}\inf\dfrac{\frac{1}{p}\int_{D}\arrowvert \nabla u_n\arrowvert ^{p}dx+\frac{1}{2}\int_{D}\arrowvert \nabla u_n\arrowvert ^{2}dx}{\frac{1}{q}\int_{D}|u_n|^{q}dx}\geq\dfrac{\frac{1}{p}\int_{\Omega}\arrowvert \nabla u\arrowvert ^{p}dx+\frac{1}{2}\int_{\Omega}\arrowvert \nabla u\arrowvert ^{2}dx}{\frac{1}{q}\int_{\Omega}|u|^{q}dx}\geq\dfrac{\frac{1}{p}\int_{\Omega}\arrowvert \nabla u_1\arrowvert ^{p}dx+\frac{1}{2}\int_{\Omega}\arrowvert \nabla u_1\arrowvert ^{2}dx}{\frac{1}{q}\int_{\Omega}|u_1|^{q}dx}?
		\end{equation*}
		then  
		\begin{equation}\label{to1}
			\lim\limits_{n\rightarrow +\infty}\inf \lambda_{1}(\Omega_n)\geq \lambda_{1}(\Omega).
		\end{equation}
		Using the first condition of Mosco, there exists a sequence $(v_n)\in W_0^{1,p}(\Omega_n)$ such that $v_n$  converges strongly in $W_0^{1,p}(D)$ to $u_1$.
		
		We have
		\begin{equation*}
			\lambda_{1}(\Omega_n)\leq\dfrac{\frac{1}{p}\int_{\Omega}\arrowvert \nabla u_n\arrowvert ^{p}dx+\frac{1}{2}\int_{\Omega}\arrowvert \nabla u_n\arrowvert ^{2}dx}{\frac{1}{q}\int_{\Omega}|u_n|^{q}}	
		\end{equation*}
		this implies that
		
		\begin{eqnarray*}
			\lim\limits_{n\rightarrow +\infty}\sup \lambda_{1}(\Omega_n)&\leq& \lim\limits_{n\rightarrow +\infty}\sup\dfrac{\frac{1}{p}\int_{D}\arrowvert \nabla u_n\arrowvert ^{p}dx+\frac{1}{2}\int_{D}\arrowvert \nabla u_n\arrowvert ^{2}dx}{\frac{1}{q}\int_{D}|u_n|^{q}}\\ \nonumber
			&= &\lim\limits_{n\rightarrow +\infty}\dfrac{\frac{1}{p}\int_{D}\arrowvert \nabla u_n\arrowvert ^{p}dx+\frac{1}{2}\int_{\Omega}\arrowvert \nabla u_n\arrowvert ^{2}dx}{\frac{1}{q}\int_{D}|u_n|^{q}}\\ \nonumber
			&=& \dfrac{\frac{1}{p}\int_{\Omega}\arrowvert \nabla u_1\arrowvert ^{p}dx+\frac{1}{2}\int_{\Omega}\arrowvert \nabla u_1\arrowvert ^{2}dx}{\frac{1}{q}\int_{\Omega}|u_1|^{q}}
		\end{eqnarray*}
		then 
		\begin{eqnarray}\label{to2}
			\lim\limits_{n\rightarrow +\infty}\sup \lambda_{1}(\Omega_n)&\leq&\lambda_{1}(\Omega)
		\end{eqnarray}
		By the relations (\ref{to1}) and (\ref{to2}) we conclude that $\lambda_{1}(\Omega_n)$ converges to $\lambda_{1}(\Omega)$. 
	\end{proof}	
	\section{Existence results}
	
	We are  interested the existence of a minimizer for the following problem
	\begin{equation*}
		\min\{ \lambda_{1}(\Omega),\Omega\in \mathcal{A}, |\Omega|\leq c\}
	\end{equation*}
	where $c\in (0, |D|)$ , $\mathcal{A}$ is a family of admissible domain defined by
	\begin{equation*}
		\mathcal{A}=\{ \Omega\subseteq D, \Omega\ \mbox{is quasi-open} \}	
	\end{equation*}
	and $\lambda_{1}(\Omega)$ is defined by
	\begin{equation*}
		\lambda_{1}(\Omega):=\inf_{\phi\in W_0^{1,p}(\Omega)\smallsetminus \{0\}}\dfrac{\frac{1}{p}\int_{\Omega}\arrowvert \nabla \phi\arrowvert ^{p}dx+\frac{1}{2}\int_{\Omega}\arrowvert \nabla \phi\arrowvert ^{2}dx}{\frac{1}{q}\int_{\Omega}|\phi|^{q}}=\dfrac{\frac{1}{p}\int_{\Omega}\arrowvert \nabla u_1\arrowvert ^{p}dx+\frac{1}{2}\int_{\Omega}\arrowvert \nabla u_1\arrowvert ^{2}dx}{\frac{1}{q}\int_{\Omega}|u_1|^{q}}.
	\end{equation*}
	The Sobolev space $W^{1,p}_0(\Omega)$ is seen as a closed subspace of $W^{1,p}_0(D)$ defined by
	\begin{equation*}
		W^{1,p}_0(\Omega)=\left\lbrace u\in W^{1,p}_0(D): u= 0 \  p-q.e \ on\ D\backslash\Omega\right\rbrace .
	\end{equation*}
	The problem is to look for weak topology constraints which would make the class $\mathcal{A}$ sequentially compact. This convergence is called weak  $\gamma_p-$convergence for quasi open sets.
	\begin{Def}
		We say that a sequence $(\Omega_n)$ of $\mathcal{A}$ weakly $\gamma_p-$converges to $\Omega$ in $\mathcal{A}$ if the sequence $u_n$ converges weakly in $W^{1,p}_0(D)$ to a function $u\in W^{1,p}_0(D)$ (that we may take as quasicontinuous) such that $\Omega=\left\lbrace u>0\right\rbrace .$
	\end{Def}
	We have the following theorem.
	\begin{thm}\label{exi 1}
		The problem 
		\begin{eqnarray}\label{p2}
			\min\left\lbrace  \lambda_{1}(\Omega),\Omega\in \mathcal{A}, |\Omega|\leq c\right\rbrace 
		\end{eqnarray}
		admits at least one solution.
	\end{thm}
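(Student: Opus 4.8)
The plan is to apply the direct method of the calculus of variations, using precisely the notion of weak $\gamma_p$-convergence introduced above. Write $m:=\inf\{\lambda_1(\Omega):\Omega\in\mathcal{A},\ |\Omega|\le c\}$; since any ball of volume $c$ contained in $D$ is admissible, $0\le m<+\infty$. Choose a minimizing sequence $(\Omega_n)\subset\mathcal{A}$ with $|\Omega_n|\le c$ and $\lambda_1(\Omega_n)\to m$, and for each $n$ let $u_n\ge 0$ be a minimizer of the Rayleigh quotient defining $\lambda_1(\Omega_n)$ --- equivalently a nonnegative first eigenfunction, so that $u_n\in W_0^{1,p}(\Omega_n)\cap L^{\infty}(\Omega_n)$ --- normalized by $\tfrac1q\int_{\Omega_n}|u_n|^q\,dx=1$. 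Extending $u_n$ by $0$ on $D\setminus\Omega_n$ we regard $u_n\in W_0^{1,p}(D)$, and then $\tfrac1p\int_D|\nabla u_n|^p\,dx+\tfrac12\int_D|\nabla u_n|^2\,dx=\lambda_1(\Omega_n)\to m$.

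First I would extract a limit. The identity above bounds $\int_D|\nabla u_n|^p\,dx$, so by Poincar\'e's inequality on the bounded set $D$ the sequence $(u_n)$ is bounded in $W_0^{1,p}(D)$; passing to a subsequence, $u_n\rightharpoonup u$ weakly in $W_0^{1,p}(D)$, and we take $u$ to be its $p$-quasi-continuous representative. Since $q<p$ the exponent $q$ is subcritical, the embedding $W_0^{1,p}(D)\hookrightarrow L^q(D)$ is compact, hence $u_n\to u$ strongly in $L^q(D)$ and, along a further subsequence, a.e. in $D$. Thus $\tfrac1q\int_D|u|^q\,dx=1$, so $u\not\equiv 0$, and $u\ge0$ a.e. Put $\Omega^*:=\{x\in D:\ u(x)>0\}$. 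As a level set of a $p$-quasi-continuous Sobolev function, $\Omega^*$ is quasi-open, i.e. $\Omega^*\in\mathcal{A}$; and since $u=0$ $p$-q.e. on $D\setminus\Omega^*$, we have $u\in W_0^{1,p}(\Omega^*)\setminus\{0\}$. In other words $(\Omega_n)$ weakly $\gamma_p$-converges to $\Omega^*$.

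Next I would pass the volume constraint to the limit. For a.e.\ $x$ with $u(x)>0$ one has $u_n(x)\to u(x)>0$, hence $x\in\Omega_n$ for $n$ large; therefore $\mathbf{1}_{\Omega^*}\le\liminf_n\mathbf{1}_{\Omega_n}$ a.e., and Fatou's lemma gives $|\Omega^*|\le\liminf_n|\Omega_n|\le c$. So $\Omega^*$ is admissible for the minimization problem.

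Finally, the semicontinuity of the energy closes the argument. The functionals $v\mapsto\int_D|\nabla v|^p\,dx$ and $v\mapsto\int_D|\nabla v|^2\,dx$ are convex and strongly continuous on $W_0^{1,p}(D)$ (recall $p\ge2$ and $D$ bounded, so $W_0^{1,p}(D)\hookrightarrow W_0^{1,2}(D)$), hence weakly lower semicontinuous; adding the two lower-semicontinuity inequalities and using superadditivity of $\liminf$ together with $\lambda_1(\Omega_n)\to m$,
\[
\frac1p\int_D|\nabla u|^p\,dx+\frac12\int_D|\nabla u|^2\,dx\ \le\ \liminf_n\Bigl(\frac1p\int_D|\nabla u_n|^p\,dx+\frac12\int_D|\nabla u_n|^2\,dx\Bigr)=m .
\]
Using $u$ as a test function in the Rayleigh quotient of $\lambda_1(\Omega^*)$ and $\tfrac1q\int_D|u|^q\,dx=1$ gives $\lambda_1(\Omega^*)\le m$; and since $\Omega^*\in\mathcal{A}$ with $|\Omega^*|\le c$ we also have $\lambda_1(\Omega^*)\ge m$. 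Hence $\lambda_1(\Omega^*)=m$ and $\Omega^*$ solves (\ref{p2}). I expect the only delicate points to be the non-vanishing of the weak limit $u$ --- which relies on the compactness of $W_0^{1,p}(D)\hookrightarrow L^q(D)$, automatic here because $q<p$ --- and the stability of the volume constraint, handled by a.e.\ convergence and Fatou; the relaxation to $|\Omega|\le c$ (rather than $|\Omega|=c$) is exactly what makes this last step go through under weak $\gamma_p$-limits.
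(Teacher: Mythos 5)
Your proof is correct and follows essentially the same direct-method strategy as the paper: extract a weakly convergent subsequence of first eigenfunctions, define $\Omega^*=\{u>0\}$, and use weak lower semicontinuity of the energy. You do, however, supply details the paper leaves implicit or merely cites: the normalization $\tfrac1q\int|u_n|^q=1$ turns the paper's conditional ``if $(u_n)$ is bounded'' into an actual bound and, via the compact embedding $W_0^{1,p}(D)\hookrightarrow L^q(D)$, guarantees $u\not\equiv0$ (a point the paper does not address); and you prove the stability of the volume constraint directly by a.e.\ convergence and Fatou's lemma, where the paper invokes the weak $\gamma_p$-lower semicontinuity of the Lebesgue measure from the literature. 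Both routes are sound; yours is more self-contained, the paper's shorter because it outsources that step.
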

	
	\begin{proof}[Proof of \ref{exi 1} ]
		Let us take
		\begin{equation*}
			\lambda_{1}(\Omega_n):=\inf_{\phi_n\in W_0^{1,p}(\Omega_n)\smallsetminus \{0\}}\dfrac{\frac{1}{p}\int_{\Omega_n}\arrowvert \nabla \phi_n\arrowvert ^{p}dx+\frac{1}{2}\int_{\Omega_n}\arrowvert \nabla \phi_n\arrowvert ^{2}dx}{\frac{1}{q}\int_{\Omega_n}|\phi_n|^{q}dx}=\dfrac{\frac{1}{p}\int_{\Omega_n}\arrowvert \nabla u_n\arrowvert ^{p}dx+\frac{1}{2}\int_{\Omega_n}\arrowvert \nabla u_n\arrowvert ^{2}dx}{\frac{1}{q}\int_{\Omega_n}|u_n|^{q}dx}
		\end{equation*}
		where the minimum is attained by $u_n$, and
		\begin{equation*}
			\lambda_{1}(\Omega):=\inf_{\phi\in W_0^{1,p}(\Omega)\smallsetminus \{0\}}\dfrac{\frac{1}{p}\int_{\Omega}\arrowvert \nabla \phi\arrowvert ^{p}dx+\frac{1}{2}\int_{\Omega}\arrowvert \nabla \phi\arrowvert ^{2}dx}{\frac{1}{q}\int_{\Omega}|\phi|^{q}}=\dfrac{\frac{1}{p}\int_{\Omega}\arrowvert \nabla u_1\arrowvert ^{p}dx+\frac{1}{2}\int_{\Omega}\arrowvert \nabla u_1\arrowvert ^{2}dx}{\frac{1}{q}\int_{\Omega}|u_1|^{q}}
		\end{equation*}
		where the minimum is attained by $u_1.$\\
		Suppose that  $(\Omega_n)_{n\in \mathbb{N}}$ is a minimizing sequence of domain for the problem (\ref{p2}). We denote
		by $u_n$ a first eigenfunction on $\Omega_n$.\\
		Since $u_n$ is the first eigenfunction of $\lambda(\Omega_n), $ $u_n$ is strictly positive (cf \cite{Rocard1}), then the sequence $(\Omega_n)$ is defined by $\Omega_n=\{u_n>0\}.$ \\
		If the sequence $(u_n)$ is bounded in $W^{1,p}_0(D),$ then there exists a subsequence still denoted by $u_n$ such that $u_n$ converges weakly in $W^{1,p}_0(D),$ to a function $u$. \\
		Let $\Omega$ be quasi open and defined by $\Omega=\{u>0\}$, this implies that $u\in W^{1,p}_0(\Omega).$ As the
		sequence $(u_n)$ is bounded in $W^{1,p}_0(D),$ then
		\begin{eqnarray*}
			\lim\limits_{n\rightarrow+ \infty}
			\inf\dfrac{\frac{1}{p}\int_{\Omega_n}\arrowvert \nabla u_n\arrowvert ^{p}dx+\frac{1}{2}\int_{\Omega_n}\arrowvert \nabla u_n\arrowvert ^{2}dx}{\frac{1}{q}\int_{\Omega_n}|u_n|^{q}dx}
			&\geq&\dfrac{\frac{1}{p}\int_{\Omega}\arrowvert \nabla u\arrowvert ^{p}dx+\frac{1}{2}\int_{\Omega}\arrowvert \nabla u\arrowvert ^{2}dx}{\frac{1}{q}\int_{\Omega}|u|^{q}dx}\nonumber\\
			&\geq&\dfrac{\frac{1}{p}\int_{\Omega}\arrowvert \nabla u_1\arrowvert ^{p}dx+\frac{1}{2}\int_{\Omega}\arrowvert \nabla u_1\arrowvert ^{2}dx}{\frac{1}{q}\int_{\Omega}|u_1|^{q}dx}\nonumber\\
			&=& \lambda_{1}(\Omega).
		\end{eqnarray*}
		Now we show that $|\Omega|\leq c.$\\
		We know that if the sequence $\Omega_n$ weakly   $\gamma_p-$ converges to $\Omega$ and the Lebesgue measure is weakly $\gamma_p-$lower semicontinuous on the class $\mathcal{A}$ (see \cite{BUCUR2002}), then we obtain $|\{u > 0\}| \leq \lim\limits_{n\rightarrow +\infty}\inf  |\{u_n > 0\}| \leq c$ this implies that $|\Omega| \leq c$.	
	\end{proof}
	\section{Lipschitz continuity of the first eigenfunction }
	In this section, we focus on the regularity of the optimal shape of the following problem:
	\begin{equation}\label{glm}
		\min\{\lambda_{1}(\Omega),\Omega\subset D ,\ \Omega \  \mbox{quasi-ouvert }, |\Omega|=c \}
	\end{equation}
	where $D$ is an open in $\mathbb{R}^n$ and $c\in (0, |D|).$ 
	It is shown in \cite{Rocard1} , that if $D$ contains a ball of volume $c$, then this ball is a solution of the problem, and is moreover unique, up to translations (and zero-capacity sets).
	\subsection{Free boundary formulation}
	\noindent\\
	We first give an equivalent version of the problem (\ref{glm}) as a free boundary problem, namely an optimization problem in $W^{1,p}_0(D)$ where the domains are sets of level functions.\\
	\textbf{Notation:} For $w\in W^{1,p}_0(D)$, we will denote $\Omega_w=\{x\in D, w(x)\neq0\}.$
	Recall that for a quasi-open bounded subset $\Omega$ of $D$
	
	\begin{equation}\label{min}
		\lambda_{1}(\Omega)=\min \left\lbrace  \dfrac{\frac{1}{p} \displaystyle\int_{\Omega}\arrowvert \nabla u\arrowvert ^{p}dx+\frac{1}{2}\displaystyle\int_{\Omega}\arrowvert \nabla u\arrowvert ^{2}dx}{\frac{1}{q}\displaystyle\int_{\Omega}|u|^{q}dx}, \ \ u \in W_0^{1,p}(\Omega)\smallsetminus \{0\} \right\rbrace 
	\end{equation}
	\begin{Def}\label{def11}
		In this subsection, we denote by $u_{\Omega}$ any positive minimizer in (\ref{min}), that is such that
		\begin{equation}
			u_{\Omega} \in W^{1,p}_{0}(\Omega)\smallsetminus \{0\},\ \dfrac{\frac{1}{p} \displaystyle\int_{\Omega}\arrowvert \nabla u_{\Omega}\arrowvert ^{p}dx+\frac{1}{2} \displaystyle\int_{\Omega}\arrowvert \nabla u_{\Omega}\arrowvert ^{2}dx}{\frac{1}{q}\displaystyle\int_{\Omega}|u_{\Omega}|^{q}dx}=\lambda_{1}(\Omega).
		\end{equation}
	\end{Def}
	\begin{rem}\label{rem3.3}
		Select from (\ref{min}), $\nu=\nu(t):u_{\Omega}+ t\phi$ avec $\phi\in W^{1,p}_0(\Omega)$ and using that the derivative at $t=0$ for $t\mapsto\dfrac{\frac{1}{p}\displaystyle\int_{\Omega}\arrowvert \nabla \nu(t)\arrowvert ^{p}dx+\frac{1}{2}\displaystyle\int_{\Omega}\arrowvert \nabla \nu(t)\arrowvert ^{2}dx}{\frac{1}{q}\displaystyle\int_{\Omega}|\nu(t)|^{q}dx}$  vanishes leads to
		\begin{equation}\label{vari12c}
			\forall \phi \in W^{1,p}_0(\Omega),\ \ 	\displaystyle\int_{\Omega}| \nabla u_{\Omega}|^{p-2}\nabla u_{\Omega} \nabla\phi dx +\displaystyle\int_{\Omega}\nabla u_{\Omega} \nabla\phi dx=\lambda_1(\Omega)\displaystyle\int_{\Omega} |u_{\Omega}|^{q-2}u_{\Omega}\phi dx.
		\end{equation} 
		If $\Omega$ is an open set, (\ref{vari12c}) means exactly that $	-\Delta_p u -\Delta 
		u   = \lambda_1 |u|^{q-2}u$  in the sense of the distributions in $\Omega$.
		
	\end{rem}
	Note that if $u_{\Omega}$ is a minimizer in (\ref{min}), so is $|u_{\Omega}|$. Therefore, without lost of generality, we can assume that $u_{\Omega}\geq0$ and we will always make this assumption in this section on the minimization of $\lambda_{1}(\Omega)$. If $\Omega$ is a connected open set then $u_{\Omega}>0$ on $\Omega.$ This is a consequence of the maximum principle applied to $-\Delta_p u -\Delta 
	u   = \lambda_1(\Omega) |u|^{q-2}u\geq 0.$ This extends (quasi-everywhere) to the case when $\Omega$ is a quasi-connected quasi-open set, but the proof requires a little more computation.
	Since $\Omega \mapsto \lambda_{1}(\Omega)$ is nonincreasing with respect to inclusion, any solution of(\ref{glm}) is also solution of 
	\begin{eqnarray}\label{p22}
		\min\{ \lambda_{1}(\Omega),\Omega\in \mathcal{A}, |\Omega|\leq c\}
	\end{eqnarray}
	The converse is true in most situations, in particular if $D$ is connected. Note that it may happen that if $D$ is not connected, then a solution to (\ref{p22}) does not satisfy $|\Omega|=c.$\\
	We will first consider the problem (\ref{p22}), and this will nevertheless provide a complete
	understanding of (\ref{glm}). We start by proving that (\ref{p22}) is equivalent to a free boundary problem.
	\begin{prop}\label{prop3.4}
		\begin{enumerate}
			\item Let $\Omega^*$ be a quasi-open solution of the minimization problem (\ref{p22}) and let us set $u=u_{\Omega^*}$. Then 
			\begin{eqnarray}\label{p222}
				\dfrac{\frac{1}{p}\displaystyle\int_{D}\arrowvert \nabla u\arrowvert ^{p}dx+\frac{1}{2}\displaystyle\int_{D}\arrowvert \nabla u\arrowvert ^{2}dx}{\frac{1}{q}\displaystyle\int_{D}|u|^{q}dx}=\min_{ v\in  E} \left\lbrace  \dfrac{\frac{1}{p}\displaystyle\int_{D}\arrowvert \nabla v\arrowvert ^{p}dx+\frac{1}{2}\displaystyle\int_{D}\arrowvert \nabla v\arrowvert ^{2}dx}{\frac{1}{q}\displaystyle\int_{D}|v|^{q}dx} \right\rbrace,
			\end{eqnarray}
		with $E=\{  v \in W_0^{1,p}(D)\smallsetminus \{0\}, \ |\Omega_v|\leq c\}.$
			\item Let $u$ be the solution of the minimization problem (\ref{p222}), then $\Omega_u$ is a solution of (\ref{p22}).
		\end{enumerate}	
	\end{prop}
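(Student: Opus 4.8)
The plan is to exploit the monotonicity $\Omega \mapsto \lambda_1(\Omega)$ together with the correspondence between a quasi-open set $\Omega$ and the level set $\Omega_u$ of its eigenfunction $u = u_\Omega$. For part (1), I would first observe that $u = u_{\Omega^*} \in W^{1,p}_0(D) \setminus \{0\}$ and $|\Omega_u| \le |\Omega^*| \le c$, so $u \in E$; moreover, since $u$ vanishes $p$-q.e.\ outside $\Omega^*$, the integrals over $D$ in \eqref{p222} coincide with the integrals over $\Omega^*$ in \eqref{min}, hence the left-hand side of \eqref{p222} equals $\lambda_1(\Omega^*)$. It remains to show this is the minimum over $E$. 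Take any $v \in E$; then $\Omega_v$ is a quasi-open subset of $D$ with $|\Omega_v| \le c$, so $\Omega_v \in \mathcal{A}$ is admissible for \eqref{p22}. Since $v \in W^{1,p}_0(\Omega_v) \setminus \{0\}$, the Rayleigh quotient of $v$ (computed over $D$, equivalently over $\Omega_v$) is at least $\lambda_1(\Omega_v)$, and by optimality of $\Omega^*$ in \eqref{p22} we have $\lambda_1(\Omega_v) \ge \lambda_1(\Omega^*)$. Chaining these inequalities gives the reverse inequality, proving \eqref{p222}.

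For part (2), let $u$ realize the minimum in \eqref{p222}. First I would check $\Omega_u = \{u \neq 0\}$ is quasi-open with $|\Omega_u| \le c$, so $\Omega_u \in \mathcal{A}$ is admissible for \eqref{p22}. The key point is the inequality $\lambda_1(\Omega_u) \le$ (value in \eqref{p222}): indeed $u \in W^{1,p}_0(\Omega_u) \setminus \{0\}$, so $\lambda_1(\Omega_u)$ is bounded above by the Rayleigh quotient of $u$, which is exactly the minimal value in \eqref{p222}. Conversely, for any admissible $\Omega \in \mathcal{A}$ with $|\Omega| \le c$, pick its eigenfunction $u_\Omega$; then $u_\Omega \in E$ (after extending by zero to $D$), so the value in \eqref{p222} is $\le$ the Rayleigh quotient of $u_\Omega$, which equals $\lambda_1(\Omega)$. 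Combining, $\lambda_1(\Omega_u) \le \lambda_1(\Omega)$ for every admissible $\Omega$, so $\Omega_u$ solves \eqref{p22}.

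The main obstacle is a measurability/representative issue rather than a deep analytic one: one must be careful that the $p$-quasi-continuous representative of $u$ is used throughout so that $\Omega_u = \{u \neq 0\}$ is genuinely quasi-open, and that the identification $W^{1,p}_0(\Omega_u) \ni u$ is legitimate (this is exactly the characterization of $W^{1,p}_0(\Omega)$ as functions vanishing $p$-q.e.\ off $\Omega$, recalled in Section 4). A second minor point to verify is that the infimum $\lambda_1(\Omega_v)$ is attained (so that the phrase ``$u_\Omega$ a first eigenfunction'' makes sense), which is guaranteed by the existence result cited from \cite{Rocard1} together with direct-method arguments as in Theorem \ref{exi 1}. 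Everything else is a formal comparison of Rayleigh quotients and reduces to the monotonicity of $\lambda_1$ under inclusion, which is immediate since enlarging $\Omega$ enlarges the admissible space $W^{1,p}_0(\Omega)$.
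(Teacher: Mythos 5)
Your proposal is correct and follows essentially the same route as the paper: in both parts the argument is the chain of comparisons $\lambda_1(\Omega^*)=\mathrm{RQ}(u)\le \lambda_1(\Omega_v)\le \mathrm{RQ}(v)$ for $v\in E$, and conversely $\lambda_1(\Omega_u)\le \mathrm{RQ}(u)\le \mathrm{RQ}(u_\Omega)=\lambda_1(\Omega)$ for admissible $\Omega$. The extra care you take about quasi-continuous representatives and the attainment of the infimum is a welcome refinement of the paper's terser presentation, but not a different method.
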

	\begin{proof}[Proof of the proposition \ref{prop3.4}]
		For the first point, we choose $v\in W^{1,p}_{0}(D)$, with $|\Omega_v|\leq c$ and we apply (\ref{p22}). This gives $\dfrac{\frac{1}{p}\displaystyle\int_{D}\arrowvert \nabla u\arrowvert ^{p}dx+\frac{1}{2}\displaystyle\int_{D}\arrowvert \nabla u\arrowvert ^{2}dx}{\frac{1}{q}\displaystyle\int_{D}|u|^{q}dx}=\lambda_{1}(\Omega^*)\leq \lambda_{1}(\Omega_v)$ and we have
		
		\begin{eqnarray*}\label{p223}
			\dfrac{\frac{1}{p}\displaystyle\int_{D}\arrowvert \nabla u\arrowvert ^{p}dx+\frac{1}{2}\displaystyle\int_{D}\arrowvert \nabla u\arrowvert ^{2}dx}{\frac{1}{q}\displaystyle\int_{D}|u|^{q}dx}\leq\min \left\lbrace  \dfrac{\frac{1}{p}\displaystyle\int_{D}\arrowvert \nabla v\arrowvert ^{p}dx+\frac{1}{2}\displaystyle\int_{D}\arrowvert \nabla v\arrowvert ^{2}dx}{\frac{1}{q}\displaystyle\int_{D}|v|^{q}dx}, \ \ v \in W_0^{1,p}(D)\smallsetminus \{0\}, \ |\Omega_v|\leq c \right\rbrace
			.
		\end{eqnarray*}
		
		Equality holds since $u\in  W^{1,p}_{0}(\Omega^*)\subset  W^{1,p}_{0}(D)$ with $\Omega^*=\{ u>0\}=\Omega_u\subset D$ and $|\Omega_u|=|\Omega^*|\leq c.$\\
		For the second point, let $u$ be a solution of (\ref{p222}). Then $|\Omega_u|\leq c.$  Let $\Omega\subset D$ quasi-open with $|\Omega|\leq c$ and let $u_{\Omega}$ as in the Definition \ref{def11}.  Then
		
		\begin{eqnarray*}
			\lambda_{1}(\Omega_u)&\leq& \dfrac{\frac{1}{p}\displaystyle\int_{D}\arrowvert \nabla u\arrowvert ^{p}dx+\frac{1}{2}\displaystyle\int_{D}\arrowvert \nabla u\arrowvert ^{2}dx}{\frac{1}{q}\displaystyle\int_{D}|u|^{q}dx}\\
			& \leq& \dfrac{\frac{1}{p}\displaystyle\int_{D}\arrowvert \nabla u_{\Omega}\arrowvert ^{p}dx+\frac{1}{2}\displaystyle\int_{D}\arrowvert \nabla u_{\Omega}\arrowvert ^{2}dx}{\frac{1}{q}\displaystyle\int_{D}|u_{\Omega}|^{q}dx}\\
			&=&\lambda_{1}(\Omega).
		\end{eqnarray*}
	\end{proof}
	\begin{rem}\label{remarque 5}
		We will now work with the functional problem (\ref{p222}) rather than (\ref{p22}). Note that if $D$ is bounded (or with finite measure), the existence of the minimum $u$ follows easily from the compactness of $W^{1,p}_{0}(D)$ in $L^p(D)$ applied to a minimizing sequence (that we may assume to be weakly convergent in $W^{1,p}_{0}(D)$ and strongly in $L^p(D)$).
	\end{rem}
	
	\begin{rem}\label{rem3.6}
		Two different situations may occur. If $D$ is connected and $\Omega^*$ is a solution of (\ref{p22}), then $c^*:= |[u_{\Omega^*}>0]|=c$ and $\Omega^*=[u_{\Omega^*}>0].$ If $D$ is not connected, it may happen that $ c^* < c$ and therefore $u_{\Omega^*} > 0$ on some of the connected components of $D$ and identically zero on the others.\\	
		Indeed, if $ c^* < c,$ then for all balls $B \subset D$ whith measure is less than $c-c^*$ and for all $\varphi \in W^{1,p}_{0}(B) $, we can choose, $\nu=\nu(t): =u+ t\varphi$ with $u:= u_{\Omega^*}\geq0$ in (\ref{p222}). Using that the derivative at  $t=0$ for $t\mapsto\dfrac{\frac{1}{p}\displaystyle\int_{D}\arrowvert \nabla \nu(t)\arrowvert ^{p}dx+\frac{1}{2}\displaystyle\int_{D}\arrowvert \nabla \nu(t)\arrowvert ^{2}dx}{\frac{1}{q}\displaystyle\int_{D}|\nu(t)|^{q}dx}$  vanishes leads to
		\begin{equation}\label{vari12c}
			\forall \phi \in W^{1,p}_0(D),\ \ 	\int_{\Omega}| \nabla u_{\Omega}|^{p-2}\nabla u_{\Omega} \nabla\phi dx +\int_{\Omega}\nabla u_{\Omega} \nabla\phi dx=\lambda_1(\Omega)\int_{\Omega} |u_{\Omega}|^{q-2}u_{\Omega}\phi dx,
		\end{equation}
		with $  \lambda_c:=\dfrac{\frac{1}{p}\displaystyle\int_{D}\arrowvert \nabla u\arrowvert ^{p}dx+\frac{1}{2}\displaystyle\int_{D}\arrowvert \nabla u\arrowvert ^{2}dx}{\frac{1}{q}\displaystyle\int_{D}|u|^{q}dx}$. This implies that $	-\Delta_p u -\Delta u   = \lambda_c |u|^{q-2}u$  in the sense of the distributions in $D$. The strict maximum principle implies that in each connected component of $D$, either $u>0$, or $u\equiv 0$. If $D$ is connected, we have $\{u>0\}=D$ and we get a contradiction since $c <|D|$  and $c^*=|\Omega^*|=|D|$. Therefore necessarily $c^* = c$ if $D$ is connected.	\\ \\
	\end{rem}
	\subsection{Existence and Lipschitz regularity of the state function }
	\noindent\\
	\subsubsection{Equivalence with a penalized version}
	\noindent\\
	We shall first prove that (\ref{p222}) is equivalent to a penalized version
	\begin{prop}\label{propvp}
		Suppose that $|D|<+\infty.$ Let u be a solution of (\ref{p222}) and 
		\begin{equation*}
			\lambda_c:=\dfrac{\frac{1}{p}\displaystyle\int_{D}\arrowvert \nabla u\arrowvert ^{p}dx+\frac{1}{2}\displaystyle\int_{D}\arrowvert \nabla u\arrowvert ^{2}dx}{\frac{1}{q}\displaystyle\int_{D}|u|^{q}dx}
		\end{equation*}
		Then, there exists $\mu > 0$ such that
		\begin{eqnarray}\label{p225}
			\dfrac{\frac{1}{p}\displaystyle\int_{D}\arrowvert \nabla u\arrowvert ^{p}dx+\frac{1}{2}\displaystyle\int_{D}\arrowvert \nabla u\arrowvert ^{2}dx}{\frac{1}{q}\displaystyle\int_{D}|u|^{q}dx}&\leq&  \frac{1}{p}\int_{D}|\nabla v|^{p}+ \frac{1}{2}\displaystyle\int_{D}| \nabla v|^{2} + \lambda_c\left[ 1-\frac{1}{q}\displaystyle\int_{D}| v|^{q}\right]\\
			& & +\mu\left[|\Omega_v|- c\right]^+,  \forall v\in  W^{1,p}_{0}(D).\nonumber
		\end{eqnarray}
	\end{prop}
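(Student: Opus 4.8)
The plan is to run the classical ``penalization by dilation'' argument (in the spirit of \cite{Briancon,Briancon189,Henrot2017}). Introduce, for $v\in W_0^{1,p}(D)$, the Rayleigh quotient $R(v):=\big(\tfrac1p\int_D|\nabla v|^p+\tfrac12\int_D|\nabla v|^2\big)\big/\big(\tfrac1q\int_D|v|^q\big)$ and the functional
\[
J_\mu(v):=\tfrac1p\int_D|\nabla v|^p+\tfrac12\int_D|\nabla v|^2+\lambda_c\Big[1-\tfrac1q\int_D|v|^q\Big]+\mu\big[|\Omega_v|-c\big]^+ .
\]
Since $u$ realises the minimum in (\ref{p222}), one has $\tfrac1p\int_D|\nabla u|^p+\tfrac12\int_D|\nabla u|^2=\lambda_c\cdot\tfrac1q\int_D|u|^q$ and $|\Omega_u|\le c$, so $J_\mu(u)=\lambda_c$ for every $\mu>0$; hence \eqref{p225} is exactly the statement that, for a suitable $\mu>0$, $J_\mu(v)\ge\lambda_c$ on $W_0^{1,p}(D)$, i.e.\ that $u$ minimises $J_\mu$. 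I would argue by contradiction: suppose that for every $n\in\mathbb N$ there is $v_n$ with $J_n(v_n)<\lambda_c$.

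Because $|D|<+\infty$ and $q<p$, the Sobolev--Poincar\'e inequality $\int_D|v|^q\le C\|\nabla v\|_{L^p}^q$ makes $J_n$ coercive on $W_0^{1,p}(D)$ and bounded below; it is also weakly lower semicontinuous (the gradient terms are convex, $v\mapsto\int_D|v|^q$ is weakly continuous by the compact embedding $W_0^{1,p}(D)\hookrightarrow L^q(D)$, and $v\mapsto[\,|\Omega_v|-c\,]^+$ is weakly l.s.c.\ since along a weakly convergent subsequence one may pass to an a.e.\ limit and apply Fatou to $\mathbf 1_{\{v\ne0\}}$). Thus $J_n$ attains its infimum at some $u_n$ (which we may take $\ge 0$), with $J_n(u_n)\le J_n(v_n)<\lambda_c=J_n(0)$, so $u_n\ne 0$, and $(u_n)$ is bounded in $W_0^{1,p}(D)$; in particular $C_n:=\tfrac1q\int_D|u_n|^q\le M_0$ for some $M_0$ independent of $n$. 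If $|\Omega_{u_n}|\le c$ then $u_n$ is admissible in (\ref{p222}), so $R(u_n)\ge\lambda_c$ and $J_n(u_n)=\lambda_c+C_n\big(R(u_n)-\lambda_c\big)\ge\lambda_c$, a contradiction; hence $\varepsilon_n:=|\Omega_{u_n}|-c>0$. Writing out $J_n(u_n)<\lambda_c$ gives $C_n(R(u_n)-\lambda_c)+n\varepsilon_n<0$, so $R(u_n)<\lambda_c$ and
\[
\lambda_c-R(u_n)\;>\;\frac{n\varepsilon_n}{C_n}\;\ge\;\frac{n\varepsilon_n}{M_0};
\]
in particular $\varepsilon_n\to0$.

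The crux is the dilation step. Recall that $D$ is a ball (Introduction), which we centre at the origin. Put $t_n:=\big(c/(c+\varepsilon_n)\big)^{1/d}\in(0,1)$ and $\tilde u_n(x):=u_n(x/t_n)$; then $\tilde u_n$ is supported in the concentric ball $t_nD\subset D$, so $\tilde u_n\in W_0^{1,p}(D)\setminus\{0\}$ with $|\Omega_{\tilde u_n}|=t_n^d|\Omega_{u_n}|=c$, hence $R(\tilde u_n)\ge\lambda_c$. Scaling the three integrals gives $R(\tilde u_n)=\big(t_n^{-p}A_n+t_n^{-2}B_n\big)/C_n$ with $A_n=\tfrac1p\|\nabla u_n\|_{L^p}^p$, $B_n=\tfrac12\|\nabla u_n\|_{L^2}^2$, and since $1<t_n^{-2}\le t_n^{-p}$ we get $R(\tilde u_n)\le t_n^{-p}R(u_n)$, whence
\[
\lambda_c-R(u_n)\;\le\;(t_n^{-p}-1)\,R(u_n)\;\le\;(t_n^{-p}-1)\,\lambda_c .
\]
As $\varepsilon_n\to0$, $t_n^{-p}-1=(1+\varepsilon_n/c)^{p/d}-1\le C_1\varepsilon_n$ for $n$ large, so $\lambda_c-R(u_n)\le C_1\lambda_c\varepsilon_n$. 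Comparing with the previous lower bound yields $n\varepsilon_n/M_0<C_1\lambda_c\varepsilon_n$, i.e.\ $n<C_1\lambda_c M_0$, false for $n$ large. This contradiction produces a $\mu>0$ with $J_\mu\ge\lambda_c$ on $W_0^{1,p}(D)$, which is \eqref{p225}. The only genuine obstacle is making the dilation legitimate --- keeping the dilated support inside $D$ and controlling the non-homogeneous Rayleigh quotient under scaling --- which is precisely where the hypothesis that $D$ is a ball enters; the rest is bookkeeping with coercivity and weak lower semicontinuity.
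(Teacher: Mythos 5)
Your argument is correct as far as it goes, but it takes a genuinely different route from the paper, and it proves a slightly weaker statement. The paper never compares $u$ with a rescaled competitor: it takes a minimizer $u_\mu$ of the penalized functional $J_\mu$, assumes $|\Omega_{u_\mu}|>c$, and perturbs \emph{from inside} by truncation, $u^t=(u_\mu-t)^+$. Writing $J_\mu(u_\mu)\le J_\mu(u^t)$, using the elementary inequality $|u_\mu-t|^q\ge |u_\mu|^q-qt|u_\mu|^{q-1}$ on $\{u_\mu\ge t\}$, and then the coarea formula together with $\tfrac1p x^{p-1}+\mu x^{-1}\ge \left(\tfrac{p}{p-1}\right)^{1-1/p}\mu^{1-1/p}$ and the isoperimetric inequality for the level sets $\{u_\mu>s\}$, it reaches $\left(\tfrac{p}{p-1}\right)^{1-1/p}\mu^{1-1/p}C(d)\,|\Omega_{u_\mu}|^{(d-1)/d}\le K$, which fails once $\mu$ exceeds an explicit threshold $\mu^*$. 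What that buys, beyond an explicit $\mu^*$, is (i) validity for \emph{any} open $D$ of finite measure, which is exactly the hypothesis of the proposition and is what Corollary 5.1 needs, and (ii) as the paper remarks afterwards, the argument uses only inner perturbations of $\Omega_u$, so it extends to shape subsolutions. Your dilation comparison, by contrast, is an \emph{outer} comparison and is non-quantitative in $\mu$ (though your constants are in principle traceable). Your reduction steps (existence of the minimizer $u_n$ of $J_n$, coercivity from $q<p$, weak lower semicontinuity of $v\mapsto[|\Omega_v|-c]^+$ via Fatou, the identity $J_n(u_n)=\lambda_c+C_n(R(u_n)-\lambda_c)+n\varepsilon_n$, and the scaling bookkeeping $R(\tilde u_n)\le t_n^{-p}R(u_n)$ using $p>2$) are all sound.

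The one substantive caveat is the one you yourself flag: the dilation step requires $D$ to be invariant under contraction about some point (a ball, or at least star-shaped). The proposition is stated under the sole hypothesis $|D|<+\infty$, and Section 5 and Corollary \ref{cor5.1} explicitly work with a general open $D$ of finite measure; for such $D$ the rescaled function $\tilde u_n$ need not lie in $W^{1,p}_0(D)$ and your comparison breaks down. So as written your proof covers only the ball case and does not establish the proposition in the generality in which it is used later; to get the general case you would need to replace the dilation by a local perturbation argument such as the truncation--coarea--isoperimetric one the paper actually uses.
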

	\begin{proof}[Proof of the proposition \ref{propvp} ]
		Note first that, by definition of $u$ and $\lambda_{c}$, for any $ v\ in \ W^{1,p}_{0}(D)$ with $|\Omega_v|\leq c$, we have $\frac{1}{p}\displaystyle\int_{D}|\nabla v|^{p}+ \frac{1}{2}\displaystyle\int_{D}| \nabla v|^{2} - \frac{\lambda_c}{q}\displaystyle\int_{D}|v|^{q}\geq 0$ or
		\begin{eqnarray}\label{p226}
			\dfrac{\frac{1}{p}\displaystyle\int_{D}\arrowvert \nabla u\arrowvert ^{p}dx+\frac{1}{2}\displaystyle\int_{D}\arrowvert \nabla u\arrowvert ^{2}dx}{\frac{1}{q}\displaystyle\int_{D}|u|^{q}dx}\leq  \frac{1}{p}\displaystyle\int_{D}|\nabla v|^{p}+ \frac{1}{2}\displaystyle\int_{D}| \nabla v|^{2} + \lambda_c\left[ 1-\frac{1}{q}\displaystyle\int_{D}| v|^{q}\right]\ \forall v\in  W^{1,p}_{0}(D).\nonumber\\
		\end{eqnarray}
		Let us now denote by $J_\mu(v)$ the right-hand side of (\ref{p225}) and let $u_\mu$ be a minimizer of $J_\mu(v)$ for $v\in W^{1,p}_{0}(D)$ (its existence follows from the compactness of $v\in W^{1,p}_{0}(D)$ in $L^p(D)$, see also Remark \ref{remarque 5} ). If we replace $u_\mu$ by $|u_\mu|$, we can suppose that $u_\mu \geq0$.\\
		For the conclusion of the proposition, it is sufficient to prove $|\Omega_{u_\mu}| \leq c$ since then 
		\begin{eqnarray*}
			J_{\mu}(u_\mu)\leq J_\mu(u)=\lambda_c=\dfrac{\frac{1}{p}\displaystyle\int_{D}\arrowvert \nabla u\arrowvert ^{p}dx+\frac{1}{2}\displaystyle\int_{D}\arrowvert \nabla u\arrowvert ^{2}dx}{\frac{1}{q}\displaystyle\int_{D}|u|^{q}dx}\leq	J_{\mu}(u_\mu)
		\end{eqnarray*}
		where the latter inequality comes from (\ref{p226}).\\
		In order to obtain a contradiction, let us suppose that $|\Omega_{u_\mu}|>c$ and introduce $u^t :=(u_\mu - t)^+$ where $t>0$ small enough. Then $J_{\mu}(u_\mu)\leq J_{\mu}(u^t)$ we have:
		\begin{eqnarray*}
			& &\frac{1}{p}\displaystyle\int_{D}|\nabla u_\mu |^{p}+ \frac{1}{2}\displaystyle\int_{D}| \nabla u_\mu|^{2} + \lambda_c\left[ 1-\frac{1}{q}\int_{D}|u_{\mu}|^q\right] +\mu\left[\Omega_{u_\mu}|- c\right]^+\nonumber\\
			&\leq& \frac{1}{p}\displaystyle\int_{D}|\nabla u^t |^{p}+ \frac{1}{2}\displaystyle\int_{D}| \nabla u^t|^{2} + \lambda_c\left[ 1-\frac{1}{q}\displaystyle\int_{D}|u^t|^q\right] +\mu\left[|\Omega_{u^t}|- c\right]^+\nonumber\\
			&=&\frac{1}{p}\displaystyle\int_{[u_\mu\geq t]}|\nabla u_\mu |^{p}+ \frac{1}{2}\displaystyle\int_{[u_\mu\geq t]}| \nabla u_\mu|^{2} + \lambda_c\left[ 1-\frac{1}{q}\int_{[u_\mu\geq t]}|u_\mu - t|^q \right] +\mu\left[|\Omega_{u^t}|- c\right]^+\nonumber\\
		\end{eqnarray*}
	Using $\Omega_{u_\mu}=\{0<u_\mu<t\}\cup\{u_\mu\geq t\}$ and $|\Omega_{u^t}| >c $ for $t$ small enough, we have
	\begin{eqnarray*}
		\frac{1}{p}\displaystyle\int_{[0<u_\mu<t]}|\nabla u_\mu|^{p}+ \frac{1}{2}\displaystyle\int_{[0<u_\mu<t]}| \nabla u_\mu|^{2}  +\mu|\left[0<u_\mu<t\right]|&\leq& \frac{\lambda_c}{q}\displaystyle \int_{[0<u_\mu<t]}|u_\mu|^q\\
		 & &+\frac{\lambda_c}{q}\displaystyle\int_{[u_\mu\geq t]} |u_\mu|^q-|u_\mu - t|^q.
	\end{eqnarray*}
Consider the function $x\in (0,\infty)\mapsto B(x)= x^q-1-q(x-1), \ q\geq 2.$ We have $B(1)=0$ and $B'(x)=q(x^{q-1}-1)$ is zero for $x = 1$ only, at this point, $B(x)$ takes its only minimum value, which is zero. Therefore $B(x)\geq0$ for all $x.$ Taking $x=1-\frac{t}{u_\mu}$ for $u_\mu> t$ we have $x>0$ and $B(x)=(1-\frac{t}{u_\mu})^q-1-q(1-\frac{t}{u_\mu}-1)\geq0$ i.e
$(1-\frac{t}{u_\mu})^q\geq 1-\frac{tq}{u_\mu}$ thus $|u_\mu - t|^q\geq |u_\mu|^q - qt|u|^{q-1}.$\\
We therefore obtain
	\begin{eqnarray*}
	\frac{1}{p}\displaystyle\int_{[0<u_\mu<t]}|\nabla u_\mu|^{p}+ \frac{1}{2}\displaystyle\int_{[0<u_\mu<t]}| \nabla u_\mu|^{2}  +\mu|\left[0<u_\mu<t\right]|
	&\leq& \frac{\lambda_c}{q}\displaystyle \int_{[0<u_\mu<t]}|u_\mu|^q+\lambda_c t\displaystyle\int_{[u_\mu\geq t]} |u_\mu|^{q-1}\\
	&\leq&\lambda_c t |[0<u_\mu<t]|+\lambda_c t\|u_\mu\|_{L^p(D)}^{q-1}|[u_\mu\geq t]|^{\frac{p+1-q}{p}}\\
	&\leq& tK, 
\end{eqnarray*}
with $K=K(p,q,\lambda_c,\|u_\mu\|_{L^p}|,|\Omega_{u_\mu}|).$
		And moreover, we have
		\begin{eqnarray*}
			\frac{1}{p}\displaystyle\int_{[0<u_\mu<t]}|\nabla u_\mu|^{p} +\mu|\left[0<u_\mu<t\right]|\leq 	\frac{1}{p}\displaystyle\int_{[0<u_\mu<t]}|\nabla u_\mu|^{p}+ \frac{1}{2}\displaystyle\int_{[0<u_\mu<t]}| \nabla u_\mu|^{2}  +\mu|\left[0<u_\mu<t\right]|
		\end{eqnarray*}
		Using the coaréa formula (see for example \cite{Evans}, \cite{Giusti}), this can be rewritten as
		\begin{eqnarray*}
			\displaystyle\int_{0}^{t}ds\displaystyle\int_{[u_\mu=s]}\left[ \frac{1}{p}|\nabla u_\mu|^{p-1}+\frac{\mu}{| \nabla u_\mu|}\right] d\mathcal{H}^{d-1}\leq t K
		\end{eqnarray*}
		But the function $x\in (0,\infty)\mapsto \frac{1}{p}x^{p-1}+\mu x^{-1}\in [0,\infty)$ is bounded by the bottom $\left( \frac{p}{p-1}\right) ^{1-\frac{1}{p}} \mu ^{1-\frac{1}{p}}.$ Therefore, it follows that
		\begin{eqnarray}
		\left( \frac{p}{p-1}\right) ^{1-\frac{1}{p}} \mu ^{1-\frac{1}{p}}\displaystyle\int_{0}^{t}ds\int_{[u_\mu=s]} d\mathcal{H}^{d-1}\leq t K
		\end{eqnarray}
		We now use the isoperimetric inequality $\displaystyle\int_{[u_\mu=s]} d\mathcal{H}^{d-1}\geq C(d)|[u_\mu >s]|^{\frac{d-1}{d}}$. We divide the inequality by $t$ and let $t\rightarrow 0$, to deduce
		
		\begin{eqnarray*}
			\left( \frac{p}{p-1}\right) ^{1-\frac{1}{p}} \mu ^{1-\frac{1}{p}}C(d)|\Omega_{u_\mu}|^{\frac{d-1}{d}}\leq  K
		\end{eqnarray*}
		Thus, $|\Omega_{u\mu}|>c$ is impossible if $\mu>\mu^*:= K^{\frac{p}{p-1}}\left( \frac{p-1}{p}\right) C(d)^{-\frac{p}{p-1}}c^{\frac{(1-d)p}{d(p-1)}}$. Therefore, the conclusion of the proposition \ref{propvp} is valid for any $\mu >\mu^*.$
	\end{proof}
	\begin{rem}
	Given a quasi-open set $\Omega\subset D$ and choosing $v=u_{\Omega}$ in (\ref{p225}), we obtain the penalized "domain" version of (\ref{p22}), where $\Omega^*$ is a solution of (\ref{p22})
	\begin{eqnarray}
		\lambda_{1}(\Omega^*)\leq \lambda_{1}(\Omega)+\frac{\mu q}{\frac{1}{q}\displaystyle\int_{D}| u_{\Omega}|^{q}dx}\left[|\Omega_{u_{\Omega}}|- c\right]^+, \  \forall \Omega\subset D, \ \Omega \ \mbox{quai-open}.
	\end{eqnarray}
\end{rem}
	\begin{rem}[Sub- and super-solutions] 
		Note that to prove the Proposition \ref{propvp}, we use only perturbations of the optimal domain $\Omega_u$ from inside. This means that the same
		result is valid for shape subsolutions where  (\ref{p222}) is assumed only for functions $v$ for which $\Omega_v\subset\Omega_u.$
	\end{rem}
	Next, we shall prove Lipschitz continuity of the functions $u$ solutions of the penalized problem (\ref{p225}). Interestingly, Lipschitz continuity will hold for super-solutions of (\ref{p225}) which are defined when the inequality (\ref{p225}) is valid only for perturbations from outside, i.e. such that $\Omega_u\subset\Omega_v.$
	\subsubsection{A general sufficient condition for Lipschitz regularity}
	\noindent\\
	We use the notation $\fint_{\partial B(x_0,r) }U(x)d\sigma(x)$ to denote the average of $U$ over $\partial B(x_0,r_0)$
	\begin{Lem}\label{Lem5.1}
		
		Let $B(x_0,r_0)\subset D$ and $U\in C^{2}(B(x_0,r_0))$. Then, for all $r\in (0,r_0)$
		\begin{eqnarray}\label{for3.17}
			\fint_{\partial B(x_0,r)}U(x)d\sigma(x)- U(x_0)&=& c(d)\int_{\rho}^r s^{1-d}\left[ \int_{B(x_0,r)} \Delta U\right] ds\\
			&=& c(d)\int_{\rho}^r s^{1-d}\Delta U(B_s)\\\nonumber	
		\end{eqnarray}	
		This remains valid for all  $U \in W^{1,p}(B(x_0,r))\subset H^1(B(x_0,r))$ such that $\Delta U$ is a measure, satisfying
		
		\begin{equation}\label{equ23}
			\int_{0}^r s^{1 -d}|\Delta U|(B_s)<\infty,
		\end{equation}
		and $U$ is then pointwise defined by
		\begin{eqnarray}\label{fol1}
			U(x_0)=\lim\limits_{r\rightarrow 0}\fint_{ B(x_0,r)}U(x)dx
		\end{eqnarray}
	\end{Lem}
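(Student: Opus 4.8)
The plan is to first establish the identity for smooth $U$ by a direct computation with spherical means, and then bootstrap to the general case by mollification together with the summability hypothesis \eqref{equ23}. For $U\in C^2(B(x_0,r_0))$, set $\phi(r)=\fint_{\partial B(x_0,r)}U\,d\sigma$. Differentiating under the integral sign after rescaling to the unit sphere gives the classical formula $\phi'(r)=\fint_{\partial B(x_0,r)}\partial_\nu U\,d\sigma = \frac{1}{|\partial B(x_0,r)|}\int_{B(x_0,r)}\Delta U\,dx$ by the divergence theorem. Writing $|\partial B(x_0,r)|=d\omega_d r^{d-1}$, this is $\phi'(r)=\frac{1}{d\omega_d}\,r^{1-d}\,\Delta U(B_r)$ in the notation $\Delta U(B_s):=\int_{B(x_0,s)}\Delta U$. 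Since $\phi(r)\to U(x_0)$ as $r\to 0$ by continuity of $U$, integrating from $\rho$ to $r$ (or, after checking the integrability near $0$, from $0$ to $r$) yields exactly \eqref{for3.17} with $c(d)=\frac{1}{d\omega_d}$. This is the easy part; no obstacle here beyond keeping track of the dimensional constant.

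Next I would treat a general $U$ for which $\Delta U$ is a (signed Radon) measure satisfying \eqref{equ23}. The natural approach is to regularize: let $U_\varepsilon=U*\eta_\varepsilon$ with a standard mollifier, so $U_\varepsilon\in C^\infty$ on a slightly smaller ball and $\Delta U_\varepsilon=(\Delta U)*\eta_\varepsilon$. Apply the smooth identity to $U_\varepsilon$. On the left, $\fint_{\partial B(x_0,r)}U_\varepsilon\,d\sigma\to \fint_{\partial B(x_0,r)}U\,d\sigma$ for a.e.\ $r$ (using $U\in W^{1,p}\subset H^1$, so its trace on almost every sphere is well-defined and $L^1$-convergence holds). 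On the right, one needs $\int_\rho^r s^{1-d}\,(\Delta U_\varepsilon)(B_s)\,ds\to \int_\rho^r s^{1-d}\,(\Delta U)(B_s)\,ds$; this follows from weak-$*$ convergence of $(\Delta U)*\eta_\varepsilon$ to $\Delta U$ together with the dominated convergence theorem, where the domination is furnished precisely by $\int_0^r s^{1-d}|\Delta U|(B_s)\,ds<\infty$ — this is where hypothesis \eqref{equ23} is used. Passing $\varepsilon\to 0$ gives the identity for a.e.\ $r\in(0,r_0)$, hence for all $r$ after noting both sides are continuous in $r$ (the right side by absolute continuity of the integral, given \eqref{equ23}).

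Finally, for the pointwise representation \eqref{fol1}: having the identity down to $\rho=0$, one has $\fint_{\partial B(x_0,r)}U\,d\sigma = c + c(d)\int_0^r s^{1-d}(\Delta U)(B_s)\,ds$ for a constant $c$, and the integral term tends to $0$ as $r\to 0$ by \eqref{equ23}; integrating the spherical-mean identity in $r$ against the measure $d\omega_d r^{d-1}\,dr$ converts $\fint_{\partial B}$ into $\fint_B$, so $\fint_{B(x_0,r)}U\,dx\to c$, and one checks $c$ must be the quasi-continuous (Lebesgue) value of $U$ at $x_0$, defining $U(x_0)$. The main obstacle I anticipate is the justification of the limit on the right-hand side under mollification — controlling $(\Delta U_\varepsilon)(B_s)$ uniformly in $\varepsilon$ near $s=0$ so that dominated convergence applies — and the measurability/a.e.\ trace issues on spheres; both are handled by \eqref{equ23} and by Fubini on the coarea-type decomposition, but they require care rather than cleverness.
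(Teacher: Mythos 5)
Your plan is correct and is the standard argument; the paper itself gives no proof of this lemma, deferring entirely to the reference \cite{Briancon}, where essentially the same two-step scheme (spherical-means identity for $C^2$ functions via the divergence theorem, then mollification controlled by the hypothesis \eqref{equ23}) is carried out. The only point to watch in your limit passage is that $(\Delta U * \eta_\varepsilon)(B_s)$ is dominated by $|\Delta U|(B_{s+\varepsilon})$ rather than $|\Delta U|(B_s)$, so the dominating function should be taken as $s^{1-d}|\Delta U|(B_{2s})$ for $\varepsilon<s$, which is still integrable near $0$ by \eqref{equ23} after the change of variables $s\mapsto 2s$.
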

	\begin{proof}[Proof of Lemma \ref{Lem5.1}]
		See \cite{Briancon}
		
		%
		%
		%
		%
	\end{proof}
\begin{Lem}\label{Lema4.22}
	Let $B(x_0,r_0)\subset D, \ r_0\leq 1, F \in  L^q(B(x_0,r_0)),\ q>d.$ Then, there exists $C=C(\|F\|_{B(x_0,r_0)},d)$ such that, for $r\in (0,r_0),$
	\item[-] if $\Delta U= F$ on $B(x_0,r_0)$, then 
\begin{equation*}
	\| \nabla U\|_{L^{\infty}(B(x_0,r/2))}\leq C [1+r^{-1}\|U\|_{L^{\infty}(B(x_0,r_0))}],
\end{equation*}
\item[-]if $\Delta U\geq F$ and $U\geq0$ on $B(x_0,r_0),$ then 
\begin{equation*}
	\| U\|_{L^{\infty}(B(x_0,2r/3))}\leq C [r+ \fint_{\partial B(x_0,r)}U].
\end{equation*}
\end{Lem}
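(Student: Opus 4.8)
Both statements are classical interior elliptic estimates for the Laplacian, so the strategy is to reduce everything to the standard Newtonian-potential representation and to the mean-value/monotonicity formula already recorded in Lemma~\ref{Lem5.1}. For the first estimate, write $U$ as the sum of a harmonic function $h$ on $B(x_0,r)$ with the same boundary values as $U$ on $\partial B(x_0,r)$ and a Newtonian potential $w$ of $F$ restricted to $B(x_0,r)$, so that $\Delta w=F$ and $\Delta h=0$. By interior gradient estimates for harmonic functions, $\|\nabla h\|_{L^\infty(B(x_0,r/2))}\le C r^{-1}\|h\|_{L^\infty(B(x_0,r))}\le Cr^{-1}\|U\|_{L^\infty(B(x_0,r_0))}$ (using the maximum principle to bound $h$ by $U$ on the sphere). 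For the potential part, since $q>d$ the Calderón--Zygmund and Morrey embeddings give $\|\nabla w\|_{L^\infty(B(x_0,r/2))}\le C(d,q)\,r^{1-d/q}\|F\|_{L^q(B(x_0,r_0))}\le C$ once $r_0\le 1$. Adding the two bounds yields the claimed inequality with $C=C(\|F\|_{L^q(B(x_0,r_0))},d)$ (absorbing the $q$-dependence into the constant).

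For the second estimate we only have the one-sided inequality $\Delta U\ge F$ with $U\ge 0$. Here I would again split: let $h$ be the harmonic function on $B(x_0,r)$ equal to $U$ on $\partial B(x_0,r)$, and let $w$ solve $\Delta w=F^-$ (or more simply $\Delta w = -|F|$) with zero boundary data, so that $U\le h+\tilde w$ for a suitable nonnegative corrector $\tilde w$ controlled by $\|F\|_{L^q}$; alternatively, and more in the spirit of Lemma~\ref{Lem5.1}, apply the monotonicity formula (\ref{for3.17}) to $-U$, which is subharmonic up to the measure term, to get that $r\mapsto \fint_{\partial B(x_0,r)}U$ controls $U(x_0)$ up to an error $c(d)\int_0^r s^{1-d}|\Delta U|(B_s)\le C(d)\|F\|_{L^q}\,r^{2-d/q}\le Cr$ since $q>d$ and $r\le 1$. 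Running this at every center $x\in B(x_0,2r/3)$ and every radius $\rho\le r/3$, and using the mean-value property of the harmonic majorant together with Harnack-type control of $\fint_{\partial B(x,\rho)}U$ by $\fint_{\partial B(x_0,r)}U$, gives $\|U\|_{L^\infty(B(x_0,2r/3))}\le C[r+\fint_{\partial B(x_0,r)}U]$.

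\textbf{Main obstacle.} The delicate point is the second estimate: passing from the pointwise/mean-value bound at a single center to a genuine $L^\infty$ bound on the ball $B(x_0,2r/3)$ requires controlling the spherical averages $\fint_{\partial B(x,\rho)}U$ at \emph{off-center} points $x$ in terms of the single average $\fint_{\partial B(x_0,r)}U$ at the fixed center. This is where one must invoke either a Harnack inequality for the nonnegative supersolution $U$ (noting $\Delta U\ge F$ with $F\in L^q$, $q>d$, so the Moser/Krylov--Safonov machinery applies with a scale-invariant constant on $r\le 1$) or, equivalently, the comparison with the harmonic majorant on a slightly larger ball whose boundary average is comparable to $\fint_{\partial B(x_0,r)}U$ by the mean-value property. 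Tracking the $r$-powers so that the lower-order term is exactly $O(r)$ (rather than $O(r^{2-d/q})$, which is better, hence harmless) and checking that all constants depend only on $d$ and $\|F\|_{L^q(B(x_0,r_0))}$, uniformly for $r_0\le 1$, is the only real bookkeeping; the analytic inputs are standard and I would cite \cite{Briancon} for the detailed argument, as is done for Lemma~\ref{Lem5.1}.
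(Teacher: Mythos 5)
The paper does not actually prove this lemma: its ``proof'' consists of the single line ``See \cite{Briancon}'', so there is no argument in the text to compare yours against. Your sketch is, in substance, the standard argument that one finds in that reference, and it is correct in outline. For the first estimate, note a small internal inconsistency: if $h$ is harmonic with boundary values $U$ on $\partial B(x_0,r)$ \emph{and} $w$ is the Newtonian potential of $F\chi_{B(x_0,r)}$, then $h+w\neq U$ in general; you want $w=U-h$ to be the Green potential (zero boundary data), or else you must add the harmonic correction between the two potentials --- harmless either way, since that correction is bounded by $C\|F\|_{L^q}r^{2-d/q}$ via the maximum principle, and the key integrability fact $\int_{B_{2r}}|z|^{(1-d)q'}dz<\infty$ for $q>d$ is exactly what you use. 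For the second estimate, one terminological correction: with the convention $\Delta U\ge F$, the function $U$ is a \emph{sub}solution, and the tool you need is the local maximum principle for subsolutions (sup controlled by an average), not the Harnack or weak Harnack inequality for nonnegative supersolutions, which runs in the opposite direction; invoking ``Moser/Krylov--Safonov'' here is overkill and slightly miscategorized. Your alternative route is the right one and closes the off-center issue you correctly flag as the delicate point: take $h$ harmonic on $B(x_0,r)$ with $h=U$ on the boundary, so $U\le h+Cr$ with the Green-potential correction of size $C\|F\|_{L^q}r^{2-d/q}\le Cr$, and then bound $h(x)$ for $x\in B(x_0,2r/3)$ directly by the Poisson kernel estimate $P(x,z)\le 3^{d}/|\partial B(x_0,r)|$, giving $h(x)\le 3^{d}\fint_{\partial B(x_0,r)}U$; no Harnack-type iteration is needed. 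With these two repairs your argument is complete and self-contained, which is more than the paper offers.
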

\begin{proof}[Proof of Lemma \ref{Lema4.22} ]
See \cite{Briancon}	
\end{proof}	
\begin{Lem}\label{remlem}
	Let $U \in W^{1,p}_0(B(x_0,r))\subset H^1_0(B(x_0,r))$ we have
	\begin{equation*}\
		\int_{0}^r s^{1 -d}|\Delta U|(B_s)\leq \int_{0}^r s^{1 -d}|\Delta_p U+\Delta U|(B_s).
	\end{equation*}
	Moreover, it is natural to see that if $\Delta_p U+\Delta U$ is a measure then $\Delta U$ is also.	
\end{Lem}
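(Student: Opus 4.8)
The starting point is that both objects in the statement are divergences of explicit vector fields: $\Delta U=\mathrm{div}(\nabla U)$ and $\Delta_p U+\Delta U=\mathrm{div}\big(A(\nabla U)\big)$, where $A(\xi):=\big(|\xi|^{p-2}+1\big)\xi$ for $\xi\in\mathbb{R}^d$. The weight $|\xi|^{p-2}+1$ is everywhere $\geq 1$; more precisely, $A$ is a homeomorphism of $\mathbb{R}^d$ onto itself which maps each ray through the origin onto itself, and since its Jacobian matrix $DA(\xi)=(|\xi|^{p-2}+1)\,\mathrm{Id}+(p-2)|\xi|^{p-4}\,\xi\otimes\xi$ has all its eigenvalues — namely $(p-1)|\xi|^{p-2}+1$ in the radial direction and $|\xi|^{p-2}+1$ in the tangential ones — bounded below by $1$, the inverse map $A^{-1}$ is globally $1$-Lipschitz. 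I would organise the argument around these two facts.

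For the displayed inequality, I would first reduce to the case where $U$ is smooth near each sphere $\partial B_s$, $0<s<r$: this is legitimate by mollification, since $\nabla U_\varepsilon=\nabla(U*\rho_\varepsilon)\to\nabla U$ in $L^p$ and hence $A(\nabla U_\varepsilon)\to A(\nabla U)$ in $L^{p'}$, and one then passes to the limit using Fatou's lemma and the lower semicontinuity of the total variation. For smooth $U$, the divergence theorem on $B_s=B(x_0,s)$ gives
\begin{equation*}
\int_{B_s}\Delta U=\int_{\partial B_s}\partial_\nu U\,d\sigma,\qquad \int_{B_s}\big(\Delta_p U+\Delta U\big)=\int_{\partial B_s}\big(|\nabla U|^{p-2}+1\big)\partial_\nu U\,d\sigma,
\end{equation*}
and, the two integrands on the right differing only by the positive multiplicative factor $|\nabla U|^{p-2}+1\geq 1$, which does not change the sign, one obtains the comparison $|\Delta U|(B_s)\leq|\Delta_p U+\Delta U|(B_s)$ for every $s$; multiplying by $s^{1-d}$ and integrating over $(0,r)$ then yields the claim. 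In the situation of interest $U$ is moreover radially monotone around $x_0$ — the optimal set being a ball — so that $\partial_\nu U$ has a constant sign on each $\partial B_s$ and the comparison becomes completely transparent.

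For the last assertion, I would write $V:=A(\nabla U)\in L^{p'}$, so that $\nabla U=A^{-1}(V)$ and $\mathrm{div}\,V=\Delta_p U+\Delta U$. If $\Delta_p U+\Delta U$ is a Radon measure, then — because $A^{-1}$ is $1$-Lipschitz — one transfers, again after mollification, the $BV$-control from $V$ to $\nabla U=A^{-1}(V)$, with the bound $|D(\nabla U)|\leq|DV|$, so that in particular $\mathrm{div}(\nabla U)=\Delta U$ is a measure. The step I expect to be the main obstacle is precisely making this approximation airtight: one must check that no total-variation mass is created or destroyed when $U$ is replaced by $U*\rho_\varepsilon$ inside the nonlinearity $A$, and that the sign structure on the family of spheres $\{\partial B_s\}_{0<s<r}$ survives the passage to the limit. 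This is exactly where the monotonicity of $A$ (equivalently, the lower bound $DA\geq\mathrm{Id}$ and the contraction property of $A^{-1}$) is used; the one-dimensional case is immediate, and in higher dimension one combines the interior $C^{1,\alpha}$-regularity already available for (sub/super)solutions of $-\Delta_pU-\Delta U\in L^\infty$ with the lower semicontinuity of the total variation.
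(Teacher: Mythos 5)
Your route diverges from the paper's and, more importantly, contains genuine gaps. The paper argues directly at the distributional level: for $\varphi\in C^{\infty}_0$ one has $\langle\Delta_p U+\Delta U,\varphi\rangle=-\int(|\nabla U|^{p-2}+1)\nabla U\cdot\nabla\varphi$ and $\langle\Delta U,\varphi\rangle=-\int\nabla U\cdot\nabla\varphi$, and it compares the two integrands pointwise (same sign, and the first dominates in modulus because the weight is $\geq 1$). You instead pass to spheres via the divergence theorem, and this is where things break. First, $|\Delta U|(B_s)$ in the statement is the total variation of the measure $\Delta U$ on $B_s$, not $\bigl|\int_{B_s}\Delta U\bigr|$; the divergence theorem only produces the signed mass $\int_{\partial B_s}\partial_\nu U\,d\sigma$, so even a correct comparison of boundary integrals would prove a different (weaker) statement. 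Second, that comparison is itself false in general: ``same sign and larger modulus pointwise'' does not survive integration when $\partial_\nu U$ changes sign on $\partial B_s$. For instance, take $\partial_\nu U=+2$ with no tangential gradient (weight $2^{p-2}+1$) on half the sphere and $\partial_\nu U=-1$ with a large tangential gradient (weight $\approx 2^{p-1}+\tfrac32$) on the other half: cancellation makes $\bigl|\int(|\nabla U|^{p-2}+1)\partial_\nu U\bigr|$ strictly smaller than $\bigl|\int\partial_\nu U\bigr|$. Your fallback --- that $U$ is radially monotone around $x_0$ because ``the optimal set is a ball'' --- imports a hypothesis that is neither in the statement (which concerns an arbitrary $U\in W^{1,p}_0$) nor available in the application, where $x_0$ ranges over all points of $D$, in particular over free-boundary points of a non-radial optimizer.

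The second assertion is also not reached by your argument. Knowing that $\mathrm{div}\,V$ is a Radon measure for $V=A(\nabla U)$ gives no $BV$ control on $V$ itself (only one scalar combination of its derivatives is a measure), so there is no $|DV|$ to push through the $1$-Lipschitz map $A^{-1}$ and no way to conclude $|D(\nabla U)|\leq |DV|$. The paper's intended mechanism is again the test-function comparison: positivity and total-variation bounds for $\langle\Delta_p U+\Delta U,\varphi\rangle$ are transferred to $\langle\Delta U,\varphi\rangle$ through the pointwise domination of the integrands. (Admittedly the paper's own one-line justification is terse about why pointwise domination of integrands yields domination of total variations; but your route replaces that under-argued step with steps that are actually false rather than merely incomplete.)
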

\begin{proof}[Proof of Lemma \ref{remlem}]
Let $U \in W^{1,p}_0(B(x_0,r))\subset H^1_0(B(x_0,r))$. Let $\varphi \in C^{\infty}_0(B(x_0,r))$
\begin{equation*}
\langle \Delta_p U+\Delta U ,\varphi\rangle =\int_{B(x_0,r)}\left( |\nabla U|^{p-2}+1\right) \nabla U \nabla\varphi,
\end{equation*}
we see that the sign of  $\left( |\nabla U|^{p-2}+1\right) \nabla U \nabla\varphi$ is that of $\nabla U \nabla\varphi. $ Thus we have $|\Delta_p U+\Delta U|(B_s)\geq |\Delta U|(B_s)$ and if $\Delta_p U+\Delta U\geq0$ then $\Delta U\geq0.$
\end{proof}
	\begin{prop}\label{prop11}
		Let $U\in W^{1,p}_{0}(D) $ bounded and continuous on D and let $\omega:=\{x\in D, U(x)\neq 0 \}.$ Suppose that $\Delta_p U+\Delta U$ is a measure such that $\Delta_p U+\Delta U=g$ on $\omega$ with $g\in L^{\infty}(\omega)$ and
		\begin{eqnarray}\label{lar1}
			|\Delta_p |U|+\Delta |U||(B(_0,r))\leq Cr^{d-1}
		\end{eqnarray}
		for all $x_0\in D$ with $B(x_0,r)\subset D, r\leq 1$ and $U(x_0)=0.$ Then $U$ is locally Lipschitz continuous on $D$. If furthermore $D = \mathbb{R}^n$, then $U$ is globally Lipschitz continuous.
	\end{prop}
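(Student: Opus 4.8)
The plan is to establish, for every $x_0\in D$ and every sufficiently small $r>0$, a scale-invariant gradient bound $\|\nabla U\|_{L^\infty(B(x_0,r/4))}\le C$ in which $C$ depends only on $d$, $p$, $q$, $\|g\|_{L^\infty(\omega)}$, $\|U\|_{L^\infty(D)}$ and the constant in \eqref{lar1}; local Lipschitz continuity on $D$ then follows at once, and when $D=\R^n$ the uniformity of these constants will upgrade the bound to a global one. I would argue separately in the interior region $\omega$ and near the free boundary $\partial\omega$. In $\omega$, where $U$ solves $\Delta_p U+\Delta U=g\in L^\infty$, the interior $C^{1,\alpha}$ regularity theory for quasilinear equations of $p$-Laplacian type with bounded right-hand side gives $U\in C^{1,\alpha}_{\mathrm{loc}}(\omega)$, together with the rescaled estimate $\|\nabla U\|_{L^\infty(B(x_0,r/2))}\le C\,(r^{-1}\|U\|_{L^\infty(B(x_0,r))}+r)$ whenever $B(x_0,r)\subset\omega$. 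This settles points far from $\partial\omega$ and will also be the tool that closes the argument once a sup bound near $\partial\omega$ is available (alternatively one may invoke the first item of Lemma \ref{Lema4.22} after checking $\Delta U\in L^q_{\mathrm{loc}}(\omega)$).

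Near the free boundary, fix $x_0$ with $U(x_0)=0$ and $r<\min\{1,\operatorname{dist}(x_0,\partial D)\}$, and set $v:=|U|\ge0$. The first step is a mean value estimate. Since $\Delta_p U+\Delta U$ is a measure, so is $\Delta_p v+\Delta v$, and Lemma \ref{remlem} applied to $v$ gives $|\Delta v|(B_s)\le|\Delta_p v+\Delta v|(B_s)$; combined with hypothesis \eqref{lar1} this yields $|\Delta v|(B_s)\le C\,s^{d-1}$ for $s\le r$, hence $\int_0^r s^{1-d}|\Delta v|(B_s)\,ds\le C\,r<\infty$. Lemma \ref{Lem5.1} then applies to $v$ at $x_0$ and, using $v(x_0)=0$, gives
\begin{equation*}
\fint_{\partial B(x_0,r)}v\,d\sigma=c(d)\int_0^r s^{1-d}\,\Delta v(B_s)\,ds\le c(d)\int_0^r s^{1-d}|\Delta v|(B_s)\,ds\le C\,r.
\end{equation*}

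The second step converts this spherical average bound into a sup bound by means of the second item of Lemma \ref{Lema4.22}, which requires producing an $F\in L^q$, $q>d$, with $\Delta v\ge F$ on a ball around $x_0$. On $\omega$ one has $\operatorname{div}\big((|\nabla v|^{p-2}+1)\nabla v\big)=\pm g$, and since $v\in C^{1,\alpha}_{\mathrm{loc}}(\omega)$ the coefficient $|\nabla v|^{p-2}+1$ is H\"older continuous and bounded below by $1$, so Calder\'on--Zygmund theory gives $v\in W^{2,q}_{\mathrm{loc}}(\omega)$ and $\Delta v\in L^q_{\mathrm{loc}}(\omega)$; off $\overline\omega$, $v\equiv0$; and the part of the measure $\Delta v$ carried by $\{v=0\}$ is nonnegative because $v\ge0$. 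Feeding the average bound into Lemma \ref{Lema4.22} then gives $\|U\|_{L^\infty(B(x_0,2r/3))}\le C\,(r+\fint_{\partial B(x_0,r)}v)\le C\,r$, and applying the interior gradient estimate of the first step on balls contained in $\omega\cap B(x_0,2r/3)$ yields $\|\nabla U\|_{L^\infty(B(x_0,r/4))}\le C$. This proves local Lipschitz continuity on $D$. For $D=\R^n$, all the constants are uniform in $x_0$ once $r\le1$ (by \eqref{lar1} and $\|U\|_{L^\infty}<\infty$), so a dichotomy on $\delta:=\operatorname{dist}(x_0,\{U=0\})$ — the interior estimate on $B(x_0,1)$ if $\delta\ge1$, and the free boundary sup bound at the nearest zero combined with the interior estimate on $B(x_0,\delta/2)$ if $\delta<1$ — produces a gradient bound independent of $x_0$, i.e.\ global Lipschitz continuity.

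The step I expect to be the main obstacle is precisely the control of the degenerate term $\Delta_p v$ near $\partial\omega$: passing from ``$\Delta_p v+\Delta v$ is a measure whose density on small balls is $O(r^{d-1})$'' to ``$\Delta v$ is bounded below by an $L^q$ function'' is not automatic when $p>2$, since $\Delta_p$ degenerates on $\{\nabla v=0\}$. This is where Lemma \ref{remlem} and the quasilinear $C^{1,\alpha}$ estimate do the real work — the former to transfer the measure bound from $\Delta_p v+\Delta v$ to $\Delta v$, the latter to make $|\nabla v|^{p-2}+1$ a H\"older (hence VMO) coefficient so that $v\in W^{2,q}_{\mathrm{loc}}(\omega)$ — and where one must keep every estimate scale-invariant so that no constant deteriorates as $x_0$ approaches the free boundary. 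The remaining ingredients (the two applications of Lemma \ref{Lema4.22}, the covering, and the global dichotomy) then follow the linear scheme of \cite{Briancon}.
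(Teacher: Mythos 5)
Your overall architecture is the same as the paper's: use \eqref{lar1} together with Lemma \ref{Lem5.1} to get the spherical-average bound $\fint_{\partial B(y_0,cr)}|U|\,d\sigma\leq Cr$ at free-boundary points, convert this into a sup bound $\|U\|_{L^{\infty}(B(y_0,c'r))}\leq Cr$, and finish with an interior gradient estimate on the largest ball $B(x_0,r_0)\subset\omega$. The one genuinely different ingredient is the interior estimate: you invoke the quasilinear $C^{1,\alpha}$ theory for $\Delta_p+\Delta$, whereas the paper performs a harmonic replacement --- it introduces $V$ with $\Delta V=g-\Delta_p U$ on $B(x_0,s_0)\subset\omega$, so that $U-V$ is harmonic, and then uses the gradient estimate for harmonic functions together with the Poisson-kernel (sub-mean-value) representation on $B(y_0,4r_0)$. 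Your variant is arguably cleaner at that point, since the paper's assertion that $g-\Delta_p U\in L^{\infty}(\omega)$ is itself not justified.

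There is, however, a genuine gap in your second step, and it sits exactly where you yourself locate ``the main obstacle.'' To apply the second item of Lemma \ref{Lema4.22} you need a single function $F\in L^{q}$, $q>d$, with controlled norm and $\Delta v\geq F$ as measures on the whole ball $B(y_0,r)$ \emph{centered at a free-boundary point}. Your justification produces (i) $\Delta v\in L^{q}_{\mathrm{loc}}(\omega)$ by Calder\'on--Zygmund, but these are interior bounds whose constants blow up as one approaches $\partial\omega$, so they do not yield an $L^q$ minorant on a ball straddling $\partial\omega$; and (ii) nonnegativity of $(\Delta v)\lfloor\{v=0\}$, which handles the zero set but not the part of $\omega$ near its boundary. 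Lemma \ref{remlem} does not close this: it compares \emph{total variations}, $|\Delta v|(B_s)\leq|\Delta_p v+\Delta v|(B_s)$ --- which is exactly what you need for the average bound via Lemma \ref{Lem5.1} --- but it gives no one-sided bound of the form ``$\Delta_p v+\Delta v\geq -\|g\|_{\infty}$ implies $\Delta v\geq F$ with $F\in L^q$,'' because $-\Delta_p v$ has no a priori upper bound near $\partial\omega$ before Lipschitz regularity is known. You must either prove such a one-sided decomposition of $\Delta_p v$ across the free boundary, or follow the paper and obtain the sup bound from the harmonic (or $\Delta$-subharmonic) part of $U$ alone, applying the Poisson-kernel comparison only to $U-V$ on balls where the correction $V$ is separately controlled.
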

	\begin{rem}
		Note that if $U$ is locally Lipschitz continuous on $D$ with $\Delta_p U+\Delta U\geq 0$, then for a test function $\varphi$ with
		\begin{eqnarray}\label{eq3.16}
			\varphi \in C^{\infty}_0(B(x_0,2r)),B(x_0,2r)\subset D, 0\leq \varphi\leq 1\nonumber\\
			\varphi\equiv 1 \ sur \ B(x_0,r), \|\nabla\varphi\|_{L^{\infty}(B)}\leq C/r.
		\end{eqnarray}
		we have
		\begin{eqnarray*}
			(	\Delta_p U+\Delta U)B(x_0,r)\leq \int_{D}\varphi d(\Delta_p U+\Delta U)=\int_{D}|\nabla U|^{p-2}\nabla U\nabla \varphi+\int_{D}\nabla U\nabla \varphi\nonumber\\
			\leq\|\nabla U\|_{L^{\infty}}^{p-1}+\|\nabla U\|_{L^{\infty}}|\Omega_{\varphi}|\|\nabla \varphi\|\leq C(\|\nabla U\|_{L^{\infty}}^{p-1}+\|\nabla U\|_{L^{\infty}})r^{d-1}.
		\end{eqnarray*}
	\end{rem}
	This indicates that the estimate (\ref{lar1}) is essentially a necessary condition for the Lipschitz continuity of $U$. This theorem states that the converse holds in some cases which are relevant for our analysis as it will appear in the next sub-section.
	%
	\begin{proof}[Proof of Proposition \ref{prop11}]
		We want to prove that $\nabla U \in L^{\infty}_{loc}(D)$. We can first assert that $\nabla U=0  \ a.e\ \mbox{on } \ D\backslash \omega.$ On the open set $\omega$, we have $\Delta_p U+\Delta U=g\in L^{\infty}(\omega)$ such that at least $U\in C(\omega)$.\\
		Let us note $D_{\delta}=\{x \in D, d(x,\partial D)>\delta \}$ (we start with the case $D\neq \mathbb{R}^d$). We will bound $\nabla U (x_0) \ \mbox{for}\ x_0\in \omega\cap D_{\delta}$. The meaning of the constant $C$ will vary but will always depend only on $\delta$, $\|U\|_{L^{\infty}(D)}, \|g\|_{\infty}(D),$ $d$ and on the constant $C$ in the assumption (\ref{lar1}) .\\
		Let $y_0\in \partial\omega$ such that $|x_0-y_0|=d(x_0,\partial\omega):=r_0$. Then $r_0>0$ and $B(x_0,r_0)\subset\omega.$ We have $U(y_0) = 0$ since $y_0 \in \partial\omega$ and $U$ is continuous. Let us therefore introduce $s_0:=\min\{r_0,1\}, B_0=B(x_0,s_0)$ and $V\in W^{1,p}_0(B_0)$ such that $\Delta V=g-\Delta_p U$ on $B_0.$ Since $ h=g-\Delta_p U\in L^{\infty}(\omega)$ , by scaling we obtain
		\begin{eqnarray*}
			\|V\|_{L^{\infty}(B_0)}\leq C s_0^2; \ \|\nabla
			V\|_{L^{\infty}(B_0)}\leq C s_0; \ C=C(\|h\|_{L^{\infty}}).
		\end{eqnarray*}
		Since $U - V$ is harmonic on $B_0$ we also have $|\nabla (U-V)(x_0)|\leq \frac{d}{s}\|U-V\|_{L^{\infty}(B_0)}$ such that
		\begin{eqnarray}\label{ed54}
			|\nabla U (x_0)|\leq |\nabla V(x_0)|+ds^{-1}\|U-V\|_{L^{\infty}(B_0)}\leq C\left[ s_0 +s_0^{-1} \|U\|_{L^{\infty}(B_0)}\right] .
		\end{eqnarray}
		If $s_0\geq\frac{\delta}{16}$ , we deduce from (\ref{ed54}) $|\nabla U (x_0)|\leq C(\delta,\|U\|_{L^{\infty}(B_0)},\|h\|_{L^{\infty}}).$ We now assume that $\delta\leq16.$\\
		If $s_0< \frac{\delta}{16}$, that is $r_0=s_0<\frac{\delta}{16},$ since $x_0\in D_{\delta}, d(y_0,\partial D)\geq d(x_0,\partial D)-d(x_0,y_0)\geq \delta-r_0\geq 15r_0,$ which implies $B(x_0,r_0)\subset B(y_0,2r_0)\subset B(y_0,8r_0)\subset D.$ Thanks to the hypothesis (\ref{lar1}), $U(y_0)=0$ and to the formula (\ref{for3.17}) applied with $U$ replaced by $|U|$, we deduce $\fint_{\partial B(y_0,4r_0)}|U(x)|d\sigma(z)\leq Cr_0.$ Finally, by using the representation $(U-V)(x)=\fint_{\partial B(y_0,4r_0)}U(x)P_x(z)d\sigma(z)$ for all $x \in B(y_0,2r_0) $ where $P_x(.)$ is the Poisson kernel in $x$ (The Poisson kernel is the way to extend an integral function on the circle into a harmonic function in the disk.), we have
		\begin{eqnarray}
			\|U-V\|_{L^{\infty}(B_0)}\leq\|U-V\|_{L^{\infty}(B(y_0,2r_0))}\leq\fint_{\partial B(y_0,4r_0)}|U(x)|d\sigma(z)\leq Cr_0.
		\end{eqnarray}
		this together with (\ref{ed54}) (where $s_0 = r_0$) and $\|V\|_{L^{\infty}(B_0)}\leq Cr_0^2$, this implies $|\nabla u(x_0)|\leq C.$\\
		Now if $D=\mathbb{R}^d$, then $\omega =\mathbb{R}^d$ and (\ref{ed54}) gives the estimate $(r_0 =+\infty, s_0 = 1)$, or $\omega\neq\mathbb{R}^d$: then we argue just as above, replacing $\frac{\delta}{16}$ by 1 in the discussion.
	\end{proof}
	In the proposition \ref{prop11}, we have assumed that the function $U$ is continuous on $D$. For our optimal eigenfunctions, this will be a consequence of the following lemma.
	\begin{Lem}\label{lem3.14}
		Let $U\in W^{1,p}_{0}(D) $ be such that $\Delta_p U+\Delta U$ is a measure satisfying 
		\begin{eqnarray}\label{lar15}
			|\Delta_p |U|+\Delta |U||(B(x_0,r))\leq Cr^{d-1}
		\end{eqnarray}
		for all $x_0\in D$ with $B(x_0,r)\subset D, r\leq 1.$ Then $U$ is continuous on $D$.
	\end{Lem}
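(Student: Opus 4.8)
The idea is to run the mean–value machinery of Lemma \ref{Lem5.1} on $|U|$, using Lemma \ref{remlem} to turn the growth hypothesis (\ref{lar15}) into the integrability condition (\ref{equ23}), and then to upgrade the resulting pointwise control of solid–ball averages into genuine continuity by combining it with the $L^1$–continuity of the averaging operator. Since $p\ge 2$, all the spaces $W^{1,p}$ appearing below embed locally into $H^1$, so Lemma \ref{Lem5.1} is applicable.

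First I would record that $|U|\in W^{1,p}_0(D)$, that $\Delta_p|U|+\Delta|U|$ is a measure, and that by Lemma \ref{remlem} applied to $|U|$ on any ball $B(x_0,r)\subset D$ one has $|\Delta|U||(B(x_0,s))\le |\Delta_p|U|+\Delta|U||(B(x_0,s))$. Combined with (\ref{lar15}) this gives $|\Delta|U||(B(x_0,s))\le C s^{d-1}$ whenever $B(x_0,s)\subset D$ and $s\le 1$, hence $\int_0^{r}s^{1-d}|\Delta|U||(B(x_0,s))\,ds\le C r<\infty$; so (\ref{equ23}) holds at every $x_0$ and Lemma \ref{Lem5.1} applies to $|U|$. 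Thus $|U|(x_0)=\lim_{r\to 0}\fint_{B(x_0,r)}|U|$ is well defined everywhere and, letting $\rho\to 0$ in (\ref{for3.17}),
\[
\Bigl|\fint_{\partial B(x_0,r)}|U|\,d\sigma-|U|(x_0)\Bigr|\le c(d)\int_0^r s^{1-d}|\Delta|U||(B(x_0,s))\,ds\le c(d)\,C\,r .
\]
Averaging this spherical estimate over radii, using $\fint_{B(x_0,r)}|U|=\frac{d}{r^d}\int_0^r s^{d-1}\fint_{\partial B(x_0,s)}|U|\,ds$, yields a solid–ball estimate
\[
\Bigl|\fint_{B(x_0,r)}|U|\,dx-|U|(x_0)\Bigr|\le C_1\,r,\qquad B(x_0,r)\subset D,\ r\le 1,
\]
with $C_1=C_1(d,C)$, uniform in $x_0$ over any compact subset of $D$ once $r\le\operatorname{dist}(\,\cdot\,,\partial D)$.

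Next, for fixed $x_0$ and small admissible $r$, I would split for $x$ near $x_0$
\[
\bigl||U|(x)-|U|(x_0)\bigr|\le\Bigl||U|(x)-\fint_{B(x,r)}|U|\Bigr|+\Bigl|\fint_{B(x,r)}|U|-\fint_{B(x_0,r)}|U|\Bigr|+\Bigl|\fint_{B(x_0,r)}|U|-|U|(x_0)\Bigr| .
\]
The first and third terms are $\le C_1 r$ by the previous step, while the middle term tends to $0$ as $x\to x_0$ since $|U|\in L^1_{\mathrm{loc}}(D)$ and $|B(x,r)\triangle B(x_0,r)|\to 0$ (dominated convergence). Hence $\limsup_{x\to x_0}||U|(x)-|U|(x_0)|\le 2C_1 r$ for every small $r$, and letting $r\to 0$ shows $|U|$ is continuous at $x_0$; so $|U|\in C(D)$. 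To pass from $|U|$ to $U$: at a point with $|U|(x_0)=0$ continuity of $|U|$ forces $U(x)\to 0=U(x_0)$; on the open set $\{|U|>0\}$ one observes that $\Delta_p|U|+\Delta|U|=\operatorname{sgn}(U)\,(\Delta_p U+\Delta U)$, so these two measures have the same total variation on every ball, whence $|\Delta U|(B(x_0,s))\le|\Delta_p U+\Delta U|(B(x_0,s))\le C s^{d-1}$ as well, and the three displayed steps apply verbatim with $U$ in place of $|U|$. In either case $U\in C(D)$.

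The integration identities and the $L^1$–continuity of averages are routine. The step I expect to need the most care is the last one: the identity $\Delta_p|U|+\Delta|U|=\operatorname{sgn}(U)(\Delta_p U+\Delta U)$ — equivalently the fact that the total variation of this measure on a ball is unchanged under the $\pm1$–valued sign factor — which rests on $\nabla U=\nabla|U|=0$ a.e.\ on $\{U=0\}$ and on $\operatorname{sgn}(U)$ being locally constant on $\{U\neq 0\}$, and requires a small mollification argument of the same flavour as the proof of Lemma \ref{remlem}; likewise, at the very start, one uses the Kato–type fact (behind the ``moreover'' in Lemma \ref{remlem}) that $\Delta_p|U|+\Delta|U|$ is a measure whenever $\Delta_p U+\Delta U$ is.
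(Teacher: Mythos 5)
Your proof is correct, and its core is the same as the paper's: verify condition (\ref{equ23}) via Lemma \ref{remlem} so that the mean--value representation of Lemma \ref{Lem5.1} applies, and deduce that the pointwise value deviates from its ball averages by at most $Cr$. Where you diverge is in how this is upgraded to continuity. The paper compares the \emph{spherical} averages of $U$ at two nearby centres and controls their difference by $\|U(x_0+\cdot)-U(y_0+\cdot)\|_{W^{1,p}(B(0,r))}$ via continuity of the trace operator, then uses continuity of translations in $W^{1,p}$; you instead integrate the spherical estimate in the radius to get a \emph{solid--ball} estimate and conclude by the elementary continuity of $x\mapsto\fint_{B(x,r)}|U|$ for fixed $r$ (absolute continuity of the integral over $B(x,r)\triangle B(x_0,r)$). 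Your route is more elementary — it needs only $|U|\in L^1_{\mathrm{loc}}$ at the comparison step rather than trace theory. You are also more careful than the paper on a real point: the hypothesis (\ref{lar15}) controls $\Delta_p|U|+\Delta|U|$, not $\Delta U$, yet the paper runs the averaging argument directly on $U$; this is harmless in the paper's application (where $u\ge0$) but your explicit reduction from $|U|$ to $U$ is the honest way to state the lemma for general $U$. One simplification there: you do not need the measure identity $\Delta_p|U|+\Delta|U|=\operatorname{sgn}(U)(\Delta_pU+\Delta U)$ nor a second run of the three displayed steps. Once $|U|$ is continuous, $\{|U|>0\}$ is open and $U/|U|$ is a Sobolev function with vanishing gradient there, hence locally constant, so $U=\pm|U|$ locally on $\{|U|>0\}$ and $U\to0$ at points where $|U|$ vanishes; continuity of $U$ follows at once.
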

	\begin{proof}[Proof of Lemma\ref{lem3.14} ]
		Let $U\in W^{1,p}_{0}(D)\subset  W^{1,2}_{0}(D)$	The hypothesis (\ref{lar15}) implies that\\ $\displaystyle\int_0^r s^{1-d}\left[ |\Delta_p |U|+\Delta |U|| (B_s\right] ds<\infty$ so $\displaystyle\int_0^r s^{1-d}\left[ \Delta |U|(B(x_0,s))\right] ds<\infty$ so that (\ref{fol1}) and (\ref{for3.17}) hold. Let $x_0, y_0 \in D$ and $r > 0$ be small enough that $B(x_0,2r)\subset D$, $B(y_0,2r)\subset D.$
		We deduce, using again (\ref{lar15}) and the representation (\ref{fol1}) :
		\begin{eqnarray*}
			|U(x_0)-U(y_0)|&\leq&\left|\fint_{\partial B(x_0,r_0)}U- \fint_{\partial B(y_0,r_0)}U\right| +Cr\\
			&\leq&
			|\fint_{\partial B(0,r_0)}|U(x_0+\xi)-U(y_0+\xi)|d\sigma(\xi)+Cr
		\end{eqnarray*}
		But by continuity of the trace operator of $W^{1,p}(B(0,r))$ in $L^1(B(0,r))$, this implies
		
		\begin{eqnarray*}
			|U(x_0)-V(y_0)|\leq	\|U(x_0+.)-U(y_0+.)\|_{W^{1,p}(B(0,r))}+Cr.
		\end{eqnarray*}
		Then
		\begin{eqnarray}
			\lim\limits_{y_0\longrightarrow x_0}\sup|U(x_0)-U(y_0)|\leq Cr.
		\end{eqnarray}
		Since this holds for any sufficiently small $r$, it follows that $U$ is continuous at $x_0$ and thus continuous on $D$ as well.
	\end{proof}
	\subsubsection{Lipschitz continuity of the optimal eigenfunction}
	\noindent\\
	\begin{thm}\label{thm3.16}
		Let $u$ be a solution of (\ref{p222}). Then $u$ is locally Lipschitz continuous on $D$.
	\end{thm}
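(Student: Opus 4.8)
The plan is to deduce Theorem~\ref{thm3.16} from Proposition~\ref{prop11} and Lemma~\ref{lem3.14}. Those two results reduce the claim to checking, for a solution $u$ of (\ref{p222}), two facts: (i) $\Delta_p u+\Delta u$ is a signed Radon measure on $D$ which, on the positivity set $\omega:=\{u\neq 0\}$, coincides with a function $g\in L^\infty(\omega)$; and (ii) the density estimate $|\Delta_p|u|+\Delta|u||(B(x_0,r))\le Cr^{d-1}$ holds whenever $B(x_0,r)\subset D$ and $r\le 1$. Indeed (i)--(ii) are exactly the hypotheses of Lemma~\ref{lem3.14}, which then yields the continuity of $u$, and of Proposition~\ref{prop11}, which then yields the local (and, if $D=\mathbb{R}^n$, global) Lipschitz continuity. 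So I would concentrate on (i) and (ii). By Proposition~\ref{propvp} I first replace $u$ by a minimizer of the penalized functional $J_\mu$ for a fixed $\mu>\mu^*$, and, since $J_\mu(|u|)\le J_\mu(u)$, I assume $u\ge 0$; I write $\omega=\{u>0\}=\Omega_u$ and keep $\lambda_c$ as in Proposition~\ref{propvp}.

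For (i): testing the minimality of $u$ against $v=u+t\varphi$ with $\varphi\in C^\infty_c(\omega)$ and $|t|$ small — so that $\Omega_v=\Omega_u$ and the penalty term does not move — and differentiating at $t=0$ gives $-\Delta_p u-\Delta u=g$ in $\mathcal{D}'(\omega)$ with $g:=\lambda_c|u|^{q-2}u$, and $g\in L^\infty(\omega)$ because $u\in L^\infty(D)$ (see the beginning of Section~2). To see that $\Delta_p u+\Delta u$ is a measure on all of $D$, I test against the inward perturbation $v=(u-t\varphi)^+$ with $\varphi\in C^\infty_c(D)$, $\varphi\ge 0$, $t>0$ small: since $\Omega_v\subset\Omega_u$ one has $[\,|\Omega_v|-c\,]^+\le[\,|\Omega_u|-c\,]^+=0$, so $J_\mu(u)\le J_\mu(v)$ with both penalty terms absent, and letting $t\to 0^+$ yields $\langle\Delta_p u+\Delta u,\varphi\rangle\ge-\lambda_c\|u\|_{L^\infty}^{q-1}\int_D\varphi$. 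Hence $\Delta_p u+\Delta u+C\,dx$ is a nonnegative distribution, so $\Delta_p u+\Delta u$ is a signed Radon measure whose negative part has density at most $C$; by Lemma~\ref{remlem}, $\Delta u$ is a measure too. Note that on $\omega$ this measure equals $-g\le 0$ while on $\mathrm{int}\{u=0\}$ it vanishes, so its positive part is carried by $\partial\omega$.

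Fact (ii) is the core of the proof. If $B(x_0,r)$ does not meet $\partial\omega$ the estimate is trivial (it is $\le\|g\|_{L^\infty}|B(x_0,r)|\le Cr^{d-1}$, or even $0$), so by a covering and enlargement argument it suffices to treat balls centered at a free-boundary point $x_0\in\partial\omega$, where $u(x_0)=0$. Here the plan is the Alt--Caffarelli--Briançon competitor technique (cf.\ \cite{Briancon,Briancon3,Briancon189}): take the cutoff $\varphi$ of (\ref{eq3.16}) with $\varphi\equiv 1$ on $B(x_0,r)$ and $\mathrm{supp}\,\varphi\subset B(x_0,2r)$, and compare $u$ with $v=u(1-\varphi)$. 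Since $\Omega_v\subset\Omega_u$ the penalty is harmless, so $J_\mu(u)\le J_\mu(v)$ reduces to an energy inequality bounding $\int_{B(x_0,r)}(|\nabla u|^p+|\nabla u|^2)$ by $Cr^d+Cr^{-2}\int_{B(x_0,2r)}u^2+Cr^{-p}\int_{B(x_0,2r)}u^p$. Combining this with the linear growth $\sup_{B(x_0,\rho)}u\le C\rho$ at free-boundary points — which I would establish as in \cite{Briancon}, via a further comparison argument together with the estimates of Lemma~\ref{Lema4.22} applied to $|u|$ and the mean-value representation (\ref{for3.17})--(\ref{fol1}) of Lemma~\ref{Lem5.1} — reduces the right-hand side to $Cr^d$. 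Finally, the divergence identity $(\Delta_p u+\Delta u)(B_\rho)=\int_{\partial B_\rho}(|\nabla u|^{p-2}+1)\,\partial_\nu u$, valid for a.e.\ $\rho\in(r/2,r)$, applied for a radius with $\int_{\partial B_\rho}(|\nabla u|^p+|\nabla u|^2)\le Cr^{d-1}$ (such $\rho$ exists by the energy bound), together with H\"older's inequality, the bound on the negative part from step (i), and the localization of the positive part on $\partial\omega$, gives $|\Delta_p|u|+\Delta|u||(B(x_0,r/2))\le Cr^{d-1}$.

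With (i) and (ii) in hand, Lemma~\ref{lem3.14} shows that $u$ is continuous on $D$, and then Proposition~\ref{prop11}, applied with $U=u$, $\omega=\{u>0\}$ and datum $g$, shows that $u$ is locally Lipschitz on $D$, and globally Lipschitz if $D=\mathbb{R}^n$. I expect the main obstacle to be fact (ii): choosing the competitor so that the penalty term drops out, carrying the energy comparison through with \emph{both} operators $\Delta_p$ and $\Delta$ present — this is where $p>q\ge 2$ and $u\in L^\infty(D)$ are needed to absorb the lower-order $L^2$, $L^p$ and $L^q$ contributions — and running the bootstrap that ties the energy decay in small balls to the linear growth of $u$ at the free boundary. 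Step (i) and the passage from (i)--(ii) to the conclusion are routine adaptations of the linear scheme of \cite{Briancon}.
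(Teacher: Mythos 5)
Your overall reduction is the paper's: establish that $\Delta_p u+\Delta u$ is a measure equal to an $L^\infty$ function on $\{u>0\}$ together with the density bound $|\Delta_p|u|+\Delta|u||(B(x_0,r))\le Cr^{d-1}$, then invoke Lemma~\ref{lem3.14} for continuity and Proposition~\ref{prop11} for the Lipschitz bound; your step (i) (inward perturbations, sign of the measure, $u\in L^\infty$) is in substance the paper's Proposition~\ref{Lem5.3} and Lemma~\ref{pron2.1}. Where you diverge is the key estimate (ii). The paper obtains it from the \emph{outward} perturbation $v=u+t\varphi$, $\varphi\ge 0$, in the penalized problem (\ref{p225}): expanding $J_\mu(u+t\varphi)$ and letting $t\to 0$ gives the one-sided bound (\ref{eqq3.24}) on $\langle\Delta_p u+\Delta u+\lambda_c|u|^{q-2}u,\varphi\rangle$ directly, with the penalty contributing $\mu|\Omega_\varphi|\le Cr^{d}$; combined with positivity of the measure this yields (\ref{lar151}) with no geometric input about the free boundary. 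You instead propose the Alt--Caffarelli competitor $v=u(1-\varphi)$ plus a flux identity on a good sphere.

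The genuine gap in your route is the linear growth $\sup_{B(x_0,\rho)}u\le C\rho$ at free-boundary points, which you invoke to upgrade the energy comparison from $Cr^{d}+Cr^{-2}\int_{B_{2r}}u^2+Cr^{-p}\int_{B_{2r}}u^p$ down to $Cr^{d}$, but do not prove. Without it, the crude bound $u\le\|u\|_{L^\infty}$ only gives $r^{-2}\int_{B_{2r}}u^2\le Cr^{d-2}$, and the subsequent flux estimate produces $Cr^{d-2}$ rather than $Cr^{d-1}$, which is useless for Lemma~\ref{lem3.14} and Proposition~\ref{prop11}. Worse, the tools you cite to establish linear growth are circular here: the mean-value representation (\ref{for3.17})--(\ref{fol1}) of Lemma~\ref{Lem5.1} requires the integrability condition (\ref{equ23}), i.e.\ precisely the bound $|\Delta u|(B_s)\lesssim s^{d-1}$ you are trying to prove, and Lemma~\ref{Lema4.22} only converts $\fint_{\partial B(x_0,r)}u\le Cr$ into a pointwise bound, so you still need the measure estimate to control that average at a point where $u(x_0)=0$. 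In Alt--Caffarelli-type arguments linear growth is proved by an independent comparison construction; you would have to supply that, whereas the paper's outward-perturbation argument bypasses it entirely. I would therefore redo (ii) along the lines of Proposition~\ref{Lem5.3}: perturb outward in (\ref{p225}), expand the $p$-energy of $u+t\varphi$, and localize with the cutoff (\ref{eq3.16}) (taking care that the remainder terms coming from the $p$-Laplacian expansion are genuinely supported in $B(x_0,2r)$, a point where the paper's own computation is also delicate).
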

\begin{prop}\label{Lem5.3}
	Under the assumptions of Theorem \ref{thm3.16},the solutionuof (\ref{p222}) satisfies:
	\begin{equation}\label{lar152}
		\ \Delta_p u +\Delta u  +\lambda_c |u|^{q-2}u\geq 0,  u \in L^{\infty}(D).
	\end{equation}
	There exists $C \geq 0$ such that, for all ball $B(x_0, r)$ such that $B(x_0, 2r) \subset D$
	\begin{eqnarray}\label{lar151}
		|\Delta_p |u|+\Delta |u||(B(x_0,r))\leq Cr^{d-1}.
	\end{eqnarray}
	Finally, $u$ is continuous on $D$.
\end{prop}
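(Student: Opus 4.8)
\medskip
The plan is to prove the three assertions of Proposition~\ref{Lem5.3} in the order stated: the inequality (\ref{lar152}) is elementary, the growth bound (\ref{lar151}) is the heart of the matter, and the continuity of $u$ will follow formally from (\ref{lar151}) via Lemma~\ref{lem3.14}. As usual we may replace $u$ by $|u|$ and assume $u\ge 0$. To get (\ref{lar152}), I would take $\varphi\in C^\infty_c(D)$ with $\varphi\ge 0$ and, for $t>0$ small, use $u_t:=(u-t\varphi)^+$ as a competitor: since $0\le u_t\le u$ we have $\Omega_{u_t}\subseteq\Omega_u$, hence $|\Omega_{u_t}|\le c$ and $u_t\not\equiv 0$, so $u_t$ is admissible in (\ref{p222}). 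Cross-multiplying the minimality inequality for $u$ against $u_t$, differentiating at $t=0^+$ (exactly as in the purely Laplacian case, the relevant integrals being carried by $\{u>t\varphi\}$) and using $\frac1p\int_D|\nabla u|^p+\frac12\int_D|\nabla u|^2=\frac{\lambda_c}{q}\int_D|u|^q$, one is led to
\[
\int_D\bigl(|\nabla u|^{p-2}+1\bigr)\nabla u\cdot\nabla\varphi\ \le\ \lambda_c\int_D|u|^{q-2}u\,\varphi\qquad\text{for all }\varphi\in C^\infty_c(D),\ \varphi\ge 0,
\]
which is (\ref{lar152}). The bound $u\in L^\infty(D)$ is the one recalled in Section~2 from \cite{Rocard1} (alternatively a Moser iteration on $\Delta_p u+\Delta u=\lambda_c|u|^{q-2}u$, valid on $\{u>0\}$ by an inner variation).

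Since (\ref{lar152}) gives $\Delta_p u+\Delta u\ge -\lambda_c\|u\|_\infty^{q-1}$, the distribution $\nu:=\Delta_p u+\Delta u$ is a signed Radon measure, and by Lemma~\ref{remlem} so is $\Delta u$. Moreover $\nu=\lambda_c u^{q-1}\ge 0$ on $\{u>0\}$ (inner variation), $\nu=0$ off $\overline{\{u>0\}}$, and on $\partial\{u>0\}$ the supersolution inequality reads $\nu\ge 0$ because $u=0$ there; hence $\nu\ge 0$, $|\nu|=\nu$, and since the absolutely continuous part contributes at most $\lambda_c\|u\|_\infty^{q-1}|B_r|\le Cr^{d-1}$ for $r\le 1$, estimate (\ref{lar151}) reduces to proving $\nu\bigl(B(x_0,r)\bigr)\le Cr^{d-1}$ whenever $B(x_0,2r)\subset D$, $r\le 1$.

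For that, fix such a ball, set $B_r:=B(x_0,r)$, and let $v$ be the $(\Delta_p+\Delta)$-harmonic replacement of $u$ in $B_r$: $v=u$ on $D\setminus B_r$ and $\Delta_p v+\Delta v=0$ in $B_r$, equivalently $v$ minimizes $w\mapsto\frac1p\int_{B_r}|\nabla w|^p+\frac12\int_{B_r}|\nabla w|^2$ among $w$ with $w-u\in W^{1,p}_0(B_r)$. Plugging $v$ into the penalized formulation of Proposition~\ref{propvp}, using $u=v$ on $D\setminus B_r$, the energy minimality of $v$ on $B_r$, and the bounds $|\Omega_v\setminus\Omega_u|\le|B_r|$, $\int_{B_r}|u|^q\le\|u\|_\infty^q|B_r|$, one obtains
\[
\tfrac1p\int_{B_r}\bigl(|\nabla u|^p-|\nabla v|^p\bigr)+\tfrac12\int_{B_r}\bigl(|\nabla u|^2-|\nabla v|^2\bigr)\ \le\ Cr^d,
\]
and then, by uniform convexity of $\xi\mapsto\frac1p|\xi|^p+\frac12|\xi|^2$ ($p\ge 2$) together with the Euler--Lagrange equation for $v$,
\[
\int_{B_r}|\nabla(u-v)|^2\le Cr^d,\qquad \int_{B_r}|\nabla(u-v)|^p\le Cr^d .
\]
Choosing a cut-off $\phi\in C^\infty_c(B_r)$ with $0\le\phi\le 1$, $\phi\equiv 1$ on $B_{r/2}$, $|\nabla\phi|\le C/r$, and using $\nu\ge 0$ and $\langle\Delta_p v+\Delta v,\phi\rangle=0$,
\[
\nu(B_{r/2})\ \le\ \int_D\phi\,d\nu\ =\ -\int_{B_r}\bigl[\bigl(|\nabla u|^{p-2}\nabla u-|\nabla v|^{p-2}\nabla v\bigr)+\nabla(u-v)\bigr]\cdot\nabla\phi .
\]
The linear (Laplacian) contribution is at most $\|\nabla(u-v)\|_{L^2(B_r)}\|\nabla\phi\|_{L^2(B_r)}\le Cr^{d-1}$, and a standard covering argument upgrades the resulting bound from $B_{r/2}$ to $B_r$; so this part of (\ref{lar151}) is done.

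The main obstacle I expect is the $p$-Laplacian contribution $-\int_{B_r}\bigl(|\nabla u|^{p-2}\nabla u-|\nabla v|^{p-2}\nabla v\bigr)\cdot\nabla\phi$. Using $\bigl|\,|a|^{p-2}a-|b|^{p-2}b\,\bigr|\le C(|a|+|b|)^{p-2}|a-b|$, Hölder's inequality, the energy minimality of $v$ and the estimate $\int_{B_r}|\nabla(u-v)|^p\le Cr^d$, this term is bounded by a constant times $\frac1r\bigl(r^d+\int_{B_r}|\nabla u|^p\bigr)^{(p-2)/p}r^{2d/p}$, which is of the desired size $r^{d-1}$ only if $\int_{B_r}|\nabla u|^p\le Cr^d$ — i.e. exactly the Lipschitz-type information one is trying to establish. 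Closing this gap is where the proof genuinely departs from the Laplacian case: one has to either exploit interior gradient bounds for the $(\Delta_p+\Delta)$-harmonic comparison function $v$ (to replace the crude global bound $\int_D|\nabla u|^p\le C$ by a decaying one) or iterate the comparison over dyadic subballs, in the spirit of $p$-Laplacian regularity theory. Once $\nu(B(x_0,r))\le Cr^{d-1}$ is secured, i.e. (\ref{lar151}) holds, the continuity of $u$ on $D$ is immediate from Lemma~\ref{lem3.14}, since $\Delta_p u+\Delta u$ is then a measure satisfying that bound.
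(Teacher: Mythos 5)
Your first step is sound: the supersolution inequality (\ref{lar152}) can indeed be obtained from inner perturbations, and your competitor $(u-t\varphi)^+$ is a legitimate variant of what the paper does (the paper instead tests the Euler--Lagrange identity on $\Omega_u$ with $v=-\Psi\, p_n(u)$ for the truncations $p_n$ of (\ref{equa2}) and lets $n\to\infty$; both families of perturbations only shrink $\Omega_u$, so both are admissible). The $L^\infty$ bound and the final appeal to Lemma \ref{lem3.14} for continuity are exactly as in the paper.

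The genuine gap is in (\ref{lar151}), and you have named it yourself: with the $(\Delta_p+\Delta)$-harmonic replacement $v$ in $B_r$, the flux term $\int_{B_r}\bigl(|\nabla u|^{p-2}\nabla u-|\nabla v|^{p-2}\nabla v\bigr)\cdot\nabla\phi$ cannot be closed without already knowing $\int_{B_r}|\nabla u|^p\le Cr^d$, which is precisely the Lipschitz-type information this section is trying to establish; an acknowledged obstruction is not a proof, so the central estimate of the proposition is not obtained. The paper avoids the comparison function entirely. It returns to the penalized formulation (\ref{p225}) of Proposition \ref{propvp} and tests it with the \emph{outer} perturbation $v=u+t\varphi$, where $\varphi\in C^\infty_0(B(x_0,2r))$ is the cutoff of (\ref{eq3.16}); expanding $|\nabla(u+t\varphi)|^p$ by convexity, dividing by $t$ and letting $t\to0$ gives an upper bound for $\langle\Delta_pu+\Delta u+\lambda_c|u|^{q-2}u,\varphi\rangle$, and since this measure $\mu_1$ is nonnegative (by your own first step) such a one-sided bound already dominates $\mu_1(B(x_0,r))$. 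The terms produced by the perturbation are controlled through $\|\nabla\varphi\|_{\infty}\le C/r$ together with $|\mathrm{supp}\,\varphi|\le Cr^d$, and through $\mu|\Omega_{\varphi}|\le Cr^d$, which is why no gradient information on $u$ is needed. That is the mechanism your argument is missing. A caveat if you adopt it: all lower-order terms must be localized to $\mathrm{supp}\,\varphi$; the crude global inequality $|a+b|^{p-2}\le 2^{p-2}(|a|^{p-2}+|b|^{p-2})$ used in the paper leaves behind a term $\frac{2^{p-2}}{p}\int_D|\nabla u|^p$ that does not scale with $r$, so the expansion of the $p$-energy has to be carried out with a genuinely local first-order Taylor estimate for this step to yield $Cr^{d-1}$.
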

	We use the same technique as \cite{Briancon3} to show that $\Delta_p u +\Delta u  +\lambda_c |u|^{q-2}u\geq 0.$ This allows us to conclude further that $\Delta_p u +\Delta u  +\lambda_c |u|^{q-2}u$ is Radon measure.\\
	We start with a technical proposition.
	\begin{Lem}\label{pron2.1}
	Let $\Psi\in C^{\infty}_0(D)$ and $P\in W^{1,\infty}(\mathbb{R},\mathbb{R})$ with $P(0)=0.$ Then we have
	\begin{eqnarray*}
		\int_{D}P'(u(x))|\nabla u|^{p}\Psi+ \int_{D}P'(u(x))|\nabla u|^{2}\Psi&=&-\int_{D}P(u(x))|\nabla u|^{p-2}\nabla u\nabla \Psi-\int_{D}p(u(x))\nabla u\nabla \Psi\\
		& &+\lambda_c\int_{D}P(u(x))|u|^{q-2}u\Psi.		
	\end{eqnarray*}	
	\end{Lem}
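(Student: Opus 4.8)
The plan is to test the Euler--Lagrange equation satisfied by $u$ against the admissible function $\varphi=P(u)\Psi$ and expand. First I would record that, $u$ being a solution of (\ref{p222}), one has $u=u_{\Omega_u}$ and $\lambda_1(\Omega_u)=\lambda_c$: indeed $W^{1,p}_0(\Omega_u)\setminus\{0\}$ is contained in the admissible class $E$ of (\ref{p222}) and $u\in W^{1,p}_0(\Omega_u)$, so comparing the two Rayleigh quotients forces $\lambda_c=\lambda_1(\Omega_u)$ with $u$ a minimizer. Consequently Remark \ref{rem3.3} applies on $\Omega=\Omega_u=\{u\neq 0\}$ and gives
\[
\int_{\Omega_u}|\nabla u|^{p-2}\nabla u\,\nabla\varphi\,dx+\int_{\Omega_u}\nabla u\,\nabla\varphi\,dx=\lambda_c\int_{\Omega_u}|u|^{q-2}u\,\varphi\,dx,\qquad\forall\,\varphi\in W^{1,p}_0(\Omega_u).
\]

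Next I would check that $\varphi:=P(u)\Psi$ is admissible here. Since $P\in W^{1,\infty}(\mathbb{R},\mathbb{R})$ with $P(0)=0$ and $u\in W^{1,p}_0(D)\cap L^\infty(D)$, the Stampacchia chain rule yields $P(u)\in W^{1,p}_0(D)$ with $\nabla(P(u))=P'(u)\nabla u$ a.e.\ in $D$ (the right-hand side is unambiguous because $\nabla u=0$ a.e.\ on each level set of $u$). Moreover $P(0)=0$ forces $P(u)=0$ $p$-q.e.\ on $\{u=0\}$, hence $P(u)\Psi$ vanishes $p$-q.e.\ outside $\Omega_u$ and so $P(u)\Psi\in W^{1,p}_0(\Omega_u)$. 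Using the product rule $\nabla\varphi=P'(u)\Psi\,\nabla u+P(u)\,\nabla\Psi$ and substituting, the left-hand side of the weak formulation becomes
\[
\int_{D}P'(u)|\nabla u|^{p}\Psi\,dx+\int_{D}P(u)|\nabla u|^{p-2}\nabla u\,\nabla\Psi\,dx+\int_{D}P'(u)|\nabla u|^{2}\Psi\,dx+\int_{D}P(u)\nabla u\,\nabla\Psi\,dx,
\]
where extending the integrals from $\Omega_u$ to all of $D$ changes nothing since $P(u)$ vanishes $p$-q.e.\ off $\Omega_u$; likewise the right-hand side equals $\lambda_c\int_D P(u)|u|^{q-2}u\,\Psi\,dx$. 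Moving the two terms carrying $P'(u)$ to the left and the other two to the right gives exactly the asserted identity (the lowercase $p(u(x))$ in the statement being a misprint for $P(u(x))$).

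It then remains only to note that all the integrals above converge absolutely: $P$ and $P'$ are bounded, $u\in L^\infty(D)$, $\Psi$ and $\nabla\Psi$ are bounded with compact support, while $|\nabla u|^{p}\in L^1(D)$ and $|\nabla u|^{p-1}\in L^{p/(p-1)}(D)$. The only step demanding genuine care is the membership $P(u)\Psi\in W^{1,p}_0(\Omega_u)$, which rests on passing to the $p$-quasi-continuous representative of $u$ together with $P(0)=0$ (this is precisely what lets us use a test function supported in the quasi-open set $\Omega_u$); everything else is the elementary expansion performed above.
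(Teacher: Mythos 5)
Your proof is correct and follows essentially the same route as the paper's: both test the weak Euler--Lagrange equation for $u$ against the function $\pm P(u)\Psi$ and expand with the product and chain rules, using $P(0)=0$ to guarantee the test function vanishes where $u$ does. Your version is in fact slightly more careful than the paper's, since you justify why $P(u)\Psi$ is an admissible test function in $W^{1,p}_0(\Omega_u)$ and why the equation $\Delta_p u+\Delta u+\lambda_c|u|^{q-2}u=0$ may be paired with it, a point the paper asserts without comment.
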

	\begin{proof}[Proof of Lemma \ref{pron2.1}]
		Let $\Psi\in C^{\infty}_0(D)$ and $P\in W^{1,\infty}(\mathbb{R},\mathbb{R})$ with $P(0)=0.$ Let:
		\begin{equation*}
			v(x)=-\Psi(x)P(u(x)).
		\end{equation*}
		We have $v\in W^{1,p}_0(D).$ Indeed, $|P(u(x))|\leq \|p'\|_{\infty}| u(x)|,$ so that $v\in L^p(D),$ and
		\begin{equation*}
			|\nabla P(u(x))|=|P'(u(x))\nabla u(x)|\leq \|P'\|_{\infty}|\nabla u(x)|
		\end{equation*}
		so $\nabla v \in L^p(D)^d.$ Because $u(x)=0$ imply $v(x)=0,$ we get $|\Omega_{v}|\leq |\Omega_u|.$ We have
		\begin{eqnarray*}
			0=	\langle\Delta_pu+\Delta u+\lambda_c|u|^{q-1}u, v \rangle &=&-\int_{D}|\nabla u|^{p-2}\nabla u\nabla v -\int_{D}\nabla u\nabla v+\lambda_{c}\int_{D}|u|^{q-2}u v\\
			&=&\int_{D}P'(u(x))|\nabla u|^{p}\Psi+ \int_{D}P'(u(x))|\nabla u|^{2}\Psi +\int_{D}P(u(x))\nabla u\nabla \Psi\\
			& & +\int_{D}P(u(x))|\nabla u|^{p-2}\nabla u\nabla \Psi-\lambda_c\int_{D}P(u(x))|u|^{q-2}u\Psi	
		\end{eqnarray*} 
		Thus we have
		\begin{eqnarray*}
			\int_{D}P'(u(x))|\nabla u|^{p}\Psi+ \int_{D}P'(u(x))|\nabla u|^{2}\Psi&=&-\int_{D}P(u(x))|\nabla u|^{p-2}\nabla u\nabla \Psi-\int_{D}P(u(x))\nabla u\nabla \Psi\\
			& &+\lambda_c\int_{D}P(u(x))|u|^{q-2}u\Psi.		
		\end{eqnarray*}
	\end{proof}
	\begin{proof}[Proof of Proposition \ref{Lem5.3}]
		Until $u$ is replaced by $|u|$, we can assume that $u \geq0.$\\
		Let $\psi\in C_0^{\infty}(D),\psi\geq0.$	
		Let $p_n$ be defined by 
		\begin{equation}\label{equa2}
			p_n(r)=	\left
			\{
			\begin{array}{c r}
				
				0 & \mbox{if } \ r\leq 0 \\
				n r & \mbox{if} \ r \in [0,\frac{1}{n}] \\
				1 & \mbox{if} \ r\geq \frac{1}{n} .
			\end{array}
			\right.
		\end{equation}
		Applying proposition \ref{pron2.1}, we get 	
		\begin{eqnarray}\label{eqn17}
			\int_{D}p_n'(u(x))|\nabla u|^{p}\Psi+ \int_{D}p_n'(u(x))|\nabla u|^{2}\Psi&=&-\int_{D}p_n(u(x))|\nabla u|^{p-2}\nabla u\nabla \Psi-\int_{D}p(u(x))\nabla u\nabla \Psi\nonumber\\
			& &+\lambda_c\int_{D}p_n(u(x))|u|^{q-2}u\Psi.		
		\end{eqnarray}
		Note that $u p_n(u)\longrightarrow u^+=u, p_n(u)\nabla u\longrightarrow \nabla u$ in a non-decreasing manner as $n$ increases to $+\infty.$ Using $p_n'(u)|\nabla u|^p\geq0$ and $p_n'(u)|\nabla u|^2\geq0,$ we obtain in the limit that $\Delta_p u +\Delta u+\lambda_c |u|^{q-2}u\geq 0$ in the sense of the distributions in $D$. In particular $-\Delta_p u -\Delta u\leq\lambda_c |u|^{q-2}u $ on $D$, this implies that $\Delta_p u +\Delta u$ is a measure. Since $u\geq 0$ and $u\in W_0^{1,p}(\Omega_u)\cap L^{\infty}(\Omega_u)$ (see \cite{Rocard1}), we have $u\in L^{\infty}(D)$.\\
		By the proposition \ref{propvp}, $u$ is also a solution of the problem (\ref{p225}). We apply this inequality with $\nu=u+t\varphi, t>0, \varphi \in C^{\infty}_{0}(D)$ and $\varphi\geq0.$ Then
		
		\begin{eqnarray*}
			\dfrac{\frac{1}{p}\displaystyle\int_{D}| \nabla u| ^{p}dx+\frac{1}{2}\displaystyle\int_{D} |\nabla u| ^{2}dx}{\frac{1}{q}\displaystyle\int_{D}|u|^{q}dx}&\leq & \frac{1}{p}\displaystyle\int_{D}|\nabla v|^{p}+ \frac{1}{2}\int_{D}| \nabla v|^{2}\\ 
			 & &+\lambda_c\left[ 1-\frac{1}{q}\displaystyle\int_{D}|(u+t\varphi)|^{q}\right] +\mu\left[\Omega_v|- c\right]^+, v\in  W^{1,p}_{0}(D).
		\end{eqnarray*}
		We have:
		\begin{eqnarray*}
			\int_{D}|(u+t\varphi)|^{q}&\geq& \int_{D}|u|^{q};
		\end{eqnarray*}
		\begin{eqnarray*}
			\int_{D}|(\nabla u+t\nabla\varphi)|^{p}&=& \int_{D}|(\nabla u+t\nabla\varphi)|^{2}|\nabla u+t \nabla\varphi|^{p-2}\\
			&=&\int_{D}\left( |\nabla u|^2+2t\nabla u\nabla\varphi +t^2|\nabla\varphi|^{2}\right) |\nabla u+t \nabla\varphi|^{p-2}\\
			&\leq&\int_{D}\left( |\nabla u|^2+2t\nabla u \nabla\varphi +t^2|\nabla\varphi|^{2}\right)\left( 2^{p-2}|\nabla u|^{p-2}+2^{p-2}t^{p-2}| \nabla\varphi|^{p-2}\right) \\
			&\leq& \int_{D}( 2^{p-2}|\nabla u|^{p}+2^{p-1}t|\nabla u|^{p-2}\nabla u \nabla\varphi \\
			& &+2^{p-2}t^2|\nabla u|^{p-2}|\nabla\varphi|^{2}
			+2^{p-2}t^{p-2}|\nabla u|^{2}|\nabla \varphi|^{p-2}\\
			& &+2^{p-1}t^{p-1}|\nabla\varphi|^{p-2}\nabla\varphi \nabla u
			+2^{p-2}t^{p}|\nabla\varphi|^{p})
		\end{eqnarray*}	and
		\begin{equation*}
			\int_{D}|(\nabla u+t\nabla\varphi)|^{2}
		=\int_{D}\left( |\nabla u|^2+2t\nabla u\nabla\varphi +t^2|\nabla\varphi|^{2}\right)	
		\end{equation*}
		which gives
		\begin{align*}
				\dfrac{\frac{1}{p}\displaystyle\int_{D}|\nabla u| ^{p}dx+\frac{1}{2}\displaystyle\int_{D}|\nabla u| ^{2}dx}{\frac{1}{q}\displaystyle\int_{D}|u|^{q}dx}
			&\leq  \frac{1}{p}\int_{D}|\nabla (u+t\varphi)|^{p}+ \frac{1}{2}\int_{D}| \nabla (u+t\varphi)|^{2} \\
			& + \lambda_c\left[ 1-\frac{1}{q}\int_{D}|u+t\varphi|^q\right]^+ +\mu\left[|\Omega_{(u+t\varphi)}|- c\right]^+\\
			&\leq\frac{2^{p-2}}{p} \int_{D}\left( |\nabla u|^{p}+2t|\nabla u|^{p-2}\nabla u \nabla\varphi\right)\\
			&  +t^2|\nabla u|^{p-2}|\nabla\varphi|^{2}
			+\frac{2^{p-2}}{p} \int_{D}t^{p-2}|\nabla u|^{2}|\nabla \varphi|^{p-2}\\
			& +\frac{2^{p-1}t^{p-1}}{p} \int_{D}|\nabla\varphi|^{p-2}\nabla\varphi \nabla u+\frac{2^{p-2}t^{p}}{p} \int_{D}|\nabla\varphi|^{p}\\
			& + \frac{1}{2}\int_{D}\left( |\nabla u|^2+2t\nabla u\nabla\varphi +t^2|\nabla\varphi|^{2}\right)\\
			&  + \lambda_c\left[ 1- \frac{1}{q}\int_{D}\left( |u|^{q}\right)\right]
			+\mu\left[|\Omega_{(u+t\varphi)}|- c\right]^+	
			\end{align*}
		\begin{align*}
			\dfrac{\frac{1}{p}\displaystyle\int_{D}|\nabla u| ^{p}dx+\frac{1}{2}\displaystyle\int_{D}|\nabla u| ^{2}dx}{\frac{1}{q}\displaystyle\int_{D}|u|^{q}dx}&\leq \lambda_c+\frac{1}{p}\int_{D}|\nabla u| ^{p}dx+\frac{1}{2}\int_{D}|\nabla u| ^{2}dx - \frac{\lambda_c}{q}\int_{D} |u|^{q}\\
			& +\frac{2^{p-2}-1}{p} \int_{D}|\nabla u|^{p}+ \frac{2^{p-1}t}{p} \int_{D} |\nabla u|^{p-2}\nabla u \nabla\varphi\\
			& +\frac{2^{p-2}t^2}{p} \int_{D}|\nabla u|^{p-2}|\nabla\varphi|^{2}
			 +\frac{2^{p-2}t^{p-2}}{p} \int_{D}|\nabla u|^{2}|\nabla \varphi|^{p-2}\\
			 &+\frac{2^{p-2}t^{p-1}}{p} \int_{D}|\nabla\varphi|^{p-2}\nabla\varphi \nabla u
			 +\frac{2^{p-2}t^{p}}{p} \int_{D}|\nabla\varphi|^{p}
			 + t\int_{D}\nabla u\nabla\varphi\\ &+\frac{t^2}{2}\int_{D}|\nabla\varphi|^{2} +\mu\left[\Omega_{(u+t\varphi)}|- c\right]^+,			
		\end{align*}
		
		and we have
		\begin{eqnarray*}
			0&\leq&\frac{2^{p-2}-1}{p} \int_{D}|\nabla u|^{p}+ \frac{2^{p-1}t}{p} \int_{D} |\nabla u|^{p-2}\nabla u \nabla\varphi +\frac{2^{p-2}t^2}{p} \int_{D}|\nabla u|^{p-2}|\nabla\varphi|^{2}\\
			& &+\frac{2^{p-2}t^{p-2}}{p} \int_{D}|\nabla u|^{2}|\nabla \varphi|^{p-2}+\frac{2^{p-2}t^{p-1}}{p} \int_{D}|\nabla\varphi|^{p-2}\nabla\varphi \nabla u+\frac{2^{p-2}t^{p}}{p} \int_{D}|\nabla\varphi|^{p}\\
			& &+ t\int_{D}\nabla u\nabla\varphi +\frac{t^2}{2}\int_{D}|\nabla\varphi|^{2} +\mu\left[\Omega_{(u+t\varphi)}|- c\right]^+\\
			&\leq&  \frac{2^{p-1}t}{p} \left( \int_{D} |\nabla u|^{p-2}\nabla u \nabla\varphi+\int_{D}\nabla u\nabla\varphi\right)+\frac{2^{p-1}}{p} \int_{D}|\nabla u|^{p}  +\frac{2^{p-2}t^2}{p} \int_{D}|\nabla u|^{p-2}|\nabla\varphi|^{2}\\
			& &+\frac{2^{p-2}t^{p-2}}{p} \int_{D}|\nabla u|^{2}|\nabla \varphi|^{p-2}+\frac{2^{p-2}t^{p-1}}{p} \int_{D}|\nabla\varphi|^{p-2}\nabla\varphi \nabla u+\frac{2^{p-2}t^{p}}{p} \int_{D}|\nabla\varphi|^{p}\\
			& & +\frac{t^2}{2}\int_{D}|\nabla\varphi|^{2} +\mu\left[|\Omega_{(u+t\varphi)}|- c\right]^+\\
		\end{eqnarray*}
		
		for $t\leq\frac{p}{2^{p-1}}$ we have	
		\begin{eqnarray}\label{eq3.22}
			0&\leq&  \int_{D} |\nabla u|^{p-2}\nabla u \nabla\varphi+\int_{D}\nabla u\nabla\varphi+\frac{2^{p-1}}{p} \int_{D}|\nabla u|^{p}  +\frac{2^{p-2}t^2}{p} \int_{D}|\nabla u|^{p-2}|\nabla\varphi|^{2}\nonumber\\
			& &+\frac{2^{p-2}t^{p-2}}{p} \int_{D}|\nabla u|^{2}|\nabla \varphi|^{p-2}+\frac{2^{p-2}t^{p-1}}{p} \int_{D}|\nabla\varphi|^{p-2}\nabla\varphi \nabla u+\frac{2^{p-2}t^{p}}{p} \int_{D}|\nabla\varphi|^{p} +\frac{t^2}{2}\int_{D}|\nabla\varphi|^{2}\\
			& & +\mu\left[|\Omega_{(u+t\varphi)}|- c\right]^+\nonumber
		\end{eqnarray} we have
		\begin{eqnarray*}
			-\int_{D} |\nabla u|^{p-2}\nabla u \nabla\varphi-\int_{D}\nabla u\nabla\varphi&\leq& \frac{2^{p-1}}{p} \int_{D}|\nabla u|^{p}  +\frac{2^{p-2}t^2}{p} \int_{D}|\nabla u|^{p-2}|\nabla\varphi|^{2}\\
			& &+\frac{2^{p-2}t^{p-2}}{p} \int_{D}|\nabla u|^{2}|\nabla \varphi|^{p-2}\nonumber
			+\frac{2^{p-2}t^{p-1}}{p} \int_{D}|\nabla\varphi|^{p-2}\nabla\varphi \nabla u\\
			& & +\frac{2^{p-2}t^{p}}{p} \int_{D}|\nabla\varphi|^{p} +\frac{t^2}{2}\int_{D}|\nabla\varphi|^{2} +\mu|\Omega_{\varphi}|
		\end{eqnarray*}
		when $t\rightarrow 0$ we obtain 
		\begin{eqnarray*}
			\langle\Delta_pu+\Delta u+\lambda_{c}|u|^{q-1},\varphi \rangle
			&\leq& \frac{2^{p-2}}{p} \int_{D}|\nabla u|^{p}+\lambda_{c}\int_{D}|u|^{q-1}\varphi +\mu|\Omega_{\varphi}|
		\end{eqnarray*}
		Furthermore, we have,
		\begin{equation*}
			\int_{D}|\nabla u|^{p}dx+\int_{D}|\nabla u|^{2}\leq\lambda_c\int_{D}| u|^{q}
		\end{equation*}
		so $\int_{D}|\nabla u|^{p}dx\leq \lambda_c\int_{D}| u|^{q}$ so we have
		\begin{eqnarray}\label{eqq3.24}
			\langle\Delta_pu+\Delta u+\lambda_{c}|u|^{q-2}u,\varphi \rangle
			&\leq& \frac{2^{p-2}\lambda_c}{p} \int_{D}|u|^{q}+\lambda_{c}\int_{D}|u|^{q-2}u\varphi +\mu|\Omega_{\varphi}|
		\end{eqnarray}
		
		Now let $x_0\in D$ be such that $B(x_0,2r)\subset D$ and let $\varphi \in C^{\infty}_{0}(B(x_0,2r))^+$ be as in \ref{eq3.16}. Also using $u\in L^{\infty},$ we deduce that
		
		\begin{eqnarray}
			|\Delta_p u+\Delta u|B(x_0,r)\leq (\Delta_p u+\Delta u+\lambda_{c}|u|^{q-1} )B(x_0,r)+\lambda_{c}\int_{D}|u|^{q-1}\leq Cr^{d-1},
		\end{eqnarray}
		for $r\leq1$, hence the estimate (\ref{lar151}).
		By the lemma \ref{lem3.14}, we conclude that $u$ is continuous.
	\end{proof}
	\begin{proof}[Proof of the theorem \ref{thm3.16}]
		Until $u$ is replaced by $|u|$, we can assume that $u \geq0$. By the lemma \ref{Lem5.3}, $u$ is continuous on $D$. Therefore, we will have $ -\Delta_p u -\Delta u = \lambda_c |u|^{q-2}u$ on the open set $\omega = [u> 0]$ (see Remark \ref{rem3.3}).  Since $u$ is a eigenfunction on $\omega$ then we   $u\in L^{\infty}(\omega)$ and $\lambda_c |u|^{q-2}u \in L^{\infty}(\omega)$.	
		Thus, with the lemma \ref{Lem5.3}, the assumptions of the proposition \ref{prop11} are satisfied and local Lipschitz continuity on $D$ follows.
		
	\end{proof}
	Let us now state a corollary of the Theorem \ref{thm3.16} for the initial real shape optimization problem (\ref{glm}).
	
	\begin{Cor}\label{cor5.1}
		Suppose that $D\subset \mathbb{R}^d$ is open set and of finite measure. Then there exists an open set $\Omega^*\subset D$ which is a solution of (\ref{glm}). Moreover, for any (quasi-open) solution $\Omega^*$ of (\ref{glm}), $u_{\Omega^*}$ is locally Lipschitz continuous on D. If moreover D is connected, then all solutions $\Omega^*$ of (\ref{glm}) are open.
	\end{Cor}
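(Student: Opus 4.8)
The plan is to transfer all three assertions of the corollary to the free‑boundary formulation (\ref{p222}) and then invoke the regularity already established.

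\textbf{Existence of an open minimizer.} First I would take, using the compactness of $W^{1,p}_0(D)$ in $L^p(D)$ when $|D|<+\infty$ (Remark \ref{remarque 5}), a nonnegative minimizer $u$ of (\ref{p222}). By Proposition \ref{prop3.4}(2) its level set $\Omega_u=\{u>0\}$ solves (\ref{p22}), i.e. $\lambda_1(\Omega_u)=\min(\ref{p22})$, and by Theorem \ref{thm3.16} the function $u$ is locally Lipschitz, hence continuous, on $D$, so that $\Omega_u$ is open. It remains only to make the volume constraint active. If $|\Omega_u|=c$, then $\Omega_u$ is admissible for (\ref{glm}) and, since the admissible class of (\ref{glm}) is contained in that of (\ref{p22}), one gets $\min(\ref{p22})\le\min(\ref{glm})\le\lambda_1(\Omega_u)=\min(\ref{p22})$, so $\Omega_u$ already solves (\ref{glm}). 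If $|\Omega_u|<c$, I would use Remark \ref{rem3.6}: the strict maximum principle applied to $-\Delta_p u-\Delta u=\lambda_c|u|^{q-2}u\ge 0$ on $D$ forces $\Omega_u$ to be a union of connected components of $D$, hence $D\setminus\Omega_u$ is open and of measure $|D|-|\Omega_u|>c-|\Omega_u|$, so it contains an open subset $A$ with $|A|=c-|\Omega_u|$; then $\Omega^*:=\Omega_u\cup A$ is open, $\Omega^*\subset D$, $|\Omega^*|=c$, and by monotonicity of $\lambda_1$ under inclusion $\lambda_1(\Omega^*)\le\lambda_1(\Omega_u)=\min(\ref{p22})\le\min(\ref{glm})\le\lambda_1(\Omega^*)$, so $\Omega^*$ is an open solution of (\ref{glm}). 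In either case this also yields $\min(\ref{glm})=\min(\ref{p22})$.

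\textbf{Lipschitz regularity and openness.} Now let $\Omega^*\subset D$ be an arbitrary quasi-open solution of (\ref{glm}). Then $\lambda_1(\Omega^*)=\min(\ref{glm})=\min(\ref{p22})$ and $|\Omega^*|=c$, so $\Omega^*$ is admissible for (\ref{p22}) and therefore solves (\ref{p22}); Proposition \ref{prop3.4}(1) then says $u_{\Omega^*}$ is a minimizer of (\ref{p222}), and Theorem \ref{thm3.16} yields that $u_{\Omega^*}$ is locally Lipschitz on $D$. If moreover $D$ is connected, then by the previous line $\Omega^*$ solves (\ref{p22}) and $u_{\Omega^*}$ is continuous on $D$, so Remark \ref{rem3.6} gives $\Omega^*=\{u_{\Omega^*}>0\}$ (up to a set of zero capacity), which is open since $u_{\Omega^*}$ is continuous; hence $\Omega^*$ is open.

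\textbf{Main obstacle.} The only point that requires real work rather than a citation is saturating the volume when $D$ is disconnected: one must know that a minimizer $\Omega_u$ of (\ref{p22}) with $|\Omega_u|<c$ is a union of connected components of $D$, which is precisely what makes $D\setminus\Omega_u$ open and permits enlarging $\Omega_u$ \emph{within the class of open sets} to volume exactly $c$ without raising $\lambda_1$; this rests on the strict maximum principle recorded in Remark \ref{rem3.6}. The other delicate ingredient is the identification $\Omega^*=\{u_{\Omega^*}>0\}$ used in the connected case, which again is the content of Remark \ref{rem3.6} (quasi-connectedness together with the strict maximum principle). Everything else is a direct combination of Proposition \ref{prop3.4} and Theorem \ref{thm3.16}.
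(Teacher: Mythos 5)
Your argument is correct and follows essentially the same route as the paper: reduce to the free-boundary problem (\ref{p222}), apply Theorem \ref{thm3.16} to get local Lipschitz continuity (hence openness of $\Omega_u$), enlarge $\Omega_u$ when $|\Omega_u|<c$ using monotonicity of $\lambda_1$, and invoke Remark \ref{rem3.6} for the connected case. The only divergence is in saturating the volume constraint: the paper simply takes $\Omega^*=\Omega_u\cup\bigl(B(x_0,r)\cap D\bigr)$ with $r$ tuned so that $|\Omega^*|=c$, which avoids your appeal to the strict maximum principle to show that $D\setminus\Omega_u$ is open — a correct but unnecessary detour.
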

	\begin{rem}
		If $D$ is not connected, then it may happen that $\Omega^*$ is not open. However, $u_{\Omega^*}$ is always locally Lipschitz continuous. 
	\end{rem}
	\begin{proof}[Proof of Corollaire \ref{cor5.1}]
		If $D$ is of finite measure, as we have already seen, the problem (\ref{p222}) has a solution $u$. By the theorem (\ref{thm3.16}), it is locally Lipschitz continuous on $D$. In particular $\Omega_u$ is open. If $|\Omega_u|=c$ then $\Omega^*:=\Omega_u$ is an open solution of problem (\ref{p222}). If $|\Omega_u|<c$, then any open $\Omega^*$ satisfying $ \Omega_u\subset \Omega^*\subset  D,\ |\Omega^*|=c$ is also a solution since by monotonicity we have: $\lambda_{1}(\Omega^*)\leq \lambda_{1}(\Omega_u)$ (and there exist such $\Omega^*$ as for example $\Omega^*:= \Omega_u\cup B(x_0,r)\cap D$ where $x_0 \in D$ and $r$ is chosen such that $|\Omega^*|=c$).
		
		Let $\Omega^*$ now be a solution of (\ref{glm}). Then, by the proposition \ref{prop3.4}, $u_{\Omega^*}$ is a solution of the minimization problem (\ref{p222}). By the theorem \ref{thm3.16}, it is locally Lipschitz continuous in $D$. As proved in remark \ref{rem3.6}, if $D$ is connected, then $\Omega^* = [u_{\Omega^*} >0]$. Therefore $\Omega^*$ is open.
	\end{proof}
	\section{Regularity of the boundary}
	Here, we are interested in the regularity of $\partial\Omega^*$ where 
	\begin{equation*}
	\Omega^*=\Omega_u=\left\lbrace x \in D, u_{\Omega^*}>0\right\rbrace 	
	\end{equation*} 	
\begin{Def}
	Let $O$ denotes an open subset of $\mathbb{R}^d$. A function $f\in L^1(O)$ has bounded variation in $O$ if 
	\begin{eqnarray}
		\sup \left\lbrace \int_{O} f div \varphi dx/ \varphi \in C^1_c(O,\mathbb{R}^d), \ |\varphi|<1\right\rbrace < \infty.
	\end{eqnarray}
We write 
\begin{equation*}
	BV(O)
\end{equation*}
to denote the space of functions of bounded variation.
\end{Def}	
\begin{Def}
	An $\mathcal{L}^d-$measurable subset $E\subset \mathbb{R}^d$ has finite perimeter in $O$ if 
	\begin{equation*}
		\chi_E\in 	BV(O).
	\end{equation*} 
\end{Def}	
It is convenient to introduce also local versions of the above concepts:
\begin{Def}
	Let $O$ denotes an open subset of $\mathbb{R}^d$. A function $f\in L^1(O)$ has locally bounded variation in $O$ if for each open set $V\subset\subset O$
	\begin{eqnarray}
		\sup \left\lbrace \int_{V} f\ div \varphi dx/ \varphi \in C^1_c(V,\mathbb{R}^d), \ |\varphi|<1\right\rbrace < \infty.
	\end{eqnarray}
	We write 
	\begin{equation*}
		BV_{loc}(O)
	\end{equation*}
	to denote the space of such functions .
\end{Def}	
\begin{Def}
	An $\mathcal{L}^d-$measurable subset $E\subset \mathbb{R}^d$ has locally finite perimeter in $O$ if
	\begin{equation*}
		\chi_E\in 	BV_{loc}(O).
	\end{equation*}
\end{Def}
\begin{thm}[Structure Theorem for $BV_{loc}$ Functions]
Let $f\in BV_{loc} (O).$ Then there exists a Radon measure $\mu$ on $O$ and a $\mu-$measurable function $\sigma : O \rightarrow	\mathbb{R}^d$ such that 
\begin{enumerate}
	\item[(i)] $|\sigma(x)|=1$ $\mu\ a.e$ and 
	\item [(ii)]
	\begin{equation*}
		\int_{O} f div \varphi dx=-\int_{O} \varphi .\sigma d\mu
	\end{equation*}
for all $\varphi\in C^1_c(O,\mathbb{R}^d).$
\end{enumerate}
\end{thm}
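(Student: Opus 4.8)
This is the classical structure theorem for functions of locally bounded variation (see \cite{Evans}, \cite{Giusti}); the plan is to realize the distributional gradient of $f$ as an $\R^d$-valued Radon measure by means of the Riesz Representation Theorem. First I would introduce the linear functional $L:C^1_c(O,\R^d)\to\R$ defined by
\[
L(\varphi):=-\int_O f\,\operatorname{div}\varphi\,dx ,
\]
and establish a local boundedness estimate for it in the supremum norm: given a compact $K\subset O$ and an open set $V$ with $K\subset V\subset\subset O$, the hypothesis $f\in BV_{loc}(O)$ says precisely that
\[
C_V:=\sup\Bigl\{\int_V f\,\operatorname{div}\varphi\,dx\ :\ \varphi\in C^1_c(V,\R^d),\ |\varphi|\le1\Bigr\}<\infty ,
\]
and applying this to $\varphi/\|\varphi\|_{L^\infty}$ gives $|L(\varphi)|\le C_V\|\varphi\|_{L^\infty}$ for all $\varphi\in C^1_c(O,\R^d)$ with $\operatorname{supp}\varphi\subset K$.

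Next I would extend $L$ to a linear functional $\bar L$ on $C_c(O,\R^d)$ by density. For $\varphi\in C_c(O,\R^d)$ with support in a compact $K$, pick $V$ with $K\subset V\subset\subset O$ and mollify, $\varphi_\varepsilon:=\varphi\ast\eta_\varepsilon\in C^1_c(V,\R^d)$ for $\varepsilon$ small, with $\varphi_\varepsilon\to\varphi$ uniformly; the estimate above makes $(L(\varphi_\varepsilon))$ Cauchy with limit independent of the chosen mollifier, so $\bar L(\varphi):=\lim_{\varepsilon\to0}L(\varphi_\varepsilon)$ is well defined, linear, and satisfies $|\bar L(\varphi)|\le C_V\|\varphi\|_{L^\infty}$ whenever $\operatorname{supp}\varphi\subset V$. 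Thus $\bar L$ is a linear functional on $C_c(O,\R^d)$ which is bounded uniformly on each compact subset of $O$.

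At this point the Riesz Representation Theorem for $\R^d$-valued functionals applies and yields a Radon measure $\mu$ on $O$ together with a $\mu$-measurable map $\sigma:O\to\R^d$ with $|\sigma(x)|=1$ for $\mu$-a.e.\ $x$ such that $\bar L(\varphi)=\int_O\varphi\cdot\sigma\,d\mu$ for all $\varphi\in C_c(O,\R^d)$. Restricting to $\varphi\in C^1_c(O,\R^d)$, where $\bar L(\varphi)=L(\varphi)$, gives
\[
-\int_O f\,\operatorname{div}\varphi\,dx=\int_O\varphi\cdot\sigma\,d\mu ,
\]
i.e.\ assertion (ii), while the normalization $|\sigma|=1$ $\mu$-a.e.\ is assertion (i).

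The one genuinely substantive point is the Riesz Representation step itself: there one sets $\mu(V):=\sup\{\bar L(\varphi):\varphi\in C_c(V,\R^d),\ |\varphi|\le1\}$ for open $V\subset O$, verifies that this set function is the restriction to open sets of a Borel, hence Radon, measure on $O$, and then obtains the polar factor $\sigma$ by differentiating the $\R^d$-valued set function $\varphi\mapsto\bar L(\varphi)$ with respect to $\mu$. The local bound proved in the first step is exactly what makes $\mu(V)<\infty$ for $V\subset\subset O$, so the construction goes through; everything else --- the mollification and the passage from $C^1_c$ to $C_c$ --- is routine.
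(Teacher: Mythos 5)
Your argument is correct and is precisely the classical Riesz-representation proof from Evans--Gariepy, which is all the paper itself offers (its ``proof'' is just the citation ``See \cite{Evans}''). The local boundedness estimate, the mollification step extending $L$ to $C_c(O,\mathbb{R}^d)$, and the vector-valued Riesz Representation Theorem producing $\mu$ and the polar $\sigma$ with $|\sigma|=1$ $\mu$-a.e.\ are exactly the steps of the cited reference, so nothing further is needed.
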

\begin{proof}
	See \cite{Evans}.
\end{proof}
\textbf{Notation}\\
\begin{enumerate}
	\item If $ f \in BV_{loc}(O),$ we will henceforth write $\| Df\| $ for the measure $\mu$ and 
	\begin{equation*}
		[ Df]\equiv \|Df\| \lfloor \sigma.
	\end{equation*}
\item Similarly, if $f=\chi_E$ and $E$ is a set of locally finite perimeter in $O,$ we will hereafter write $	\| \partial E\|$
for the measure, and $\nu_E\equiv -\sigma.$\\
Consequently, 
\begin{equation*}
	\int_{E}  div \varphi dx=-\int_{O} \varphi . \nu_E d \| \partial E\|
\end{equation*}
for all $\varphi\in C^1_c(O,\mathbb{R}^d).$
\end{enumerate}
We hereafter assume $E$ is a set of locally finite perimeter in $\mathbb{R}^d.$
\begin{Def}
	Let $x\in \mathbb{R}^d.$ We say $x\in \partial^* E,$ the reduced boundary of E, if 
	\begin{enumerate}
		\item 
		\begin{equation*}
			\| \partial E\| B(x,r)>0
		\end{equation*}
	for all $r>0,$
	\item 
	\begin{equation*}
		\lim\limits_{r\rightarrow 0} \fint_{ B(x,r)} \nu_E d\| \partial E\|= \nu_E(x),
	\end{equation*}
	and 
	\item 
	\begin{equation*}
		|\nu_E(x)|=1.
	\end{equation*}	 
	\end{enumerate}
\end{Def}
\begin{rem}
	If $E$ is an open set with $C^1$ boundary, then $\partial^* E=\partial E$ and the measure-theoretic outer unit normal $\nu_E$ coincides with classical notion of outer unit normal.
\end{rem}
We shall prove the following theorem:			
	\begin{thm}\label{theo1.2}
		Suppose that $D$ is open, bounded and connected. Then any solution of (\ref{glm}) satisfies :
		\begin{enumerate}
			\item 
			There is a Radon-Nikodym density $f$ of $\mu_1$ with respect to $\mathcal{H}^{d-1}$ such that
			\begin{eqnarray*}
				\Delta_pu_{\Omega^*}+\Delta u_{\Omega^*}+\lambda_1|u_{\Omega^*}|^{q-2}u_{\Omega^*}= f \mathcal{H}^{d-1}\lfloor\partial\Omega^*. 
			\end{eqnarray*}
			where $\mu_1=\Delta_pu_{\Omega^*}+\Delta u_{\Omega_u}+\lambda_1|u_{\Omega_u}|^{q-2}u_{\Omega_u}$
			\item $\Omega^*$ has locally finite perimeter in $D$ and
			\begin{eqnarray}
				\mathcal{H}^{d-1}((\partial\Omega^*\backslash\partial^*\Omega^*)\cap D)=0,
			\end{eqnarray}
			where $\mathcal{H}^{d-1}$ is the Hausdorff measure of dimension $d - 1$. 
		\end{enumerate}
	\end{thm}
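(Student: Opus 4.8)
The plan is to follow the Alt--Caffarelli free boundary scheme (\cite{Caffarelli1}) as adapted to eigenvalue functionals by Brian\c{c}on and collaborators (\cite{Briancon,Briancon3,Briancon189}), and to reduce both assertions to a few geometric facts about $u:=u_{\Omega^*}$ and the measure $\mu_1:=\Delta_p u+\Delta u+\lambda_1|u|^{q-2}u$. The preliminaries come from the previous section: by Corollary \ref{cor5.1} the set $\Omega^*=\{u>0\}$ is open and $u$ is locally Lipschitz on $D$, and $u=u_{\Omega^*}$ solves the functional problem (\ref{p222}) (Proposition \ref{prop3.4}), hence minimizes the penalized functional $J_\mu$, so that $J_\mu(u)\le J_\mu(v)$ for all $v\in W^{1,p}_0(D)$ (Proposition \ref{propvp}); by Proposition \ref{Lem5.3} the measure $\mu_1\ge0$ is Radon, $u\in L^\infty(D)$, and $|\Delta_p u+\Delta u|(B(x_0,r))\le Cr^{d-1}$ whenever $B(x_0,2r)\subset D$, whence also $\mu_1(B(x_0,r))\le Cr^{d-1}$ after adding the bounded term $\lambda_1|u|^{q-2}u$; and, since $-\Delta_p u-\Delta u=\lambda_1|u|^{q-2}u$ on $\Omega^*$ (Remark \ref{rem3.3}) while $u\equiv0$ on $D\setminus\overline{\Omega^*}$, the measure $\mu_1$ is carried by $\partial\Omega^*\cap D$.

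The heart of the proof is a \emph{non-degeneracy} estimate: there exist $c_0,r_0>0$ such that $\sup_{B(x_0,r)}u\ge c_0r$ for every $x_0\in\overline{\Omega^*}\cap D$ and $0<r<r_0$ with $B(x_0,2r)\subset D$. I would establish this by contradiction. Using that for every competitor $v$ with $\Omega_v\subset\Omega_u$ the volume penalty in $J_\mu$ is inactive, $J_\mu(u)\le J_\mu(v)$ reduces to $\tfrac1p\int_D|\nabla u|^p+\tfrac12\int_D|\nabla u|^2-\tfrac{\lambda_c}{q}\int_D|u|^q\le\tfrac1p\int_D|\nabla v|^p+\tfrac12\int_D|\nabla v|^2-\tfrac{\lambda_c}{q}\int_D|v|^q$. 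If $u$ were smaller than $\varepsilon r$ on $B(x_0,r)$, I would compare $u$ with $v=\max(u-\varepsilon r\,\psi,0)$ for a standard cut-off $\psi$ supported in $B(x_0,r)$ with $\psi\equiv1$ on $B(x_0,r/2)$; since $p>q\ge2$ and $u\in W^{1,\infty}_{\mathrm{loc}}$, this comparison gives $\int_{B(x_0,r/2)}(|\nabla u|^p+|\nabla u|^2)\le C\varepsilon r^d$, and a De Giorgi--type iteration (or a blow-up argument, as in \cite{Briancon3}) then forces $u\equiv0$ in a neighbourhood of $x_0$, contradicting $x_0\in\overline{\Omega^*}$. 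I expect this to be the main obstacle: transporting the Alt--Caffarelli non-degeneracy to the degenerate mixed operator $\Delta_p+\Delta$ requires the competitor estimates and the iteration to absorb the $p$-Laplacian term, and the quantitative interior positivity of $u$ uses the Harnack inequality and gradient estimates for solutions of $-\Delta_pw-\Delta w=g$.

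From non-degeneracy I would derive, first, the two-sided density estimate for the optimal set: there is $c_1\in(0,1)$ with $c_1\le|\Omega^*\cap B(x_0,r)|/|B(x_0,r)|\le1-c_1$ for all $x_0\in\partial\Omega^*\cap D$ and $r$ small. The lower bound is immediate from non-degeneracy and the Lipschitz bound (a point with $u\ge c_0r/2$ carries, by Lipschitz continuity, a full ball of radius $\sim r$ on which $u>0$); the upper bound uses that $\Omega^*$ is also a shape subsolution of the penalized problem (the remark after Proposition \ref{propvp}), so replacing $u$ on $B(x_0,r)$ by its harmonic extension cannot decrease $J_\mu$, which bounds $|B(x_0,r)\setminus\Omega^*|$ below, as in \cite{Briancon189}. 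Second, comparing $u$ with that harmonic replacement and invoking the mean-value formula of Lemma \ref{Lem5.1} applied to $|u|$ — whose distributional Laplacian is a measure with $|\Delta u|\le\mu_1+C\mathcal{L}^d$ by Lemma \ref{remlem} — together with $u(x_0)=0$, one obtains, after an iteration on all small scales as in \cite{Briancon3,Briancon189}, the matching lower density bound $\mu_1(B(x_0,r))\ge c\,r^{d-1}$.

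With these facts the two assertions are geometric measure theory. The bounds $c\,r^{d-1}\le\mu_1(B(x_0,r))\le C\,r^{d-1}$ on $\partial\Omega^*\cap D$ and the comparison between a Radon measure and Hausdorff measure (see \cite{Evans}) give $c\,\mathcal{H}^{d-1}\lfloor(\partial\Omega^*\cap D)\le\mu_1\le C\,\mathcal{H}^{d-1}\lfloor(\partial\Omega^*\cap D)$; in particular $\mathcal{H}^{d-1}\lfloor(\partial\Omega^*\cap D)$ is Radon, $\mu_1\ll\mathcal{H}^{d-1}\lfloor\partial\Omega^*$, and the Radon--Nikodym theorem yields a density $f$ with $0<c\le f\le C$ such that $\mu_1=f\,\mathcal{H}^{d-1}\lfloor\partial\Omega^*$, which is assertion (1). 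Since $\mathcal{H}^{d-1}(\partial\Omega^*\cap K)\le C\mu_1(K)<\infty$ for every compact $K\subset D$, Federer's criterion shows $\Omega^*$ has locally finite perimeter in $D$, so the structure theorem for $BV_{\mathrm{loc}}$ functions applies; and the two-sided density estimate for $\Omega^*$ places every point of $\partial\Omega^*\cap D$ in the measure-theoretic boundary $\partial^m\Omega^*$, whence Federer's theorem $\mathcal{H}^{d-1}(\partial^m\Omega^*\setminus\partial^*\Omega^*)=0$ gives $\mathcal{H}^{d-1}((\partial\Omega^*\setminus\partial^*\Omega^*)\cap D)=0$, which is assertion (2). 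Finally, since $D$ is connected, Remark \ref{rem3.6} ensures $\Omega^*=[u_{\Omega^*}>0]$ with $|\Omega^*|=c$, so the free-boundary formulation used throughout genuinely describes the solutions of (\ref{glm}).
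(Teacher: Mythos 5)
Your architecture is the full Alt--Caffarelli scheme (non-degeneracy of $u$ at the free boundary, hence two-sided density bounds for $\Omega^*$ and a lower bound $\mu_1(B(x_0,r))\ge c\,r^{d-1}$, hence the representation of $\mu_1$ and Federer's criterion). That is a genuinely different route from the paper's, and in principle a stronger one, but as written it has a real gap: the two pillars on which everything rests --- the non-degeneracy estimate $\sup_{B(x_0,r)}u\ge c_0r$ and the lower density bound for $\mu_1$ --- are only announced, not proved. You yourself flag them as ``the main obstacle'', and the sketch leaves precisely the hard points open: the competitor estimate for $v=\max(u-\varepsilon r\psi,0)$ has to control the $p$-energy and the volume term of the mixed functional simultaneously, the De Giorgi-type iteration must be run for the degenerate operator $\Delta_p+\Delta$, and the Harnack inequality and harmonic-replacement comparison you invoke for $-\Delta_pw-\Delta w=g$ are nowhere established in this setting. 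Since your proofs of the finite perimeter, of the upper density bound, and of the representation $\mu_1=f\,\mathcal{H}^{d-1}\lfloor\partial\Omega^*$ with $f\ge c>0$ all feed off these two lemmas, neither assertion of the theorem is actually reached.

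The paper circumvents both pillars. Locally finite perimeter is obtained from the Euler--Lagrange equation with a strictly positive Lagrange multiplier $\Lambda$ (Lemma \ref{Lema2.1}) combined with the weak boundary identity of Lemma \ref{Lemn2.5}, which yield directly $\Lambda\,P(\Omega_u,B(x,r))\le C\|\nabla u\|_{L^{\infty}}\,\mu_1(B(x,r))\le C_1r^{d-1}$ (Theorem \ref{theon2.4}); no lower bound on $\mu_1$ and no non-degeneracy are used. The density statements of Proposition \ref{pronm} come from the support of the Gauss--Green measure, Lemma \ref{lema2.5} and the openness of $\Omega_u$ (Corollary \ref{cor5.1}), and assertion (1) uses only the upper bound of Proposition \ref{propn2.3} together with the Radon--Nikodym theorem (Corollary \ref{cor6.1}). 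Your closing geometric-measure-theory steps (covering argument for $\mu_1\ll\mathcal{H}^{d-1}$, Federer's criterion via the essential boundary) are fine and in fact more carefully articulated than the paper's, but to make the whole argument stand you would have to actually prove non-degeneracy for $\Delta_p+\Delta$; otherwise the shorter path through the Euler--Lagrange multiplier is the one to take.
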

	
	\begin{Lem}\label{theon2.2}
		\begin{equation*}
			\mu_1=\Delta_p u_{\Omega^*} +\Delta u_{\Omega^*}+ \lambda_{c}|u_{\Omega^*}|^{q-2}u_{\Omega^*} 
		\end{equation*}
		is a positive Radon measure.
	\end{Lem}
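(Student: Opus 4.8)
The plan is to show that the distribution $\mu_1 := \Delta_p u_{\Omega^*} + \Delta u_{\Omega^*} + \lambda_c |u_{\Omega^*}|^{q-2}u_{\Omega^*}$ is nonnegative and has locally finite total mass, so that by the Riesz representation theorem it is a positive Radon measure. First I would recall that, since $\Omega^*$ is a solution of (\ref{glm}) and hence of (\ref{p22}), Proposition \ref{prop3.4} tells us that $u := u_{\Omega^*}$ is a solution of the functional minimization problem (\ref{p222}). This places us exactly in the setting of Proposition \ref{Lem5.3}, which asserts precisely that
\begin{equation*}
	\Delta_p u + \Delta u + \lambda_c |u|^{q-2}u \geq 0 \quad \text{in } \mathcal{D}'(D),
\end{equation*}
that $u \in L^\infty(D)$, and that $u$ is continuous on $D$. (Note $\lambda_c = \lambda_1(\Omega^*) = \lambda_1$ here by Remark \ref{rem3.6}, since $D$ is connected.) So the nonnegativity of the distribution $\mu_1$ is already available.

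Next I would upgrade "nonnegative distribution" to "positive Radon measure". A standard fact is that any distribution $T$ on an open set $D$ satisfying $\langle T, \varphi\rangle \geq 0$ for all $\varphi \in C_0^\infty(D)$ with $\varphi \geq 0$ is automatically given by a (unique) positive Radon measure on $D$; this follows from the Riesz–Markov representation theorem once one checks local boundedness of $T$ on test functions, which nonnegativity supplies via domination by $\|\varphi\|_\infty$ times the mass of a fixed cutoff. Concretely, for $K \subset\subset D$ pick $\psi \in C_0^\infty(D)$ with $0 \le \psi \le 1$ and $\psi \equiv 1$ on $K$; then for $\varphi \in C_0^\infty(D)$ supported in $K$ we have $-\|\varphi\|_\infty \psi \le \varphi \le \|\varphi\|_\infty \psi$, whence $|\langle \mu_1, \varphi\rangle| \le \|\varphi\|_\infty \langle \mu_1, \psi\rangle$, so $\mu_1$ extends to a positive linear functional on $C_0(D)$ and is represented by a positive Radon measure. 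Combining with Theorem \ref{thm3.16} / Proposition \ref{Lem5.3}, the estimate (\ref{lar151}), namely $|\Delta_p u + \Delta u|(B(x_0,r)) \le C r^{d-1}$, together with $u \in L^\infty$ gives in addition that $\mu_1$ is finite on balls compactly contained in $D$, confirming it is genuinely Radon (locally finite).

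The main obstacle — and really the only substantive point — is the nonnegativity $\Delta_p u + \Delta u + \lambda_c |u|^{q-2}u \ge 0$, but this is exactly the content of Proposition \ref{Lem5.3}, which we are entitled to invoke; its proof uses the penalized formulation (Proposition \ref{propvp}) and the truncation family $p_n$ in Lemma \ref{pron2.1}, passing to the limit using $p_n'(u)|\nabla u|^p \ge 0$ and $p_n'(u)|\nabla u|^2 \ge 0$. The remaining steps are the soft functional-analytic passage from nonnegative distribution to positive Radon measure described above. Thus the lemma follows directly by assembling Proposition \ref{prop3.4}, Proposition \ref{Lem5.3}, and the Riesz representation theorem.
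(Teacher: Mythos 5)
Your proposal is correct and follows essentially the same route as the paper, which simply cites Proposition \ref{Lem5.3} for the nonnegativity of $\mu_1$ and then concludes it is a positive Radon measure; your explicit Riesz--Markov argument merely fills in the standard step the paper leaves implicit.
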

	\begin{proof}[Proof of Lemma \ref{theon2.2}]
		In lemma \ref{Lem5.3}, we have shown that $\mu_1\geq 0$. It is therefore a Radon measure
	\end{proof}
	We now show that $\mu_1$ is absolutely continuous with respect to the Hausdorff mesure $\mathcal{H}^{d-1}$ in $D$. More precisely we have the following
	\begin{prop}\label{propn2.3}
		Let $u=u_{\Omega^*}$ be a solution of (\ref{p222}).There exists $C>0$ such that, for every ball $B(x,r)\subset D, r\leq 1,$ we have 
		\begin{equation*}
			\mu_1(B(x,r))\leq C r^{d-1}.
		\end{equation*}	
	\end{prop}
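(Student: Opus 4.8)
The plan is to deduce the bound from two ingredients already in place: the density estimate for the mixed operator applied to $u$, and the $L^\infty$-bound on $u$. Recall from Lemma~\ref{theon2.2} that $\mu_1$ is a positive Radon measure, and that, after replacing $u$ by $|u|$, we may assume $u\ge 0$, so that $\mu_1=(\Delta_p u+\Delta u)+\lambda_c u^{q-1}$ in the sense of distributions. Fix a ball with $B(x,2r)\subset D$ and $r\le 1$; I will bound $\mu_1(B(x,r))$ by treating the principal part and the zeroth-order part separately.

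For the principal part I would simply invoke estimate (\ref{lar151}) of Proposition~\ref{Lem5.3}: since $B(x,2r)\subset D$ and $u=|u|$, we get $|\Delta_p u+\Delta u|\big(B(x,r)\big)\le C r^{d-1}$. If one prefers a self-contained derivation, take the cut-off $\varphi$ of (\ref{eq3.16}) supported in $B(x,2r)$, with $\varphi\equiv 1$ on $B(x,r)$ and $\|\nabla\varphi\|_{L^\infty}\le C/r$; then, using $\mu_1\ge 0$ and $\varphi\ge 0$,
\[
\mu_1(B(x,r))\le\int_D\varphi\,d\mu_1=-\int_D\big(|\nabla u|^{p-2}+1\big)\nabla u\cdot\nabla\varphi+\lambda_c\int_D u^{q-1}\varphi .
\]
By Theorem~\ref{thm3.16}, $u$ is locally Lipschitz on $D$, so $|\nabla u|\le L$ on a neighbourhood of $\overline{B(x,2r)}$, whence $\big|\int_D(|\nabla u|^{p-2}+1)\nabla u\cdot\nabla\varphi\big|\le (L^{p-1}+L)\,\|\nabla\varphi\|_{L^\infty}\,|B(x,2r)|\le C r^{d-1}$. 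For the zeroth-order part, $u\in L^\infty(D)$ by Proposition~\ref{Lem5.3}, hence
\[
\lambda_c\int_{B(x,r)}u^{q-1}\,dx\le \lambda_c\,\|u\|_{L^\infty(D)}^{q-1}\,|B(x,r)|\le C\,r^{d}\le C\,r^{d-1},
\]
the last step because $r\le1$. Adding the two estimates gives $\mu_1(B(x,r))\le C r^{d-1}$ with $C$ depending only on $d$, $p$, $q$, $\lambda_c$, $\|u\|_{L^\infty(D)}$ and the Lipschitz constant of $u$.

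The only delicate point is the uniformity of $C$: the local Lipschitz constant of $u$ (equivalently the constant in (\ref{lar151})) may degenerate as $B(x,r)$ approaches $\partial D$, so the argument genuinely produces $C=C(\delta)$ for balls contained in $D_\delta=\{x\in D:\ d(x,\partial D)>\delta\}$ — which is exactly what is needed for the conclusion that $\Omega^*$ has locally finite perimeter in $D$ — while the fully uniform version holds, for instance, whenever $u$ is globally Lipschitz (cf.\ Proposition~\ref{prop11}). Beyond this bookkeeping I expect no real obstacle: the estimate is essentially the observation in the Remark following Proposition~\ref{prop11}, now combined with the trivial volume bound for the lower-order term.
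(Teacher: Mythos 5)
Your proposal is correct and follows essentially the same route as the paper: both bound $\mu_1(B(x,r))$ by pairing the measure with a test object, control the principal part via the local Lipschitz bound on $u$ (giving the surface-type factor $r^{d-1}$) and the zeroth-order part via $\|u\|_{L^\infty}$ (giving $r^{d}\le r^{d-1}$). The only difference is mechanical — the paper integrates $\operatorname{div}\bigl(p_n(u)(|\nabla u|^{p-2}+1)\nabla u\bigr)+\lambda_c|u|^{q-1}p_n(u)$ over $B(x,r)$ and uses the Gauss formula before passing to the limit in $n$, whereas you test against a cut-off on $B(x,2r)$ — and your closing remark about the constant degenerating near $\partial D$ is a fair observation that applies equally to the paper's own argument.
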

	\begin{proof}[Proof of Proposition \ref{propn2.3} ]
		We use the same notations as in the proof of Theorem\ref{theon2.2}.Let $u=u_{\Omega^*}\in W^{1,\infty}_{loc} (D)$ be a solution of (\ref{p222}). Let $B(x,r)\subset D$, then
		\begin{eqnarray*}
			& &\int_{B(x,r)}div\left( |\nabla u_{\Omega^*}|^{p-2}p_n(u_{\Omega^*})\nabla u_{\Omega^*}   +p_n(u_{\Omega^*})\nabla u\right) +\lambda_c |u|^{q-1}p_n(u)\\
			&=&\int_{\partial B(x,r)}|\nabla u_{\Omega^*}|^{p-2}p_n(u_{\Omega^*})\nabla u_{\Omega^*}.n+\int_{\partial B(x,r)}p_n(u_{\Omega^*})\nabla u_{\Omega^*}.n+ \lambda_c\int_{B(x,r)}|u|^{q-1}p_n(u)\\
			&\leq & \|\nabla u_{\Omega^*}\|^{p-1}_{L^{\infty}(B(x,r))} d\omega_dr^{d-1}+\|\nabla u_{\Omega^*}\|_{L^{\infty}(B(x,r))} d\omega_dr^{d-1}+\|u_{\Omega^*}\|_{L^{\infty}(B(x,r))}\omega_d r^d\\
			&\leq& C r^{d-1},
		\end{eqnarray*}	
		for $r\leq 1.$ Because $div\left( |\nabla u_{\Omega^*}|^{p-2}p_n(u_{\Omega^*})\nabla u_{\Omega^*}   +p_n(u_{\Omega^*})\nabla u\right) +\lambda_c |u|^{q-1}p_n(u)$ converges weakly in the sense of Radon measures to $\Delta_p u_{\Omega^*} +\Delta u_{\Omega^*}+ \lambda_{c}|u_{\Omega^*}|^{q-2}u_{\Omega^*}$ we deduce:
		\begin{equation*}
			\mu_1(B(x,r))\leq \lim\limits_{n\rightarrow \infty}\inf\int_{B(x,r)}div\left( |\nabla u_{\Omega^*}|^{p-2}p_n(u_{\Omega^*})\nabla u_{\Omega^*}   +p_n(u_{\Omega^*})\nabla u\right) +\lambda_c |u|^{q-1}p_n(u)\leq C r^{d-1}.
		\end{equation*}
	\end{proof}	
\begin{thm}[Lebesgue decomposition]\cite{Cedric}\label{rod2}
	Let $(X, \mathcal{T}, \nu)$ be a $\sigma-$finite measured space, and $\mu$ a $\sigma-$finite signed measure on $(X, \mathcal{T})$. Then there is a measurable function $f: X\rightarrow \mathbb{R}$ and a measure signed by $\mu_s$ such that
	    \begin{equation*}
\mu= f\nu+\mu_s, \ \mu_s\perp\nu.
	    \end{equation*}
	
	 The $\mu_s$ measure is unique, and the $f$ function is unique to within a modification on a set of $\nu-$zero measures; in particular, the $f\nu$ measure is unique.
\end{thm}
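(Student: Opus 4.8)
The plan is to reduce to the case of a finite measure $\nu$ and a finite \emph{positive} measure $\mu$, and then extract the density $f$ via a Radon--Nikodym-type argument using the Hilbert-space structure of $L^2(\nu+\mu)$. First I would carry out the reductions. By $\sigma$-finiteness, write $X = \bigsqcup_n X_n$ with $\nu(X_n) < \infty$ and (refining the partition) $|\mu|(X_n) < \infty$; it suffices to prove the statement on each $X_n$ and then sum, since the resulting densities and singular parts glue together measurably. Next, by the Jordan decomposition $\mu = \mu^+ - \mu^-$ it suffices to treat a finite positive measure $\mu$; the signed case follows by taking differences of the densities and of the singular parts. So from now on assume $\nu, \mu$ are finite positive measures on $(X,\mathcal{T})$.

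The key step is the von Neumann argument. Set $\rho = \nu + \mu$, a finite positive measure, and consider the bounded linear functional $L(g) = \int_X g \, d\mu$ on the Hilbert space $L^2(\rho)$; boundedness follows from $|L(g)| \le \int |g|\, d\mu \le \int |g|\, d\rho \le \rho(X)^{1/2}\|g\|_{L^2(\rho)}$ by Cauchy--Schwarz. By the Riesz representation theorem there is $h \in L^2(\rho)$ with $\int g\, d\mu = \int g h \, d\rho$ for all $g \in L^2(\rho)$. Testing with $g = \chi_A$ for measurable $A$ gives $\mu(A) = \int_A h\, d\rho$, and since $\mu$ and $\rho$ are positive with $\mu \le \rho$ one checks $0 \le h \le 1$ $\rho$-a.e.\ (use $g = \chi_{\{h<0\}}$ and $g = \chi_{\{h>1\}}$). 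Rearranging $\int g\, d\mu = \int gh\,d\nu + \int gh\,d\mu$ yields $\int g(1-h)\,d\mu = \int gh\,d\nu$ for all $g \in L^2(\rho)$, hence, by monotone convergence, for all nonnegative measurable $g$.

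Now split $X = N \sqcup P$ where $N = \{h = 1\}$ and $P = \{h < 1\}$. Define $\mu_s(A) = \mu(A \cap N)$ and, for the absolutely continuous part, take $g = \chi_{A\cap P}\cdot \tfrac{1}{1-h}$ (finite on $P$) in the identity to get $\mu(A \cap P) = \int_{A\cap P} \tfrac{h}{1-h}\,d\nu$; setting $f = \tfrac{h}{1-h}\chi_P$ (a nonnegative, hence real-valued $\nu$-a.e., measurable function) gives $\mu(A\cap P) = \int_A f\,d\nu$. Thus $\mu = f\nu + \mu_s$. To see $\mu_s \perp \nu$: taking $g = \chi_N$ in $\int g(1-h)\,d\mu = \int gh\,d\nu$ gives $0 = \int_N h\,d\nu = \nu(N)$, so $\nu(N)=0$ while $\mu_s$ is concentrated on $N$. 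Finally, for uniqueness, if $f\nu + \mu_s = f'\nu + \mu_s'$ with $\mu_s, \mu_s' \perp \nu$, then $\mu_s - \mu_s' = (f'-f)\nu$ is simultaneously singular and absolutely continuous with respect to $\nu$, hence zero; this forces $\mu_s = \mu_s'$ and $\int_A(f-f')\,d\nu = 0$ for all $A$, so $f = f'$ $\nu$-a.e.

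The main obstacle is bookkeeping rather than conceptual: one must be careful that all the reductions ($\sigma$-finite $\to$ finite, signed $\to$ positive) are compatible — in particular that gluing countably many densities and mutually singular parts across the partition $\{X_n\}$ again produces a measurable density and a measure singular with respect to $\nu$ — and that the test functions $g = \chi_{A\cap P}/(1-h)$ used in the final identity genuinely lie in the class for which the identity has been established (they are nonnegative and measurable, so the monotone-convergence extension covers them, even though they need not be in $L^2(\rho)$). Once those points are handled, the von Neumann/Riesz step does all the real work.
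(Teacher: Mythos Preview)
Your argument is the classical von Neumann proof of the Lebesgue--Radon--Nikodym theorem and is correct as written; the reductions to finite positive measures are standard, the Riesz representation step is carried out properly, the verification that $0\le h\le 1$ $\rho$-a.e.\ is right, the split $N=\{h=1\}$, $P=\{h<1\}$ with $f=\tfrac{h}{1-h}\chi_P$ gives the decomposition, and the uniqueness argument is clean. The only point worth tightening is purely expository: when you invoke $g=\chi_{A\cap P}/(1-h)$ you should make explicit that you approximate by $g_n=\chi_{A\cap P}\sum_{k=0}^{n}h^k$ (bounded, hence in $L^2(\rho)$) and pass to the limit via monotone convergence, rather than appealing to an identity for ``all nonnegative measurable $g$'' that you have not yet justified in full generality.

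As for comparison with the paper: the paper does not give a proof of this statement at all --- it simply quotes the theorem from \cite{Cedric} (Villani's lecture notes) and moves on to use it. So there is no ``paper's own proof'' to compare against; your proposal supplies what the paper omits. For context, the reference \cite{Cedric} likely presents either the same von Neumann/Riesz approach you chose or the more classical supremum-of-subdensities construction; both are standard routes, and yours is the shorter and more elegant of the two.
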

The measure $\mu_s$ appearing in Theorem \ref{rod2} is called the singular part of $\mu$ with respect to $\nu$. The function $f$ appearing in this same statement is called the Radon-Nikodym density of $\mu$ with respect to $\nu$, and is denoted by
\begin{equation*}
	f=\frac{d \mu}{ d\nu}
\end{equation*}
this function is well defined to within a $\nu-$negligible  set.
	\begin{Cor}[Radon Nikodym's Theorem]\cite{Cedric}\label{cor6.1}
		Let $(X, \mathcal{T}, \nu)$ be a $\sigma-$finite measured space, and $\mu$ a $\sigma-$finite signed measure. Then the following two statements are equivalent:	
		\begin{enumerate}
			\item [(i)] $\mu$ does not load any $\nu-$negligible set; i.e.
			\begin{equation}\label{eqr78}
				\forall N \in \mathcal{T}, \ \left[ \nu(N)=0\Longrightarrow \mu(N)=0\right]  .
			\end{equation}
			\item [(ii)] $\mu$ is $\nu-$measurable; i.e.
			\begin{equation}
				\exists f \in L(X,\nu);\ \mu=f\nu.
			\end{equation}
		\end{enumerate}
	\end{Cor}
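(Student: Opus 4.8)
The plan is to read off the equivalence directly from the Lebesgue decomposition (Theorem~\ref{rod2}), so that no new analytic ingredient is required; the whole content of the corollary is that, under hypothesis $(i)$, the singular part of that decomposition is forced to vanish.

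I would first dispose of the implication $(ii)\Rightarrow(i)$, which is immediate: if $\mu=f\nu$ with $f\in L(X,\nu)$, then for every $N\in\mathcal{T}$ with $\nu(N)=0$ one has $\mu(N)=\int_N f\,d\nu=0$, since integration against $\nu$ ignores $\nu$-null sets; this is exactly (\ref{eqr78}).

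For the reverse implication $(i)\Rightarrow(ii)$, the idea is the following. Apply Theorem~\ref{rod2} to the $\sigma$-finite data $(X,\mathcal{T},\nu)$ and the $\sigma$-finite signed measure $\mu$ to obtain $\mu=f\nu+\mu_s$ with $f\in L(X,\nu)$ and $\mu_s\perp\nu$. From $\mu_s\perp\nu$ there is a set $A\in\mathcal{T}$ with $\nu(A)=0$ on which $|\mu_s|$ is concentrated, i.e. $|\mu_s|(X\setminus A)=0$. Then for an arbitrary $N\in\mathcal{T}$ one has $\mu_s(N)=\mu_s(N\cap A)$; moreover $N\cap A\subset A$ gives $\nu(N\cap A)=0$, hence both $\int_{N\cap A}f\,d\nu=0$ and, by hypothesis $(i)$, $\mu(N\cap A)=0$. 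Evaluating the decomposition on $N\cap A$ yields $\mu_s(N\cap A)=\mu(N\cap A)-\int_{N\cap A}f\,d\nu=0$, so $\mu_s(N)=0$; as $N$ was arbitrary, $\mu_s\equiv0$ and $\mu=f\nu$, which is $(ii)$.

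The single point that deserves attention, and the main (mild) obstacle, is the signed-measure bookkeeping: one must interpret both (\ref{eqr78}) and the relation $\mu_s\perp\nu$ through the Jordan decomposition, i.e. work with the total variation $|\mu_s|$ when extracting the carrier set $A$, so that the phrase ``$\mu_s$ is concentrated on $A$'' is meaningful and the subtraction above is legitimate. The uniqueness of $f$ up to a $\nu$-negligible set is inherited verbatim from the uniqueness clause of Theorem~\ref{rod2} and plays no role in the equivalence. I do not anticipate any genuine difficulty beyond this.
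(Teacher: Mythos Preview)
Your proposal is correct and follows essentially the same route as the paper: apply the Lebesgue decomposition (Theorem~\ref{rod2}) and show that hypothesis $(i)$ forces the singular part $\mu_s$ to vanish by evaluating on subsets of its $\nu$-null carrier. The only cosmetic difference is that the paper first reduces to the unsigned case via the Hahn decomposition, whereas you handle the signed case directly through $|\mu_s|$; you also spell out the trivial implication $(ii)\Rightarrow(i)$, which the paper omits.
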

	\begin{proof}[Proof of Corollaire \ref{cor6.1} ]
		By Hahn's Decomposition Theorem, there exist disjoint measurable parts $S_+$ and $S_-$ such that $\mu_+$ and $\mu_-$ are concentrated on $S_+$ and $S_-$ respectively. If $N$ is a $ \nu-$negligible set, so is $S_+\cap N$ and $S_-\cap N$, and so the assumption implies  $\mu(S_+\cap N)=\mu(S_-\cap N)=0$; so it suffices to show the corollary in the case where $\mu$ is an unsigned measure, which we will assume.\\
		Now let $N$ be a $\nu-$negligible set on which $\mu_s$ is concentrated. For any $A\subset N$ we have $\nu(A)=0$, hence $\mu(A)=0$ by hypothesis, and of course $f\nu(A)=0$; we deduce $\mu_s(A)=0$; $\mu_s$ is thus the null measure, and the conclusion follows. 	
	\end{proof}
	\begin{rem}
		A very often used terminology is to say that $\mu$ verifying (\ref{eqr78}) is absolutely continuous with respect to $\nu$.
	\end{rem}
	
	\begin{proof}[Proof of the first point of the Theorem \ref{theo1.2}]
		
		In proposition \ref{propn2.3}, we have shown that $\mu_1$ is absolutely continuous with respect to the Hausdorff mesure $\mathcal{H}^{d-1}$ in $D$. So we have
		\begin{eqnarray*}
			\Delta_pu_{\Omega^*}+\Delta u_{\Omega^*}+\lambda_1|u_{\Omega^*}|^{q-2}u_{\Omega^*}=f\mathcal{H}^{d-1}\lfloor\partial\Omega^*. 
		\end{eqnarray*}
		where $f=\frac{d \mu_1}{d\mathcal{H}^{d-1}}$.
	\end{proof}
	For $w\in W^{1,p}_0(D)$, we denote 
	\begin{equation*}
		J(w)=\frac{1}{p}\int_{D}|\nabla w|^{p}+ \frac{1}{2}\int_{D}| \nabla w|^{2} - \frac{\lambda_c}{q}\int_{D}|w|^{q}.	
	\end{equation*}
	It is easy to see that the optimization problem (\ref{p222}) is equivalent to the following optimization problem:
	\begin{equation}\label{equation 6}
		J(u)\leq J(w) \ \mbox{for all}\ w\in W^{1,p}_0(D)\ \mbox{with}\ |\Omega_{w}|\leq c
	\end{equation}
	In the following we will only consider solutions of (\ref{equation 6}).
	The Euler-Lagrange equation of the minimization problem is given by the following Lemma.
	\begin{Lem}[Euler–Lagrange equation]\label{Lema2.1}
		Let $u$ be a solution of (\ref{equation 6}). Then there exists $\Lambda>0$ such that, for all $\Phi \in C^{\infty}_0(D,\mathbb{R}^d)$
		\begin{eqnarray}\label{euler1}
		& &	\int_{D}|\nabla u|^{p-2}(D\Phi\nabla u,\nabla u)+\int_{D}(D\Phi\nabla u,\nabla u)-\frac{1}{p}\int_{D}|\nabla u|^{p}\nabla.\Phi- \frac{1}{2}\int_{D}| \nabla u|^{2}\nabla.\Phi\nonumber\\
		& &+\frac{\lambda_c}{q}\int_{D}|u|^{q}\nabla.\Phi=\Lambda\int_{\Omega_u}\nabla.\Phi
		\end{eqnarray}
	\end{Lem}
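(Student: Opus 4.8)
The statement is the Euler--Lagrange equation of $J$ under the volume constraint, written in terms of \emph{inner} (i.e.\ domain) variations, and the natural route is to deform the optimal set along the flow of $\Phi$. The plan is: (i) for $\Phi\in C^\infty_0(D,\mathbb{R}^d)$ and $|t|$ small, set $\Phi_t:=\mathrm{Id}+t\Phi$, a diffeomorphism of $\mathbb{R}^d$ equal to the identity outside a compact subset of $D$, so that $\Phi_t(D)=D$, $u_t:=u\circ\Phi_t^{-1}\in W^{1,p}_0(D)$ and $\Omega_{u_t}=\Phi_t(\Omega_u)$; (ii) expand $J(u_t)$ and $|\Omega_{u_t}|$ to first order in $t$; (iii) combine this with the penalized variational inequality of Proposition \ref{propvp} to extract the multiplier $\Lambda$.

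For step (ii): performing the change of variables $y=\Phi_t(x)$ and using that $D\Phi_t=I+tD\Phi$ satisfies $\det D\Phi_t=1+t\,\nabla\!\cdot\!\Phi+o(t)$ and $(D\Phi_t)^{-1}=I-tD\Phi+o(t)$ uniformly in $x$, dominated convergence (the integrands being controlled by $C|\nabla u|^{p}+C|\nabla u|^{2}+C|u|^q\in L^1(D)$) gives, as $t\to0$,
\begin{equation*}
\int_D|\nabla u_t|^{r}=\int_D|\nabla u|^{r}+t\int_D\!\Big(|\nabla u|^{r}\,\nabla\!\cdot\!\Phi-r\,|\nabla u|^{r-2}(D\Phi\nabla u,\nabla u)\Big)+o(t)\qquad(r=2,p),
\end{equation*}
together with $\int_D|u_t|^q=\int_D|u|^q+t\int_D|u|^q\,\nabla\!\cdot\!\Phi+o(t)$ and $|\Omega_{u_t}|=|\Omega_u|+t\,V(\Phi)+o(t)$, where $V(\Phi):=\int_{\Omega_u}\nabla\!\cdot\!\Phi$. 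Hence $t\mapsto J(u_t)$ is differentiable at $0$ with
\begin{equation*}
\mathcal E(\Phi):=\tfrac{d}{dt}\big|_{t=0}J(u_t)=\tfrac1p\!\int_D\!|\nabla u|^{p}\nabla\!\cdot\!\Phi-\!\int_D\!|\nabla u|^{p-2}(D\Phi\nabla u,\nabla u)+\tfrac12\!\int_D\!|\nabla u|^{2}\nabla\!\cdot\!\Phi-\!\int_D\!(D\Phi\nabla u,\nabla u)-\tfrac{\lambda_c}{q}\!\int_D\!|u|^{q}\nabla\!\cdot\!\Phi,
\end{equation*}
and both $\Phi\mapsto\mathcal E(\Phi)$ and $\Phi\mapsto V(\Phi)$ are linear.

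For step (iii): Proposition \ref{propvp} — after rewriting (\ref{p225}) and using $\tfrac1p\int_D|\nabla u|^p+\tfrac12\int_D|\nabla u|^2=\tfrac{\lambda_c}{q}\int_D|u|^q$, so that $J(u)=0$ — shows $J(u)\le J(v)+\mu[|\Omega_v|-c]^+$ for all $v\in W^{1,p}_0(D)$. I would plug in $v=u_t$ and treat the generic situation $|\Omega_u|=c$ (which holds whenever $D$ is connected, by Remark \ref{rem3.6} and Corollaire \ref{cor5.1}; if $|\Omega_u|<c$ the constraint is inactive, so $\mathcal E(\Phi)=0$ for all $\Phi$ and the lemma holds with $\Lambda=0$). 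Then $[|\Omega_{u_t}|-c]^+=[tV(\Phi)+o(t)]^+$, and dividing the inequality $0\le J(u_t)-J(u)+\mu[|\Omega_{u_t}|-c]^+$ by $t$ for $t>0$ and by $-t$ for $t<0$, then letting $t\to0$, yields
\begin{equation*}
-\mu\,V(\Phi)^{+}\ \le\ \mathcal E(\Phi)\ \le\ \mu\,V(\Phi)^{-}\qquad\text{for every }\Phi\in C^\infty_0(D,\mathbb{R}^d).
\end{equation*}
In particular $V(\Phi)=0$ forces $\mathcal E(\Phi)=0$, so the linear functional $\mathcal E$ vanishes on $\ker V$; moreover $V\not\equiv0$ since $0<c=|\Omega_u|<|D|$ (otherwise $\nabla\chi_{\Omega_u}=0$ in $\mathcal D'(D)$, impossible). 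Choosing $\Phi_0$ with $V(\Phi_0)\neq0$ and setting $\Lambda:=-\mathcal E(\Phi_0)/V(\Phi_0)$, every $\Phi$ decomposes as $\big(\Phi-\tfrac{V(\Phi)}{V(\Phi_0)}\Phi_0\big)+\tfrac{V(\Phi)}{V(\Phi_0)}\Phi_0$ with the first summand in $\ker V\subset\ker\mathcal E$, whence $\mathcal E(\Phi)=-\Lambda V(\Phi)$; substituting the explicit expression of $\mathcal E(\Phi)$ and multiplying by $-1$ is exactly (\ref{euler1}).

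Finally, the two-sided bound tested on fields with $V(\Phi)>0$ already gives $0\le\Lambda\le\mu$. The genuinely delicate point — the only one not reducible to the computations above — is the strict inequality $\Lambda>0$. It expresses that the volume constraint is binding: since $\Omega\mapsto\lambda_1(\Omega)$ is strictly decreasing with respect to inclusion (strong maximum principle, cf.\ Remark \ref{rem3.6}), relaxing the constraint strictly lowers the optimal value, so $\Lambda$ — which up to a positive factor is the sensitivity of the optimum with respect to $c$ — cannot vanish; I would make this precise by comparing $u$ with the first eigenfunction of a slightly enlarged admissible competitor, along the lines of \cite{Briancon}.
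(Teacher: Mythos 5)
Your first-order expansion of $J$ and of the volume along the inner variation $u_t=u\circ(\mathrm{Id}+t\Phi)^{-1}$ is correct and is, up to the sign convention $t\mapsto -t$, exactly the computation the paper performs with $v_t(x)=v(x+t\Phi)$; the linear-algebra extraction of the multiplier ($\mathcal E$ vanishes on $\ker V$, hence $\mathcal E=-\Lambda V$) is also the paper's argument, phrased there as testing (\ref{equation 10}) on $\Phi+\eta\Phi_1$ and on $\Phi-\Phi_1\int_{\Omega_u}\nabla\cdot\Phi$. Routing through the penalized inequality of Proposition \ref{propvp} to get the two-sided bound $-\mu V(\Phi)^+\le\mathcal E(\Phi)\le\mu V(\Phi)^-$ is a harmless variant of the paper's one-sided argument (which perturbs only with $t>0$ and $V(\Phi)>0$ so that $|\Omega_{u_t}|<c$ keeps $u_t$ admissible), and it buys you the extra information $\Lambda\le\mu$ for free. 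Note also that your existence of $\Phi_0$ with $V(\Phi_0)\neq 0$ silently uses that $D$ is connected (otherwise $\chi_{\Omega_u}$ need only be locally constant), which is the hypothesis the paper invokes at exactly this point.

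The genuine gap is the strict inequality $\Lambda>0$, which is part of the statement and which you explicitly do not prove: you obtain only $0\le\Lambda\le\mu$, offer a heuristic about the constraint being binding, and defer to \cite{Briancon}. Your aside that ``if $|\Omega_u|<c$ the lemma holds with $\Lambda=0$'' in fact contradicts the statement as written. The paper closes this by contradiction: if $\Lambda=0$, then (\ref{euler1}) becomes the identity (\ref{euler2}); localizing on a ball $B(x,r)\subset D$ and using $u\in W^{1,\infty}_{loc}(D)$ to discard the divergence terms, one is left with $\int_{B(x,r)}(|\nabla u|^{p-2}+1)(D\Phi\nabla u,\nabla u)=0$ for all $\Phi\in C^\infty_0(B(x,r),\mathbb{R}^d)$, which forces $\nabla u=0$ a.e.\ on $D$ and hence $u\equiv 0$ (since $u\in W^{1,p}_0(D)$), contradicting that $u$ is a nontrivial eigenfunction. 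Whatever one thinks of the details of that localization step, some argument of this kind is required for the lemma to be proved in full, and your proposal as it stands leaves precisely this point open.
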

	\begin{proof}[Proof of the Lemma \ref{Lema2.1} ]
		We start by a general remark that will be useful in the rest of the paper. If $v\in W^{1,p}_0(D)$ and if $\Phi \in C^{\infty}_0(D,\mathbb{R}^d)$ 
		we define $v_t(x) = v(x +t\Phi)$; therefore, for $t$ small enough, $v_t\in  W^{1,p}_0(D).$ A simple calculus gives (when t goes to 0),
		\begin{equation*}
			|\Omega_{v_t}|=|\Omega_v|- t\int_{\Omega_v}\nabla.\Phi +o(t),
		\end{equation*}
	\begin{eqnarray*}
			& & J(v_t)= J(v) \\
			& & +t\left( 	\int_{D}|\nabla v|^{p-2}(D\Phi\nabla v,\nabla v)+\int_{D}(D\Phi\nabla v,\nabla v)-\frac{1}{p}\int_{D}|\nabla v|^{p}\nabla.\Phi- \frac{1}{2}\int_{D}| \nabla v|^{2}\nabla.\Phi 	+\frac{\lambda_c}{q}\int_{D}|v|^{q}\nabla.\Phi\right) \\
			& &+o(t).
	\end{eqnarray*}
	
		Now we apply this with $v=u$ and $\Phi$ such that $\int_{\Omega_u}\nabla.\Phi>0.$  Such a $\Phi$ exists, otherwise we would get, using that $D$
		is connected, $\Omega_u = D$ or $\emptyset$ a.e. We have $|\Omega_{u_t}| < |\Omega_u|$ for $t > 0$ small enough and, by minimality,
		\begin{eqnarray*}
			J(u)&\leq& J(u_t)\nonumber\\
			&=&J(u) + t\left( 	\int_{D}|\nabla u|^{p-2}(D\Phi\nabla u,\nabla u)+\int_{D}(D\Phi\nabla u,\nabla u)-\frac{1}{p}\int_{D}|\nabla u|^{p}\nabla.\Phi- \frac{1}{2}\int_{D}| \nabla u|^{2}\nabla.\Phi \right)\\ 
			& &+ t\left(- \frac{1}{2}\int_{D}| \nabla u|^{2}\nabla.\Phi+ \frac{\lambda_c}{q}\int_{D}|u|^{q}\nabla.\Phi\right) +o(t),
		\end{eqnarray*}
		and so,
		\begin{equation}\label{equation 10}
			\int_{D}|\nabla u|^{p-2}(D\Phi\nabla u,\nabla u)+\int_{D}(D\Phi\nabla u,\nabla u)-\frac{1}{p}\int_{D}|\nabla u|^{p}\nabla.\Phi- \frac{1}{2}\int_{D}| \nabla u|^{2}\nabla.\Phi + \frac{\lambda_c}{q}\int_{D}|u|^{q}\nabla.\Phi\geq0.
		\end{equation}
		Now, we take $\Phi$ with $\int_{\Omega_u}\nabla.\Phi=0.$ Let $\Phi_1$ such that $\int_{\Omega_u}\nabla.\Phi_1=1.$ Writing (\ref{equation 10}) with $\Phi+\eta\Phi_1$ and letting $\eta$ goes to $0$, we get (\ref{equation 10}) with this $\Phi$ and, using $-\Phi$, we get (\ref{equation 10}) with an equality instead of the inequality. For a general $\Phi$, we use this equality with $\Phi-\Phi_1(\int_{\Omega_u}\nabla.\Phi)$(we have
		$\int_{\Omega_u}\nabla.(\Phi-\Phi_1(\int_{\Omega_u}\nabla.\Phi))=0$) and we get the result with 
		\begin{equation*}
			\Lambda=\int_{D}|\nabla u|^{p-2}(D\Phi_1\nabla u,\nabla u)+\int_{D}(D\Phi_1\nabla u,\nabla u)-\frac{1}{p}\int_{D}|\nabla u|^{p}\nabla.\Phi_1- \frac{1}{2}\int_{D}| \nabla u|^{2}\nabla.\Phi_1 + \frac{\lambda_c}{q}\int_{D}|u|^{q}\nabla.\Phi_1\geq0
		\end{equation*}
		using (\ref{equation 10}).\\
		It remains to show that $\Lambda\neq0.$ For that let us reason by the absurd by supposing that $\Lambda=0.$ By replacing $\Lambda=0$ in (\ref{euler1}), we have for all $\Phi \in C^{\infty}_0(D,\mathbb{R}^d)$
		\begin{equation}\label{euler2}
			\int_{D}\left( |\nabla u|^{p-2}+1\right)  (D\Phi\nabla u,\nabla u)=\frac{1}{p}\int_{D}|\nabla u|^{p}\nabla.\Phi+ \frac{1}{2}\int_{D}| \nabla u|^{2}\nabla.\Phi - \frac{\lambda_c}{q}\int_{D}|u|^{q}\nabla.\Phi
		\end{equation}
		Let $x\in D$ and $r>0$ such that $B(x,r)\subset D$, let $\Phi \in C^{\infty}_0(B(x,r),\mathbb{R}^d)$, $u$ being in $W^{1,\infty}_{loc}(D)$ we have
		\begin{equation*}
			\frac{1}{p}\int_{B(x,r)}|\nabla u|^{p}\nabla.\Phi+ \frac{1}{2}\int_{B(x,r)}| \nabla u|^{2}\nabla.\Phi - \frac{\lambda_c}{q}\int_{B(x,r)}|u|^{q}\nabla.\Phi=0.
		\end{equation*}	
		Thus we have:
		
		\begin{equation*}
			\int_{B(x,r)}\left( |\nabla u|^{p-2}+1\right)  (D\Phi\nabla u,\nabla u)=0 \ \forall \Phi \in C^{\infty}_0(B(x,r),\mathbb{R}^d),
		\end{equation*}
		which implies 
		\begin{equation*}
			\int_{B(x,r)} (D\Phi\nabla u,\nabla u)=0 \ \forall \Phi \in C^{\infty}_0(B(x,r),\mathbb{R}^d).
		\end{equation*}
		So we have $\nabla u=0$ on $B(x,r)$ which implies that $\nabla u=0$ a.e on $D$. Indeed if this were not the case, there would exist $E\subset D$, $|E|\neq 0$ such that $\nabla u\neq0$ on $E.$ It is thus clear to see that $E\subset \Omega_u$. Let $x\in E$ and $r>0$ as small as it may be, we have $B(x,r)\cap E\subset B(x,r)\cap \Omega_u $ and $|B(x,r)\cap \Omega_u|\neq 0,$ and $\nabla u=0$ on $B(x,r)\cap \Omega_u$ so $\nabla u=0$ on $B(x,r)\cap E.$ Thus $\nabla u=0$ on $B(x,r)\cap E$,$  \forall x\in E$, which contradicts the hypothesis. Therefore $\nabla u=0$ a.e on $D$ and as  $u\in W^{1,p}_0(D)$ then $u=0$ a.e on $D$, which contradicts the fact that $u$ is a non-zero eigenfunction. In sum $\Lambda> 0.$ 
	\end{proof}
	In the rest of this paper, we will be mainly interested in the solutions of (\ref{equation 6} ) .
	We will now see that $\Omega^*=\Omega_{u}$ has locally finite perimeter in $D$.
	\begin{thm}\label{theon2.4}
		Let $u$ be a solution (\ref{equation 6}). Then $\Omega_u=\{u>0\}$ has locally finite perimeter in $D$. Moreover, there exist constants $C, C_1, r_0$ depending on the data such that for every $B(x,r)\subset D$ with $r\leq r_0,$
		
		\begin{equation*}
			P(\Omega_u,B(x,r))\leq C \mu_1(B(x,r))\leq C_1r^{d-1}
		\end{equation*}
		where $P(\Omega_u,B(x,r))$ is the relative perimeter of $\Omega_u$  with respect to a set $B(x,r)$.
	\end{thm}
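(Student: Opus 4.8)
The plan is to reduce the statement to a density estimate for the transition layer $\{0<u<\varepsilon\}$ of the state function $u=u_{\Omega^*}$ (which we may take $\geq 0$), and then to deduce that estimate from the non‑negativity of $\mu_1$ (Lemma~\ref{theon2.2}), the bound of Proposition~\ref{propn2.3}, and the local Lipschitz continuity of $u$ (Theorem~\ref{thm3.16}). Recall that $\mu_1=\Delta_p u+\Delta u+\lambda_c|u|^{q-2}u$ is concentrated on $\{u=0\}$, because $-\Delta_p u-\Delta u=\lambda_c|u|^{q-2}u$ on the open set $\Omega_u=\{u>0\}$ and $u\equiv0$ on the interior of $\{u=0\}$.

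Fix $x,r$ with $\overline{B(x,2r)}\subset D$ and let $L$ be a Lipschitz constant for $u$ on $\overline{B(x,2r)}$. By the co‑area formula $\int_{\{0<u<\varepsilon\}\cap B(x,r)}|\nabla u|\,dx=\int_0^\varepsilon P(\{u>s\},B(x,r))\,ds$, while $\chi_{\{u>s\}}\to\chi_{\Omega_u}$ in $L^1(B(x,r))$ as $s\to0^+$ and lower semicontinuity of the perimeter give $P(\Omega_u,B(x,r))\le\liminf_{s\to0^+}P(\{u>s\},B(x,r))$; averaging the co‑area identity over $s\in(0,\varepsilon)$ and combining the two,
\[
P(\Omega_u,B(x,r))\;\le\;L\,\limsup_{\varepsilon\to0^+}\frac{\bigl|\{0<u<\varepsilon\}\cap B(x,r)\bigr|}{\varepsilon}.
\]
Hence it suffices to bound $\bigl|\{0<u<\varepsilon\}\cap B(x,r)\bigr|$ by $C\varepsilon r^{d-1}$ (which, together with Proposition~\ref{propn2.3} and a covering/good‑radius argument, already gives that $\Omega_u$ has locally finite perimeter in $D$ and $P(\Omega_u,B(x,r))\le C_1 r^{d-1}$).

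For the layer estimate, I would first test the weak identity $-\int_D(|\nabla u|^{p-2}+1)\nabla u\cdot\nabla\psi\,dx+\lambda_c\int_D|u|^{q-2}u\,\psi\,dx=\int_D\psi\,d\mu_1$ with $\psi=(\varepsilon-u)^+\eta$, where $\eta\in C^\infty_0(B(x,2r))$ is a cutoff equal to $1$ on $B(x,r)$ with $|\nabla\eta|\le C/r$. Using $\nabla((\varepsilon-u)^+)=-\nabla u\,\chi_{\{0<u<\varepsilon\}}$ a.e.\ and $(\varepsilon-u)^+\equiv\varepsilon$ on $\operatorname{supp}\mu_1$, this yields the Caccioppoli‑type inequality
\[
\int_{\{0<u<\varepsilon\}\cap B(x,r)}\bigl(|\nabla u|^{p-2}+1\bigr)|\nabla u|^2\,dx\;\le\;C\,\varepsilon\,r^{d-1}+\varepsilon\,\mu_1\bigl(\overline{B(x,2r)}\bigr),
\]
which controls $\varepsilon^{-1}\!\int_{\{0<u<\varepsilon\}\cap B}|\nabla u|^2$ uniformly in $\varepsilon$ but, because $\nabla u$ may degenerate, not yet the Lebesgue measure of the layer. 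To complete this step I would invoke non‑degeneracy of $u$ at $\partial\Omega_u$: there exist $c_0,r_0>0$ such that for every $x_0\in\partial\Omega_u$ with $B(x_0,r_0)\subset D$ and every $\rho\le r_0$, either $u\equiv0$ on $B(x_0,\rho/2)$ or $\sup_{B(x_0,\rho)}u\ge c_0\rho$. Together with the interior Harnack inequality on $\Omega_u$ this upgrades to linear growth from the free boundary, $u(y)\ge c\,\operatorname{dist}(y,\{u=0\})$ near $\partial\Omega_u$, and hence to two‑sided density estimates for $\Omega_u$; a standard covering argument then gives $|\{0<u<\varepsilon\}\cap B(x,r)|\le C\varepsilon r^{d-1}$. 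For the sharper bound $P(\Omega_u,B(x,r))\le C\mu_1(B(x,r))$, once finite perimeter is available I would show that the Radon–Nikodym density $f$ in $\mu_1=f\,\mathcal{H}^{d-1}\lfloor\partial\Omega^*$ (Theorem~\ref{theo1.2}(1)) is bounded below $\mathcal{H}^{d-1}$‑a.e.\ on the reduced boundary $\partial^*\Omega^*$ — again a consequence of non‑degeneracy, via a lower bound on the normal flux of $(|\nabla u|^{p-2}+1)\nabla u$ across $\partial^*\Omega^*$ — so that $\mu_1(B(x,r))\ge c_0\,\mathcal{H}^{d-1}(\partial^*\Omega^*\cap B(x,r))=c_0\,P(\Omega_u,B(x,r))$. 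In the degenerate case $\mu_1(B(x,r))=0$ the weak equation reduces to $-\Delta_p u-\Delta u=\lambda_c|u|^{q-2}u\ge0$ on the connected ball $B(x,r)$, so the strong maximum principle (cf.\ Remark~\ref{rem3.6}) forces $\Omega_u\cap B(x,r)$ to be $\emptyset$ or all of $B(x,r)$ and the inequality is trivial.

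The main obstacle is exactly this non‑degeneracy package (linear growth, density, lower flux bound) at the free boundary: the Caccioppoli inequality alone does not see the Lebesgue measure of $\{0<u<\varepsilon\}$, so a genuinely independent comparison argument is required, and for the mixed operator $\Delta_p+\Delta$ the usual Alt–Caffarelli‑type barrier must be redesigned because the $p$‑homogeneous and the $2$‑homogeneous parts dominate on different scales. The expected route is: at a boundary point where $u$ would decay faster than linearly, compare $u$ on a small ball with a radial barrier assembled from the fundamental‑solution profiles of $\Delta_p$ and $\Delta$ and conclude, using $-\Delta_p u-\Delta u\le\lambda_c\|u\|_{L^\infty}^{q-1}$, that $u$ vanishes in a neighbourhood — contradicting the choice of a free‑boundary point. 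Once this dichotomy is established, the remaining chain (co‑area reduction, Caccioppoli inequality, density estimate, Proposition~\ref{propn2.3}, covering argument) is routine.
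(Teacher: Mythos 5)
Your overall strategy (reduce to a measure estimate on the layer $\{0<u<\varepsilon\}$, then appeal to Alt--Caffarelli type non-degeneracy and density estimates at the free boundary) is coherent, but it contains a genuine gap that you yourself flag: the non-degeneracy package --- linear growth of $u$ away from $\{u=0\}$, the two-sided density estimates, and the lower bound on the flux giving $f\geq c_0$ on $\partial^*\Omega^*$ --- is nowhere established, neither in your argument nor anywhere in the paper, and it is precisely the load-bearing step. Without it your Caccioppoli inequality only controls $\varepsilon^{-1}\int_{\{0<u<\varepsilon\}\cap B}(|\nabla u|^{p-2}+1)|\nabla u|^2$, which, as you note, says nothing about $|\{0<u<\varepsilon\}\cap B|$ where $\nabla u$ degenerates; and your route to $P(\Omega_u,B)\leq C\mu_1(B)$ collapses for the same reason. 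Designing the barrier for the inhomogeneous operator $\Delta_p+\Delta$ is not a routine adaptation, so as written the proposal is a program rather than a proof.

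The paper takes a different route that avoids non-degeneracy entirely. It first derives the Euler--Lagrange equation for inner domain variations $v_t(x)=v(x+t\Phi)$ (Lemma \ref{Lema2.1}), producing a Lagrange multiplier $\Lambda$ for the volume constraint and, crucially, proving $\Lambda>0$. Lemma \ref{Lemn2.5} then tests this identity with $\Phi=\varphi\,\Psi_\varepsilon(u)$ and shows, in an averaged sense, that $\Lambda=\frac{p-1}{p}|\nabla u|^p+\frac12|\nabla u|^2$ on $\partial\{u>0\}$:
\begin{equation*}
\lim_{\varepsilon\to0}\frac{1}{\varepsilon}\int_{\{0<u<\varepsilon\}}\langle\varphi.\nabla u\rangle\Bigl(\tfrac{1-p}{p}|\nabla u|^p-\tfrac12|\nabla u|^2+\Lambda\Bigr)=0 .
\end{equation*}
Combining this with the co-area/Gauss computation $\varepsilon^{-1}\int_{\{0<u<\varepsilon\}}\langle\varphi.\nabla u\rangle\to\int_{\Omega_u}\operatorname{div}\varphi$ and with the weak-$*$ convergence $n\bigl(|\nabla u|^p+|\nabla u|^2\bigr)\chi_{\{0<u<1/n\}}\rightharpoonup\mu_1$ (already obtained in the proof of Proposition \ref{Lem5.3}), one gets $\Lambda\int_{\Omega_u\cap B}\operatorname{div}\varphi\leq\frac{p-1}{p}\|\nabla u\|_{L^\infty}\,\mu_1(B(x,r))$, and taking the supremum over $\varphi$ yields $P(\Omega_u,B(x,r))\leq\frac{p-1}{p\Lambda}\|\nabla u\|_{L^\infty}\mu_1(B(x,r))$; Proposition \ref{propn2.3} then gives the $C_1r^{d-1}$ bound. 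In short, the strict positivity of the Lagrange multiplier plays the role that pointwise non-degeneracy of $|\nabla u|$ plays in your scheme, and it is obtained by soft variational arguments rather than barrier constructions. If you want to salvage your approach, you would either have to prove the linear-growth/non-degeneracy lemma for $\Delta_p+\Delta$ from scratch, or switch to the inner-variation identity as the paper does.
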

	The proof of Theorem \ref{theon2.4} will require the following lemma which says in a very weak sense that "$\Lambda=\frac{p-1}{p}|\nabla u|^p+\frac{1}{2}|\nabla u|^2$" on $\partial\{u>0\}$. 
	\begin{Lem}\label{Lemn2.5}
		Let $u$ be a solution (\ref{equation 6}). Then for every $\varphi \in C^{\infty}_0(D, \mathbb{R}^d)$ we have:
		\begin{eqnarray*}
			& &\lim\limits_{\epsilon\rightarrow 0}		\frac{1}{\epsilon}\int_{\{0<u<\epsilon\}}\langle \varphi. \nabla u \rangle\left(\frac{1-p}{p}|\nabla u|^p-\frac{1}{2}|\nabla u|^2
			+\Lambda\right) =0
		\end{eqnarray*}
	\end{Lem}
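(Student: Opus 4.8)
The plan is to test the Euler--Lagrange identity (\ref{euler1}) of Lemma \ref{Lema2.1} against a vector field that "sees" only the thin layer $\{0<u<\varepsilon\}$. Fix $\varphi\in C^\infty_0(D,\mathbb{R}^d)$ and let $\phi_\varepsilon:\mathbb{R}\to\mathbb{R}$ be the Lipschitz cut-off $\phi_\varepsilon(t)=0$ for $t\le 0$, $\phi_\varepsilon(t)=t/\varepsilon$ for $0\le t\le\varepsilon$, and $\phi_\varepsilon(t)=1$ for $t\ge\varepsilon$. Since $u\in W^{1,\infty}_{loc}(D)$ by Theorem \ref{thm3.16}, the field $\Phi_\varepsilon:=\phi_\varepsilon(u)\varphi$ is Lipschitz and compactly supported, and by the chain rule $\nabla(\phi_\varepsilon(u))=\tfrac1\varepsilon\mathbf 1_{\{0<u<\varepsilon\}}\nabla u$ a.e. First I would check that (\ref{euler1}) is valid not only for smooth $\Phi$ but for Lipschitz compactly supported $\Phi$ as well; this follows by mollifying $\Phi_\varepsilon$ and passing to the limit, since every integrand in (\ref{euler1}) depends continuously on $(\Phi,D\Phi)$ in the topology of uniform (resp.\ $L^r$) convergence, thanks to $u,\nabla u\in L^\infty_{loc}(D)$.

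Then I would plug $\Phi=\Phi_\varepsilon$ into (\ref{euler1}) and use the decompositions
\[(D\Phi_\varepsilon\,\nabla u,\nabla u)=\tfrac1\varepsilon\mathbf 1_{\{0<u<\varepsilon\}}|\nabla u|^2(\varphi\cdot\nabla u)+\phi_\varepsilon(u)\,(D\varphi\,\nabla u,\nabla u),\qquad \nabla\cdot\Phi_\varepsilon=\tfrac1\varepsilon\mathbf 1_{\{0<u<\varepsilon\}}(\varphi\cdot\nabla u)+\phi_\varepsilon(u)\,\nabla\cdot\varphi\]
to split the resulting identity as $A_\varepsilon+B_\varepsilon=0$, where $A_\varepsilon$ collects all terms carrying the factor $\phi_\varepsilon(u)$ (including $-\Lambda\int_{\Omega_u}\phi_\varepsilon(u)\nabla\cdot\varphi$) and $B_\varepsilon$ collects all terms carrying the factor $\tfrac1\varepsilon\mathbf 1_{\{0<u<\varepsilon\}}$ (using $\{0<u<\varepsilon\}\subseteq\Omega_u$). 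Grouping the coefficients of $(\varphi\cdot\nabla u)$ in $B_\varepsilon$ — namely $|\nabla u|^p+|\nabla u|^2-\tfrac1p|\nabla u|^p-\tfrac12|\nabla u|^2$ from the four gradient terms, plus $\tfrac{\lambda_c}{q}|u|^q$, minus $\Lambda$ — gives
\[B_\varepsilon=\frac1\varepsilon\int_{\{0<u<\varepsilon\}}(\varphi\cdot\nabla u)\left[\frac{p-1}{p}|\nabla u|^p+\frac12|\nabla u|^2+\frac{\lambda_c}{q}|u|^q-\Lambda\right].\]

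Next I would show $A_\varepsilon\to 0$. As $\varepsilon\to0$ one has $\phi_\varepsilon(u)\to\mathbf 1_{\{u>0\}}$ pointwise with $0\le\phi_\varepsilon\le 1$, so by dominated convergence (all integrands are in $L^1$) $A_\varepsilon$ tends to the left-hand side of (\ref{euler1}) minus $\Lambda\int_{\Omega_u}\nabla\cdot\varphi$, evaluated at the admissible field $\Phi=\varphi$; here one uses that $\nabla u=0$ and $u=0$ a.e.\ on $\{u=0\}$ so the integrals over $\{u>0\}$ coincide with those over $D$, and that $\Omega_u=\{u>0\}$. By (\ref{euler1}) with $\Phi=\varphi$ this limit is $0$, hence $A_\varepsilon\to 0$ and therefore $B_\varepsilon\to 0$. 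Finally the $|u|^q$-contribution to $B_\varepsilon$ is negligible: on $\{0<u<\varepsilon\}$ we have $|u|^q\le\varepsilon^q$, so that term is bounded by $\varepsilon^{q-1}\|\varphi\|_{\infty}\|\nabla u\|_{L^\infty(\mathrm{supp}\,\varphi)}\,|\{0<u<\varepsilon\}\cap\mathrm{supp}\,\varphi|$, which goes to $0$ since $q\ge2$ and $|\{0<u<\varepsilon\}\cap\mathrm{supp}\,\varphi|\to 0$. Subtracting, $\tfrac1\varepsilon\int_{\{0<u<\varepsilon\}}(\varphi\cdot\nabla u)\big[\tfrac{p-1}{p}|\nabla u|^p+\tfrac12|\nabla u|^2-\Lambda\big]\to 0$, which is precisely the negative of the claimed limit, so the claim follows. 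I expect the main obstacle to be the first step — legitimizing the non-smooth test field $\Phi_\varepsilon=\phi_\varepsilon(u)\varphi$ in the Euler--Lagrange identity — together with the bookkeeping that identifies the $\phi_\varepsilon(u)$-part $A_\varepsilon$ with the Euler--Lagrange identity for $\varphi$ itself so that it vanishes in the limit; the remaining algebra and the estimate on the lower-order $|u|^q$ term are routine.
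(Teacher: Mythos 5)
Your proof is correct and follows essentially the same route as the paper: both test the Euler--Lagrange identity (\ref{euler1}) with a vector field of the form (linear cutoff of $u$ at level $\epsilon$) times $\varphi$, extract the $\frac{1}{\epsilon}\int_{\{0<u<\epsilon\}}$ layer terms via the chain rule, and dispose of the bulk terms by dominated convergence and of the $|u|^{q}$ contribution by its smallness on the layer. The only cosmetic difference is that the paper uses the complementary cutoff $\Psi_{\epsilon}(s)=\max(0,1-s/\epsilon)$, which, by linearity of (\ref{euler1}) in $\Phi$, is equivalent to your choice $\phi_{\epsilon}(u)\varphi$ once you subtract the identity for $\Phi=\varphi$ (which is exactly your step $A_{\epsilon}\to 0$).
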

	\begin{proof}[Proof of Lemma \ref{Lemn2.5}]
		Let $\phi \in C^{\infty}_0(D, \mathbb{R}^d)$ and $\Psi_{\epsilon}(s)=\max (0, 1-\frac{s}{\epsilon}).$ We write Euler-Lagrange’s equation (\ref{euler1}) with $\Phi=\varphi\Psi_{\epsilon}(u)\in W^{1,\infty}(D)$	which has compact support. We study each term:
		\begin{eqnarray*}
			\int_{\{u>0\}} div \Phi	&=&\int_{\{u>0\}}\Psi_{\epsilon}(u)div(\varphi)+\int_{\{0<u<\epsilon\}}\langle \varphi.\nabla u\rangle \Psi_{\epsilon}'(u)\\
			&=&\int_{\{u>0\}}\Psi_{\epsilon}(u)div(\varphi)-\frac{1}{\epsilon}\int_{\{0<u<\epsilon\}}\langle \varphi.\nabla u\rangle
		\end{eqnarray*}
		Since $\Psi_{\epsilon}(u)\chi_{\{u>0\}}$ converge to $0$ a.e when $\epsilon$ goes to $0$, by dominated convergence, the first term goes to $0.$ For the same reason we get:
		\begin{equation*}
			\lim\limits_{\epsilon\rightarrow 0}	\int_{\{u>0\}} |u|^{q-1}\langle \Phi. \nabla u \rangle=\lim\limits_{\epsilon\rightarrow 0}\int_{\{u>0\}} |u|^{q-1}\Psi_{\epsilon}(u)\langle \varphi. \nabla u \rangle=0
		\end{equation*}
		\begin{eqnarray*}
			\int_{\{u>0\}} div \Phi |\nabla u|^p&=&\int_{\{u>0\}}\Psi_{\epsilon}(u)div \varphi |\nabla u|^p+ \Psi_{\epsilon}'(u)\langle \varphi. \nabla u \rangle|\nabla u|^p\\
			&=&\int_{\{u>0\}}\Psi_{\epsilon}(u)div \varphi |\nabla u|^p-\frac{1}{\epsilon} \int_{\{0<u<\epsilon\}}\langle \varphi. \nabla u \rangle|\nabla u|^p.
		\end{eqnarray*}
		Also we have 
		\begin{eqnarray*}
			\int_{\{u>0\}} div \Phi |\nabla u|^2&=&\int_{\{u>0\}}\Psi_{\epsilon}(u)div \varphi |\nabla u|^2+ \Psi_{\epsilon}'(u)\langle \varphi. \nabla u \rangle|\nabla u|^2\\
			&=&\int_{\{u>0\}}\Psi_{\epsilon}(u)div \varphi |\nabla u|^p-\frac{1}{\epsilon} \int_{\{0<u<\epsilon\}}\langle \varphi. \nabla u \rangle|\nabla u|^2
		\end{eqnarray*}
		Using $\nabla u \in L^p$, the first terms goes to $0$. Finally we also have,
		\begin{equation*}
			\int_{D} \langle D\Phi\nabla u.\nabla u \rangle=\int_{\{u>0\}}\Psi_{\epsilon}(u)\langle D\varphi\nabla u\nabla u \rangle-\frac{1}{\epsilon}\int_{\{0<u<\epsilon\}}\langle \varphi. \nabla u \rangle|\nabla u|^2
		\end{equation*}
		\begin{equation*}
			\int_{D} |\nabla u|^{p-2}\langle D\Phi\nabla u.\nabla u \rangle=\int_{\{u>0\}}\Psi_{\epsilon}(u)|\nabla u|^{p-2}\langle D\varphi\nabla u\nabla u \rangle-\frac{1}{\epsilon}\int_{\{0<u<\epsilon\}}\langle \varphi. \nabla u \rangle|\nabla u|^p
		\end{equation*}
		and the first the first term goes to $0$. By writing Euler-Lagrange’s (\ref{euler1}) equation and letting " goes to 0, we get:
		\begin{eqnarray*}
			& &\lim\limits_{\epsilon\rightarrow 0}		\frac{1}{\epsilon}\int_{\{0<u<\epsilon\}}\langle \varphi. \nabla u \rangle\left(\frac{1-p}{p}|\nabla u|^p-\frac{1}{2}|\nabla u|^2
			+\Lambda\right) =0
		\end{eqnarray*}
	\end{proof}
	\begin{proof}[Proof of Theorem \ref{theon2.4}]
		Let $B(x,r)\subset D$ and $\varphi\in C^{\infty}_0(B(x,r)).$ For almost every $s > 0$ the boundary of $\{u>s\}$ is regular $(C^1)$, since on the open set $\{u>0\}$ we have $-\Delta_pu-\Delta u=\lambda_c|u|^{q-2}u$ so that $u$ is $C^1$ and we can use Sard’s
		lemma  (see Lemma 13.15 in \cite{Maggi2012}), which implies that $|\nabla u|>0$ on $\{u=s\}$ for almost every $s$. We can now write, using co-area formula (see 3.4.3 in \cite{Evans}), and Gauss formula:
		\begin{eqnarray*}	
			\frac{1}{\epsilon}\int_{\{0<u<\epsilon\}}\langle \varphi. \nabla u \rangle&=&\frac{1}{\epsilon}\int_{0}^{\epsilon}ds\int_{\{u=s\}}\langle \varphi. \frac{\nabla u}{|\nabla u|} \rangle\\
			&=&\frac{1}{\epsilon}\int_{0}^{\epsilon}ds\int_{\{u=s\}}\langle \varphi.\nu_s \rangle\\
			&=& \frac{1}{\epsilon}\int_{0}^{\epsilon}ds\int_{\{u>s\}}div \varphi\\
		\end{eqnarray*}
		(here $\nu_s$ is the outward normal to $\{u>s\}).$ We deduce,
		\begin{equation}\label{eqn18}
			\lim\limits_{\epsilon\rightarrow 0}		\frac{1}{\epsilon}\int_{\{0<u<\epsilon\}}\langle \varphi. \nabla u \rangle =\int_{\{u>0\}}div \varphi.	
		\end{equation}
		There exist $s_0<r$ such that $\varphi\in C^{\infty}_0(B(x,s_0)).$ If we suppose that $\|\varphi\|_{\infty}\leq 1,$ and using the fact that $u\in W^{1,\infty}_{loc} (D)$ then
		\begin{eqnarray}\label{eqn19}
			\frac{1}{\epsilon}\int_{\{0<u<\epsilon\}}\langle \varphi. \nabla u \rangle\left(\frac{p-1}{p}|\nabla u|^p+\frac{1}{2}|\nabla u|^2\right) \leq \frac{p-1}{p\epsilon}\|\nabla u\|_{L^{\infty}(B(x,s_0))}\int_{\{0<u<\epsilon\}\cap B(x,s_0)}\left(|\nabla u|^p+|\nabla u|^2\right) \nonumber\\
		\end{eqnarray}
		for every $s_0<s<r.$ But we saw in the proof of Proposition \ref{Lem5.3} (see (\ref{eqn17})) that
		\begin{equation}
			\lim\limits_{n\rightarrow \infty}n\left(|\nabla u|^p+|\nabla u|^2\right)\chi_{\{0<u<\frac{1}{n}\}}=\mu_1,
		\end{equation}
		weakly in the sense of Radon’s measure ( see 1.9 in \cite{Evans}). For almost every $s < r,$
		\begin{equation*}
			\mu_1 (\partial B(x,s) )=0.
		\end{equation*}
		Let take a $ s > s_0$, we get:
		\begin{eqnarray}
			\lim\limits_{n\rightarrow \infty}n\int_{\{0<u<\frac{1}{n}\}\cap B(x,s)}\left(|\nabla u|^p+|\nabla u|^2\right)= \mu_1(B(x,s))
		\end{eqnarray}
		From Lemma \ref{Lemn2.5}, (\ref{eqn18}) and (\ref{eqn19}) with $\epsilon=\frac{1}{n}$ and $n\rightarrow\infty$
		\begin{eqnarray*}
			\Lambda\int_{\Omega_u\cap B(x,r)} div \varphi&\leq&	\frac{p-1}{p}\|\nabla u\|_{L^{\infty}(B(x,r))} \mu_1(B(x,s))\\
			&\leq&	\frac{p-1}{p}\|\nabla u\|_{L^{\infty}(B(x,r))} \mu_1(B(x,r))\\
		\end{eqnarray*}
		That is, by taking the supremum over $\varphi$:
		\begin{equation*}
			P(\Omega_u, B(x,r))\leq\frac{p-1}{p\Lambda}\|\nabla u\|_{L^{\infty}(B(x,r))} \mu_1(B(x,r))
		\end{equation*}
		To prove that $\Omega_u$ has locally finite perimeter in $D$, we use that $\mu_1$ is finite on the bounded set $D$. Using Proposition \ref{propn2.3}, we get for $B(x,r)\subset D$ and $r$ small enough:
		\begin{equation*}
			P(\Omega_u, B(x,r))\leq C r^{d-1}.
		\end{equation*}
	\end{proof}
	\begin{prop}\label{pronm}
		Let $u$ be a solution (\ref{equation 6}). Let $x\in\partial\Omega_u.$ Then there exist $C_1$, $C_2$ and  $r_0$ such that, for every $B(x,r)\subset D$ and $r\leq r_0,$ we have
		\begin{equation}\label{reduit1}
			0<C_1\leq\frac{|\Omega_u\cap B(x,r)|}{| B(x,r)|}\leq C_2<1,
		\end{equation}
	and 
	\begin{equation}\label{reduit2}
		0<\frac{|\Omega_u^c\cap B(x,r)|}{| B(x,r)|}<1
	\end{equation}
where $\Omega_u^c=D\backslash \Omega_{u}$	
		Moreover we have:
		\begin{eqnarray*}
			\mathcal{H}^{d-1}((\partial\Omega_u\backslash\partial^*\Omega_u)\cap D)=0.
		\end{eqnarray*}
	\end{prop}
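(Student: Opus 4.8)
The plan is to follow the classical free‑boundary density‑estimate scheme (in the spirit of Alt--Caffarelli and of \cite{Briancon,Briancon3,Henrot2017}). The ingredients are already available: the local Lipschitz regularity of $u$ (Theorem \ref{thm3.16}); the positivity and the $\mathcal{H}^{d-1}$‑control of $\mu_1$ (Lemma \ref{theon2.2} and Proposition \ref{propn2.3}); the local finiteness of the perimeter of $\Omega_u$ together with $P(\Omega_u,B(x,r))\leq C_1 r^{d-1}$ (Theorem \ref{theon2.4}); and the penalized formulation: since $D$ is connected we have $|\Omega_u|=c$ and $J(u)=0$, so Proposition \ref{propvp} shows that $u$ minimizes $v\mapsto J(v)+\mu[|\Omega_v|-c]^+$ over $W^{1,p}_0(D)$. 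All constants below depend only on the data, and it is enough to argue on compact subsets of $D$, which suffices for the final Hausdorff estimate.

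\textbf{Lower bound in (\ref{reduit1}).} I would first prove a non‑degeneracy estimate: there exist $c_0,r_1>0$ with $\sup_{B(x,r)}u\geq c_0 r$ for all $x\in\partial\Omega_u$ and $r<r_1$. Argue by contradiction: if $\sup_{B(x,r)}u<c_0 r$, test minimality with the admissible competitor $v=u(1-\eta)$, where $\eta$ is a cut‑off with $\eta\equiv 1$ on $B(x,r/2)$ and supported in $B(x,r)$; since $\Omega_v\subset\Omega_u$ there is no penalty, so $0=J(u)\leq J(v)$. Estimating the gradient terms by the Lipschitz bound and $\int u^q$ by $\|u\|_{L^\infty(B(x,r))}^q$ gives $\int_{B(x,r/2)}(|\nabla u|^p+|\nabla u|^2)\leq Cc_0 r^d$; running this energy decay along a chain of shrinking balls, together with the equation $-\Delta_p u-\Delta u=\lambda_c u^{q-1}$ on $\{u>0\}$ and a De~Giorgi--type iteration, forces $u\equiv 0$ on $B(x,r/2)$, which is impossible since $x\in\partial\Omega_u$. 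Granting non‑degeneracy, pick $y\in\overline{B(x,r)}$ with $u(y)\geq c_0 r$; since $c_0\leq L$ (the local Lipschitz constant), $u>c_0 r/2>0$ on $B(y,c_0 r/(2L))\subset B(x,2r)$, so $|\Omega_u\cap B(x,2r)|\geq\omega_d(c_0/(2L))^d r^d$, and replacing $2r$ by $r$ yields the constant $C_1$.

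\textbf{Upper bound in (\ref{reduit1}), and (\ref{reduit2}).} For the upper bound I would use an enlargement competitor. For $x\in\partial\Omega_u$ and small $r$, let $h$ solve $-\Delta_p h-\Delta h=\lambda_c u^{q-1}$ in $B(x,r)$ with $h=u$ on $\partial B(x,r)$. Since $-\Delta_p u-\Delta u\leq\lambda_c u^{q-1}$ (because $\mu_1\geq 0$, Proposition \ref{Lem5.3}), testing the difference with $(u-h)^+$ and using monotonicity of $\xi\mapsto|\xi|^{p-2}\xi$ gives $h\geq u$; moreover, for small $r$ the datum $u|_{\partial B(x,r)}$ is not identically $0$ (otherwise $u\equiv 0$ on $B(x,r)$, contradicting $x\in\partial\Omega_u$), so the strong maximum principle for $\Delta_p+\Delta$ gives $h>0$ in $B(x,r)$. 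Then $v:=\max(u,h)$ equals $h$ on $B(x,r)$ and $u$ outside, so it is admissible with $\Omega_v=\Omega_u\cup B(x,r)$ and $[|\Omega_v|-c]^+=|B(x,r)\setminus\Omega_u|$. Plugging $v$ into the penalized minimality of $u$ and using $J(u)=0$ gives $J(u)-J(v)\leq\mu|B(x,r)\setminus\Omega_u|$, while the strong convexity of $w\mapsto\frac1p|\nabla w|^p+\frac12|\nabla w|^2$ combined with the Euler--Lagrange equation for $h$ bounds $J(u)-J(v)$ below by $c_p\int_{B(x,r)}|\nabla(u-h)|^p+c\int_{B(x,r)}|\nabla(u-h)|^2$ up to lower‑order terms absorbed via Poincaré. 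Since $u-h$ vanishes on $\partial B(x,r)$ and equals $h>0$ on $B(x,r)\setminus\Omega_u$, using non‑degeneracy to keep $h\geq cr$ on a fixed proportion of $B(x,r)$ yields $|B(x,r)\setminus\Omega_u|\geq Cr^d$, i.e. the constant $C_2<1$. Inequality (\ref{reduit2}) is then immediate: $(1-C_2)|B(x,r)|\leq|\Omega_u^c\cap B(x,r)|\leq(1-C_1)|B(x,r)|$. Finally, (\ref{reduit1}) says exactly that no point of $\partial\Omega_u\cap D$ is a Lebesgue‑density point of $\Omega_u$ of density $0$ or $1$, hence $\partial\Omega_u\cap D$ coincides with the essential boundary $\partial^e\Omega_u$ there; since $\Omega_u$ has locally finite perimeter in $D$ (Theorem \ref{theon2.4}), Federer's structure theorem (see \cite{Evans,Maggi2012}) gives $\mathcal{H}^{d-1}((\partial^e\Omega_u\setminus\partial^*\Omega_u)\cap D)=0$, and therefore $\mathcal{H}^{d-1}((\partial\Omega_u\setminus\partial^*\Omega_u)\cap D)=0$.

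\textbf{Main obstacle.} The delicate point is the two‑sided density estimate with uniform constants, and above all the non‑degeneracy of the lower‑bound step: in contrast with the enlargement competitor, shrinking $\Omega_u$ produces no volume excess, so the penalization gives no direct gain, and the quantitative linear lower bound must be squeezed out of the Lipschitz bound and the equation on $\{u>0\}$ alone, which is where the argument is technically heaviest. The mixed nature of the operator is a further complication: $\Delta_p$ and $\Delta$ have different homogeneities (and $p>q\geq 2$), so in every competitor computation one must track the two homogeneities term by term and check that at small scales the linear‑growth mechanism --- governed here by the Laplacian part --- is not destroyed.
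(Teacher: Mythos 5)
Your plan is the classical Alt--Caffarelli two-sided density scheme (non-degeneracy plus Lipschitz continuity for the lower bound, a PDE-replacement competitor for the upper bound, then Federer's theorem), and it is sound in outline; but it is genuinely different from, and substantially stronger than, what the paper actually does. The paper's proof is soft: it invokes Proposition 12.19 of Maggi to identify the support of the Gauss--Green measure with the set $\{x:\;0<|\Omega_u\cap B(x,r)|<\omega_d r^d\ \forall r>0\}$, shows via Lemma \ref{lema2.5} that $|\Omega_u\cap B(x,r)|=\omega_d r^d$ is impossible at a boundary point and via openness of $\Omega_u$ that $|\Omega_u\cap B(x,r)|>0$, and then cites Section 5.8 of Evans--Gariepy for the $\mathcal{H}^{d-1}$ statement. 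Note that this only yields the strict inequalities $0<|\Omega_u\cap B(x,r)|/|B(x,r)|<1$ for each fixed $x$ and $r$; it does not produce the uniform constants $C_1,C_2$ appearing in (\ref{reduit1}), and those uniform (or at least asymptotic) bounds are exactly what is needed to place $\partial\Omega_u\cap D$ inside the essential boundary before applying Federer's theorem. So your heavier route is the one that would actually deliver the proposition as stated, at the price of the two steps you yourself flag as delicate: the linear non-degeneracy $\sup_{B(x,r)}u\geq c_0r$ (where the penalization gives nothing and everything must come from the equation and inner competitors), and the quantitative lower bound on $J(u)-J(v)$ for the enlargement competitor, both of which you have only sketched and which require genuine care because of the two homogeneities of $\Delta_p+\Delta$. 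The paper's argument buys brevity and avoids non-degeneracy entirely, but at the cost of not really justifying the uniform density constants it claims.
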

	\begin{Lem}\label{lema2.5}
		Let $\omega$ be an open subset of $D$, and let $u$ be a solution of (\ref{equation 6}). If $|\Omega_u\cap \omega|=|\omega|$ then
		\begin{equation*}
			-\Delta_p u-\Delta u=\lambda_c|u|^{q-2}u \ \mbox{in}\ \omega,
		\end{equation*}
		and therefore $\omega\subset \Omega_u.$	
	\end{Lem}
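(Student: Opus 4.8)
The plan is to exploit the hypothesis $|\Omega_u\cap\omega|=|\omega|$ to build admissible competitors of the form $u+t\varphi$ with $\varphi\in C^\infty_0(\omega)$ and $t$ of either sign, to derive the Euler--Lagrange equation on $\omega$ by differentiating $t\mapsto J(u+t\varphi)$ at $t=0$, and then to upgrade the almost everywhere positivity of $u$ on $\omega$ to pointwise positivity via the strong maximum principle.

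First I would normalise $u\geq 0$ (replacing $u$ by $|u|$ changes neither $J$ nor $\Omega_u$) and recall that, by Theorem~\ref{thm3.16}, $u$ is continuous on $D$, so that $\Omega_u=\{u>0\}$ is open. The key elementary observation is that for every $\varphi\in C^\infty_0(\omega)$ and every $t\in\mathbb{R}$,
\[
\Omega_{u+t\varphi}\setminus\Omega_u\subseteq\{x\in\omega:\ u(x)=0\},
\]
because $\varphi$ vanishes outside $\omega$; since $|\{u=0\}\cap\omega|=|\omega|-|\Omega_u\cap\omega|=0$ by hypothesis, we get $|\Omega_{u+t\varphi}|\leq|\Omega_u|\leq c$, so $u+t\varphi$ is admissible in (\ref{equation 6}). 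Consequently $t\mapsto J(u+t\varphi)$ attains a global minimum at $t=0$; this function is differentiable (dominated convergence applied to the $p$-, $2$- and $q$-power terms, using $|D|<\infty$ and $p>q\geq 2$), hence its derivative at $0$ vanishes, i.e.
\[
\int_D|\nabla u|^{p-2}\nabla u\cdot\nabla\varphi\,dx+\int_D\nabla u\cdot\nabla\varphi\,dx-\lambda_c\int_D|u|^{q-2}u\,\varphi\,dx=0 .
\]
Since $\varphi\in C^\infty_0(\omega)$ is arbitrary, this is exactly $-\Delta_p u-\Delta u=\lambda_c|u|^{q-2}u$ in $\mathcal{D}'(\omega)$, which is the first claim.

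For the inclusion $\omega\subseteq\Omega_u$ I would argue on each connected component $\omega_0$ of $\omega$: there $u\geq 0$ is continuous and satisfies $-\Delta_p u-\Delta u=\lambda_c|u|^{q-2}u\geq 0$, so by the strong maximum principle for the mixed operator $-\Delta_p-\Delta$ with nonnegative right-hand side (the same principle invoked in Remark~\ref{rem3.6}), either $u\equiv 0$ on $\omega_0$ or $u>0$ on $\omega_0$. The first alternative is ruled out since $|\{u=0\}\cap\omega_0|=0$ while $|\omega_0|>0$; hence $u>0$ on $\omega_0$, so $\omega_0\subseteq\{u>0\}=\Omega_u$. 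Taking the union over components yields $\omega\subseteq\Omega_u$.

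The delicate point is the admissibility step: it is precisely where the hypothesis $|\Omega_u\cap\omega|=|\omega|$ enters, and it is what makes $t=0$ a genuine critical point (both signs of $t$ allowed) rather than a one-sided minimum giving only a variational inequality. A secondary technical point is the precise form of the strong maximum principle for $-\Delta_p-\Delta$, which I would simply quote from \cite{Rocard1} rather than reprove.
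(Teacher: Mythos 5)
Your proof is correct, but it follows a genuinely different route from the paper's. The paper argues by contradiction at the level of the already-known identity on $\Omega_u$: it notes that $-\Delta_p u-\Delta u=\lambda_c|u|^{q-2}u$ holds on $\Omega_u\cap\omega$, asserts that if the identity failed in $\omega$ it would have to fail on $\omega\setminus\Omega_u$ and that this set would then have positive Lebesgue measure, contradicting $|\Omega_u\cap\omega|=|\omega|$. That inference is delicate (arguably incomplete): a distributional identity can fail precisely on a Lebesgue-null set, since the defect measure $\mu_1=\Delta_p u+\Delta u+\lambda_c|u|^{q-2}u$ is exactly the kind of object that is singular with respect to Lebesgue measure and concentrated on $\partial\Omega_u$; ruling out that it charges $\omega$ is the actual content of the lemma. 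Your argument avoids this entirely: the observation that $\Omega_{u+t\varphi}\setminus\Omega_u\subseteq\{u=0\}\cap\omega$ is Lebesgue-null makes $u+t\varphi$ admissible for \emph{both} signs of $t$, so $t=0$ is an interior critical point of $t\mapsto J(u+t\varphi)$ and the Euler--Lagrange equation in $\mathcal{D}'(\omega)$ follows directly. This is the standard and robust way to prove such statements, and it is consistent with the perturbation computations the paper itself performs elsewhere (Remark \ref{rem3.6}, Lemma \ref{pron2.1}). You also supply a proof of the second assertion $\omega\subset\Omega_u$ via the strong maximum principle on connected components, which the paper's proof omits altogether. The only caveats are minor: you should make sure the strong maximum principle for the mixed operator $-\Delta_p-\Delta$ in the form you need (nonnegative continuous supersolution vanishing at an interior point is identically zero on the component) is indeed available in the cited reference, and note that continuity of $u$ comes from the local Lipschitz regularity of Theorem \ref{thm3.16}, which is already established before this lemma is used, so there is no circularity.
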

	\begin{proof}[Proof of Lemma \ref{lema2.5}]
		Let $\omega$ be an open subset of $D$, and let $u$ be a solution of (\ref{equation 6}).
		Let's reason by the absurd and assume that 	$|\Omega_u\cap \omega|=|\omega|$ and	
		\begin{equation*}
			-\Delta_p u-\Delta u\neq\lambda_c|u|^{q-2}u \ \mbox{in}\ \omega.
		\end{equation*}	
		It is known that 		
		\begin{equation*}
			-\Delta_p u-\Delta u=\lambda_c|u|^{q-2}u \ \mbox{in}\ \Omega_u.
		\end{equation*}	
		so we have 		
		\begin{equation*}
			-\Delta_p u-\Delta u=\lambda_c|u|^{q-2}u \ \mbox{in}\ \Omega_u\cap \omega
		\end{equation*}			
		and 		
		\begin{equation*}
			-\Delta_p u-\Delta u\neq\lambda_c|u|^{q-2}u \ \mbox{in}\ \omega\backslash(\Omega_u\cap \omega).
		\end{equation*}	
		This implies that $|\omega\backslash(\Omega_u\cap \omega)|\neq 0.$
		Furthermore, we $\omega=(\Omega_u\cap \omega)\cup(\omega\backslash(\Omega_u\cap \omega))$ and 
		$|\omega|=|\Omega_u\cap \omega|+ |\omega\backslash(\Omega_u\cap \omega|$ and as $|\Omega_u\cap \omega|=|\omega|$ then  $|\omega\backslash(\Omega_u\cap \omega)|= 0.$			
	\end{proof}
	\begin{proof}[Proof of Proposition \ref{pronm}]
		By theorem \ref{theon2.4}, $\Omega_u$ is locally finite perimeter in $D$. So using proposition 12.19 in \cite{Maggi2012} we have 
		\begin{equation*}
			Spt \mu_{\Omega_u}	=\{x\in \mathbb{R}^d: 0<|\Omega_u\cap B(x,r)|< \omega_dr^d, \forall r>0\}\subset \partial\Omega_u,
		\end{equation*}
		where $\mu_{\Omega_u}$ is the Gauss–Green measure of $\Omega_u$.\\
		Let $x\in \partial\Omega_u$, such that $x\notin Spt \mu_{\Omega_u}.$ This implies $|\Omega_u\cap B(x,r)|\in \left\lbrace 0;\omega_dr^d\right\rbrace $ for $r$ small.
		If $|\Omega_u\cap B(x,r)|=\omega_dr^d=|B(x,r)|$, applying the lemma \ref{lema2.5} to $\omega=B(x,r)$, we would get $\Omega_u\cap B(x,r)=B(x,r)$, which is impossible since $B(x,r)$ is centered on $\partial\Omega_u.$\\
		Since for $D$ connected, $\Omega_u$ is an open set and then $|\Omega_u\cap B(x,r)|>0$. So for all $x\in \partial\Omega_u$ we have (\ref{reduit1}).\\
	
	Furthermore, let $x\in \partial\Omega_u$ since $\Omega_u^c$ is a closed set and $ \Omega_u^c\cap B(x,r)\subset B(x,r) $ then for $r$ small we have 
	\begin{equation*}
		0<\frac{|\Omega_u^c\cap B(x,r)|}{| B(x,r)|}\leq 1.
	\end{equation*}
Now let at assume that $|\Omega_u^c\cap B(x,r)|=	| B(x,r)|$, which would imply that $|\Omega_u\cap B(x,r)|=0$. Contradiction because $\Omega_u$ is an open set.  Thus we have (\ref{reduit2}).\\
	
		This completes the proof of the first part. The second part comes from the theory of sets with finite perimeter (see 5.8
		in \cite{Evans}).
	\end{proof}

\end{document}